\definecolor{newcolorchange}{rgb}{0.5, 0.2, 0.2}
	\definecolor{airforceblue}{rgb}{0.1, 0.1, 0.5}
\newtheorem{thm}{Theorem}[section]
\newtheorem{prop}[thm]{Proposition}
\newtheorem{cor}[thm]{Corollary}
\newtheorem{lem}[thm]{Lemma}
\theoremstyle{definition}
\newtheorem{dfn}[thm]{Definition}
\newtheorem{example}[thm]{Example}
\newtheorem{rmk}[thm]{Remark}
\numberwithin{equation}{section}
\newcommand{\cM}{M}
\newcommand{\bG}{\mathbb{G}}
\newcommand{\id}{\textrm{id}}
\newcommand{\N}{\mathbb{N}}
\newcommand{\C}{\mathbb{C}}
\newcommand{\cL}{\mathcal{L}}
\newcommand{\minotimes}{\otimes_{{\rm min}}}
\newcommand{\barotimes}{\overline{\otimes}}
\newcommand{\Ad}{{\rm Ad}}
\newcommand{\cA}{\mathcal{A}}
\newcommand{\op}{{\rm op}}
\newcommand{\RH}{H_{\mathbb{R}}}
\newcommand{\varphiO}{\varphi_{\Omega}}
\newcommand{\dimq}{{\rm qdim}}
\newcommand{\cS}{\mathcal{S}}
\newcommand{\notimes}{\otimes_\nabla}
\newcommand{\algtimes}{\otimes_{{\rm alg}}}
\newcommand{\Hom}{ {\rm Hom} }
\newcommand{\cF}{\mathcal{F}}
\title[$L_2$-cohomology, derivations and semi-groups on $q$-Gaussian algebras]{$L_2$-cohomology, derivations and  quantum Markov semi-groups on $q$-Gaussian algebras}
\date{\noindent \today.  \\
 }
\author[Martijn Caspers]{Martijn Caspers}
\address{TU Delft, EWI/DIAM,
	P.O.Box 5031,
	2600 GA Delft,
	The Netherlands}
\email{m.p.t.caspers@tudelft.nl}
\author{Yusuke Isono}
\address{Research Institute for Mathematical Sciences, Kyoto University,
	606-8502,
	Kyoto, Japan
}
\email{isono@kurims.kyoto-u.ac.jp}
\author{Mateusz Wasilewski}
\address{KU Leuven,
Celestijnenlaan 200b - bus 2400,
3001 Leuven
}
\email{mateusz.wasilewski@kuleuven.be}
\begin{document}
\begin{abstract}
  	We study (quasi-)cohomological properties through an analysis of quantum Markov semi-groups. We construct higher order Hochschild cocycles using gradient forms associated with a quantum Markov semi-group. By using Schatten-$\cS_p$ estimates we analyze when these cocycles take values in the coarse bimodule. For the 1-cocycles (the derivations) we show that under natural conditions we obtain the Akemann-Ostrand property. We apply this  to $q$-Gaussian algebras $\Gamma_q(\RH)$. As a result $q$-Gaussians satisfy AO$^+$ for $\vert q \vert \leqslant \dim(\RH)^{-1/2}$. This includes a new range of $q$ in low dimensions compared to Shlyakhtenko \cite{Shlyakhtenko}.
  \end{abstract}
\maketitle

  \section{Introduction}\label{Sect=Intro}
  The aim of this paper is to connect the theory of quantum Markov semi-groups to certain cohomological properties of their  algebras. Quantum Markov semi-groups are continuous semi-groups of trace preserving unital completely positive maps on a finite von Neumann algebra. Such quantum Markov semi-groups naturally arise in time evolutions of open systems that undergo decoherence. With the emergence of non-commutative probability, their theory had been investigated thoroughly. We refer in particular to the papers of Goldstein-Lindsay \cite{GoldsteinLindsay} and Cipriani \cite{Cipriani}.

  \vspace{0.3cm}

  A result that is of fundamental importance to us was obtained by Cipriani and Sauvageot in \cite{CiprianiSauvageot}. They showed that any generator $\Delta$ of a quantum Markov semi-group $(e^{-t \Delta})_{t \geq 0}$ admits a closable derivation $\partial$ as its square root, i.e. $\Delta = \partial^\ast \overline{\partial}$. Since derivations are 1-cocycles in Hochschild cohomology, this is the first instance that shows relevance of quantum Markov semi-groups to cohomology.

  The results from \cite{CiprianiSauvageot} had several consequences in non-commutative potential theory and quantum probability. Much more recently also links to approximation and rigidity properties of von Neumann algebras were made. In particular, amenability  \cite{CiprianiSauvageotAdvances} and Haagerup property \cite{CaspersSkalskiCMP} can be characterized in terms of quantum Markov semi-groups with sufficient decay (see also \cite[Appendix]{CaspersGradient}). Further rigidity properties of von Neumann algebras can be obtained \cite{CaspersGradient} by using quantum Markov semi-groups as input for the machinery developed by Ozawa-Popa \cite{OzawaPopaAnnals}, \cite{OzawaPopaAJM} and Peterson \cite{Peterson}. A crucial tool here is the gradient form (or carr\'e du champ) of a quantum Markov semi-group (or classically a diffusion semi-group)

  \vspace{0.3cm}

  We describe these rigidity results a bit further. In the celebrated paper  \cite{Voiculescu} Voiculescu showed, using free entropy, that free group factors do not possess a Cartan subalgebra. Later on, using completely different methods, Ozawa and Popa \cite{OzawaPopaAnnals} re-obtained this result. Ozawa and Popa in fact prove a stronger result. Namely, they show that the normalizer of any amenable diffuse von Neumann subalgebra generates a von Neumann algebra that is amenable again. This property became known as \emph{strong solidity} and plays a central role in the theory. Strong solidity results are usually proved from a combination of weak compactness and a malleable deformation or a group geometric/cohomological property.

In particular, the rigidity results of \cite{OzawaPopaAJM}, \cite{Peterson} show that proper 1-cocycles in group cohomology with values in the coarse bimodule (or any bimodule weakly contained in it)  can be used to show that von Neumann algebras are strongly solid.  After \cite{OzawaPopaAJM}, \cite{Peterson} these results were improved upon in \cite{ChifanSinclair}, \cite{PopaVaesCrelle} and \cite{IsonoTransactions}, where it was shown that also the Akemann-Ostrand property, which in the case of a group von Neumann algebra compares to quasi-cocycles in bounded (group) cohomology, can be used to get strong solidity results and further prime factorization results \cite{OzawaPopaPrime}.

  \vspace{0.3cm}

  In the current paper we address the question whether also derivations in Hochschild cohomology can be used to obtain (quasi-)cohomological properties like the Akemann-Ostrand property. We do this as follows. Fix a finite von Neumann algebra $M$ with a quantum Markov semi-group on it having a dense $\ast$-algebra $\cA$ in the domain of its generator. We first show that we can construct natural $n$-cocycles in the Hochschild cohomology of $\cA$. The case $n=1$ reduces to the work of Cipriani-Sauvageot mentioned above \cite{CiprianiSauvageot}. The $n$-cocycles are defined inductively and in each inductive step their coefficient bimodule changes by a construction which we refer to as the `gradient tensor product'. Say that the $n$-cocycles take values in the $n$-fold gradient tensor $L_2(M)_{\nabla^{(n)}}$ (we omit the construction here in the introduction). We then analyze when $L_2(M)_{\nabla^{(n)}}$ is quasi-contained in the coarse bimodule; meaning that it is contained as a bimodule in an infinite direct sum of copies of the coarse bimodule. In order to do this we introduce the notion of gradient-$\cS_p$ for a quantum Markov semi-group and prove the following.

  \vspace{0.3cm}

  \noindent  {\bf Theorem A.} If a quantum Markov semi-group is gradient-$\cS_{2n}$ then $L_2(M)_{\nabla^{(n)}}$ is quasi-contained in the coarse bimodule of $M$.

  \vspace{0.3cm}

  So essentially under gradient-$\cS_{2n}$ the cohomology takes place in the coefficient bimodule given by the coarse bimodule. Note that gradient-$\cS_{2n}$ becomes weaker for higher $n$. We illustrate this for $q$-Gaussians algebras introduced by Bo\.{z}ejko and Speicher \cite{BozejkoSpeicher}, see also \cite{BKS}.

  \vspace{0.3cm}

\noindent  {\bf Theorem B.}  For   $\vert q \vert < \dim(\RH)^{-1/p}$  we have that the Ornstein-Uhlenbeck semi-group on the $q$-Gaussian algebra $\Gamma_q(\RH)$ is gradient-$\cS_{p}$.

  \vspace{0.3cm}

  The importance of Theorems A and B is so far mostly witnessed in the case $n=1$. In fact, in Theorem \ref{CartanEilenberg} we show that the cohomology of the $\ast$-algebras associated with $q$-Gaussians vanishes for $n \geq 2$ so that the cocycles we construct are in fact coboundaries. For $n = 1$ we show that Theorem B can be used to obtain further results that serve as in input for the machinery developed in   \cite{PopaVaesCrelle} and \cite{IsonoTransactions}.  We give sufficient conditions on a derivation to imply the Akemann-Ostrand property. We analyze these conditions in the case of group algebras but also many other algebras by assuming that the quantum Markov semi-group  has a type of filtration (or is radial in some sense). In many known natural examples these condtions are verified, see the end of Section \ref{Sect=AO}. As a culminating result we single out the following.

    \vspace{0.3cm}

  \noindent  {\bf Theorem C.} For $\vert q \vert \leqslant \dim(\RH)^{-1/2}$  the $q$-Gaussian algebra $\Gamma_q(\RH)$ satisfies the Akemann-Ostrand property; more precisely condtion AO$^+$ from \cite{IsonoTransactions}.

  \vspace{0.3cm}

  In \cite{Shlyakhtenko} Shlyakhtenko obtained the same result for $\vert q \vert < \sqrt{2} -1$ using that the $q$-Toeplitz algebra is nuclear in this range. In fact the $q$-Toeplitz algebras are isomorphic within this range \cite{NicaDykema} and nuclearity for $\sqrt{2} -1 \leqslant \vert q \vert <1$  is still an open problem. Theorem C thus strictly extends the known range of $q$ for which $\Gamma_q(\RH)$ has AO$^+$ for dimension up until 5.

  \vspace{0.3cm}

  The outline of this paper is as follows. After the preliminaries we introduce the gradient tensor construction in Section \ref{Sect=GradientTensor}. We analyze when a repeated tensor product is contained in the coarse bimodule through the notion of being gradient-$\cS_p$ (Theorem A). We also show that in the group case, under a domain condition, one may always average quantum Markov semi-groups to a semi-group of Fourier multipliers retaining the gradient $\cS_p$-properties.  In Section \ref{Sect=Orhnstein-Uhlenbeck} we illustrate this using $q$-Gaussians and we show that the Ornstein-Uhlenbeck semi-group is gradient-$\cS_{p}$ for $\vert q \vert < \dim(\RH)^{-1/p}$ (Theorem B). Section \ref{Sect=AO} is then concerned with the question of which derivations imply condition AO$^+$. We give sufficient conditions for the Cipriani-Sauvageot derivations \cite{CiprianiSauvageot}; amongst many other examples this includes $q$-Gaussians and we conclude Theorem C.

  \vspace{0.3cm}

  \noindent {\bf Acknowledgements.} The authors thank    Ebrahim Samei for useful discussions and Adam Skalski  for sharing a short proof of Lemma \ref{Lem=Average}.  The authors thank the referees for several comments that led to an improvement of the manuscript.
  Research of MW was partially supported by  National Science Centre (NCN) grant 2016/21/N/ST1/02499, long term structural funding -- Methusalem grant of the Flemish Government -- and by European Research Council Consolidator Grant 614195 RIGIDITY. Parts of this work were completed during a visit of MW to TU Delft; he thanks the university and the host for a stimulating research environment.

  \section{Preliminaries}

  \subsection{General conventions and notation}
  Throughout the paper $M$ is a finite von Neumann algebra with a trace $\tau$.  Let $\Omega_\tau \in L_2(M) := L_2(M, \tau)$ be the cyclic vector given by  $1 \in M$.
  We denote by $\cS_p$ the Schatten-von Neumann class, which is the non-commutative $L_p$-space associated with $B(H)$ and its trace.

  \subsection{Containment of bimodules}
  Let $\cA$ be a $\ast$-algebra. By an $\cA$--$\cA$ bimodule $H$ we mean a Hilbert space together with $\ast$-homomorphisms $\pi_l: \cA \rightarrow B(H)$ and $\pi_r: \cA^{op} \rightarrow B(H)$ whose images commute.  If $\cA$ is a von Neumann algebra then we assume moreover that $\pi_l$ and $\pi_r$ are normal.

   We write $L_2(M)$ for the trivial bimodule and $L_2(M) \otimes L_2(M)$ for the coarse bimodule with the usual left and right actions.
 Let $H$ and $K$ be $M$-$M$ bimodules. We say that $H$ is {\bf contained} in $K$ if $H$ is (isomorphic to) a sub-bimodule of $K$; i.e. $H$ is a closed subspace of $K$ that is invariant for the bimodule actions of $M$. We say that $H$ is {\bf quasi-contained} in $K$ if $H$ is contained in $\oplus_{i \in I} H$ for some index set $I$.  We use Popa's definition of weak containment \cite{PopaIncrest}.

\begin{dfn}
	Let $M$ be a von Neumann algebra and let $H$ and $K$ be $M$-$M$-bimodules. We say that $H$ is {\bf weakly contained} in $K$ if for every $\epsilon > 0$, every finite set $F \subseteq M$ and every $\xi \in H$ there exists  finitely many $\eta_j \in K$ indexed by $j \in G$ such that for $x,y \in F$,
	\[
	\vert \langle x \xi y , \xi \rangle - \sum_{j \in G} \langle x \eta_j y, \eta_j \rangle \vert < \varepsilon.
	\]
	Notation $H \preceq K$.
\end{dfn}

The following clarifies the connection to containment in the way we encounter it in this paper.

\begin{lem}\label{Lem=Containment}
	Let $M$ be a von Neumann algebra and $\cA$ a $\sigma$-weakly dense $\ast$-subalgebra of $M$ with norm closure $A$. Let $H$ be an $A$-$A$-bimodule and let $K$ be an $M$-$M$-bimdoule.   Suppose that there exists a dense subspace $D \subseteq H$ such that for every $\xi \in D$ there exists an $\eta \in K$ such that for every $x,y \in \cA$ we have
	\[
	\langle x \xi y, \xi \rangle = \langle x \eta y, \eta \rangle.
	\]
	Then for every $\xi \in D$ the sub-bimodule $H_\xi := \overline{ A \xi A}$ of $H$ is contained in $K$   as $A$-$A$ bimodules. Consequently,  $H$  is  contained in a (possibly infinite) direct sum of copies of $K$ and further the $A$-$A$ bimodule actions on $H$ extend to normal $M$-$M$-bimodule actions.
\end{lem}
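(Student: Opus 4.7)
The plan is to first construct an explicit isometric $A$-$A$ bimodule embedding of each $H_\xi$ into $K$, then assemble these into a global isometric embedding of $H$ into a direct sum of copies of $K$ via a Zorn argument, and finally transfer the normal $M$-$M$-bimodule structure back to $H$ using $\sigma$-weak density of $A$ in $M$.

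First I would fix $\xi \in D$ with associated $\eta \in K$ and define $T$ on $\cA \xi \cA$ by $T(\sum_i a_i \xi b_i) := \sum_i a_i \eta b_i$. Expanding
\[
\Big\| \sum_i a_i \xi b_i \Big\|^2 = \sum_{i,j} \langle (a_j^* a_i)\, \xi\, (b_i b_j^*), \xi \rangle
\]
and applying the inner-product hypothesis with $x = a_j^* a_i$ and $y = b_i b_j^*$ (both in $\cA$), one obtains the same identity with $\xi$ replaced by $\eta$, so $T$ is inner-product preserving on $\cA \xi \cA$; in particular it is well-defined and isometric. Boundedness of the left and right $A$-actions together with norm density of $\cA$ in $A$ then lets $T$ extend to an isometric $A$-$A$-bimodule embedding $T_\xi : H_\xi \to K$, giving the first assertion.

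Next, let $\cF$ be the collection of closed $A$-$A$ sub-bimodules of $H$ admitting an isometric $A$-$A$ embedding into $K$; by the previous step every $H_\xi$ is in $\cF$. Pick via Zorn a maximal family $\{W_i\}_{i \in I} \subseteq \cF$ of pairwise orthogonal members and set $H_0 := \overline{\bigoplus_i W_i}$. If $H_0 \neq H$, density of $D$ in $H$ yields some $\xi \in D$ with $P_{H_0^\perp}\xi \neq 0$. The restriction $P := P_{H_0^\perp}|_{H_\xi} : H_\xi \to H_0^\perp$ is $A$-$A$-equivariant, hence so are both factors in its polar decomposition $P = U|P|$; consequently the non-zero closed sub-bimodule $\overline{\mathrm{Ran}(P)} \subseteq H_0^\perp$ is $A$-$A$-isomorphic via $U$ to a closed sub-bimodule of $H_\xi$, which in turn embeds isometrically into $K$ by composition with $T_\xi$. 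This contradicts maximality, so $H = \bigoplus_i W_i$ embeds isometrically into $\bigoplus_i K$ as an $A$-$A$-bimodule. The polar decomposition step is the main subtlety here: a priori $H_\xi \cap H_0^\perp$ can be zero even when $P_{H_0^\perp}\xi \neq 0$, so one must pass through $\overline{\mathrm{Ran}(P)}$ rather than a naive intersection.

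For the final assertion I would show that any closed $A$-$A$ sub-bimodule $W$ of a normal $M$-$M$-bimodule is automatically $M$-$M$-invariant. Given $m \in M$, choose a net $a_\alpha \in A$ with $a_\alpha \to m$ $\sigma$-weakly, which exists since $\cA \subseteq A$ is $\sigma$-weakly dense in $M$. Normality of the left $M$-action gives $a_\alpha w \to mw$ weakly in the ambient bimodule for every $w \in W$, and weak closedness of the closed subspace $W$ forces $mw \in W$; the right action is handled symmetrically. Applied inside $\bigoplus_i K$ with its coordinate-wise normal $M$-$M$-action, this shows the image of $H$ is $M$-$M$-invariant, and pulling the action back along the embedding produces the desired normal $M$-$M$-bimodule structure on $H$ extending the given $A$-$A$-action.
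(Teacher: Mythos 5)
Your proposal is correct, and its overall architecture matches the paper's: an isometric $A$-$A$ embedding $H_\xi\to K$ built from the matching vector $\eta$, a Zorn maximality argument producing an orthogonal decomposition of $H$ into pieces embeddable in $K$, and a transfer of the normal $M$-$M$ structure back through the embedding. The one place where you genuinely diverge is the contradiction step in the Zorn argument. The paper runs Zorn over families of \emph{cyclic} sub-bimodules $H_{\xi_i}=\overline{A\xi_i A}$ and, faced with the problem that $(1-P)\xi$ need not lie in $D$, manufactures a new matching vector directly in $K$: from the inequality $\Vert\sum_i a_i(1-P)\xi b_i\Vert\leqslant\Vert\sum_i a_i\eta b_i\Vert$ it gets an intertwining contraction $v\colon\overline{A\eta A}\to\overline{A(1-P)\xi A}$ and sets $\eta':=(v^\ast v)^{1/2}\eta$, so that $(1-P)\xi$ itself satisfies the hypothesis of the first step. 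You instead run Zorn over arbitrary closed sub-bimodules embeddable in $K$ and stay entirely inside $H$: the equivariant compression $P_{H_0^\perp}|_{H_\xi}$ is polar-decomposed, and its range is identified with a sub-bimodule of $H_\xi$, which embeds via $T_\xi$. The two devices are functional-analytically cousins (both exploit that intertwiners and their polar data respect the bimodule structure); yours yields a decomposition into possibly non-cyclic summands and avoids having to produce a matching vector for a vector outside $D$, while the paper's yields the cleaner statement $H=\bigoplus_i H_{\xi_i}$ with each summand cyclic. Your last paragraph also spells out, via $\sigma$-weak density and weak closedness, why the image is $M$-$M$ invariant and why the pulled-back action is normal and extends the $A$-$A$ action, a point the paper asserts more briefly. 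Note that both your argument (when you use $P_{H_0^\perp}\xi\neq 0$ to conclude $P_{H_0^\perp}|_{H_\xi}\neq 0$, i.e.\ $\xi\in H_\xi$) and the paper's tacitly use that the bimodule actions are unital (non-degenerate), which is the standing convention here, so this is a shared implicit assumption rather than a gap.
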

\begin{proof}
	Take $\xi \in D$. By assumption there exists $\eta \in K$ such that for all $x,y \in \cA$ we have that $\langle x \xi y, \xi \rangle = \langle x \eta y, \eta \rangle$. Then set $U: H_\xi \rightarrow K$ by $a \xi b \mapsto a \eta b$ where $a,b \in \cA$  and it follows that this map is isometric and intertwines the bimodule actions (of $\cA$ and then by continuity of $A$).

Since $UH_\xi$ admits normal $M$-$M$ bimodule actions, we can extend the $A$-$A$ actions on $H_\xi$ to normal $M$-$M$ bimodule actions.

	Let $\Sigma$ be the set of all families of unit vectors $(\xi_i)_i$ in $H$ such that each $H_{\xi_i}:=\overline{A \xi_i A}$ is a sub $A$-$A$ bimodule of $K$ and that all $H_{\xi_i}$ are orthogonal with each other. By Zorn's lemma, take a maximal element $(\xi_i)_i$ in $\Sigma$. Let  $P_i$ be the orthogonal projection onto $H_{\xi_i}$ and suppose by contradiction that $P:=\sum_i P_i \neq 1_H$. Note that $P$ commutes with $A$-$A$ bimodule actions. Let $\xi\in D$ be such that $(1-P)\xi\neq 0$ and fix $\eta\in K$ such that $\langle a\xi b,\xi\rangle=\langle a\eta b,\eta\rangle$.
Then observe that for all finitely many $a_i,b_i \in \mathcal A$,
	$$ \| \sum_i a_i (1-P)\xi b_i \| =   \| (1-P) \sum_i a_i \xi b_i \| \leqslant \| \sum_i a_i \xi b_i \| = \| \sum_i a_i \eta b_i \|.$$
One can define a contraction $v\colon \overline{A \eta A} \to \overline{A (1-P)\xi A}$ by $v (a \eta b ) = a (1-P)\xi b$ for $a,b\in \mathcal A$. Then since $v^*v$ commutes with $A$-$A$ bimodule actions, if we put $\eta':= (v^*v)^{1/2}\eta$, then it satisfies
	$$\langle a (1-P)\xi b, (1-P)\xi\rangle = \langle a \eta' b,\eta'\rangle, \quad \text{for any }a,b \in \mathcal A.$$
This contradicts the maximality of $(\xi_i)_i$.
We conclude that there is an $A$-$A$ bimodule embedding
	$$ H= \bigoplus_i H_{\xi_i} \subset \bigoplus_i K $$
and we can extend the $A$-$A$ actions to normal $M$-$M$ bimodule actions via this embedding.
\end{proof}

\subsection{Hochschild cohomology} Fix an algebra $\cA$. We define Hochschild cohomology (see \cite{CartanEilenberg}, \cite{ThomGafa} and also \cite{ConnesShlyakhtenko}) through the bar resolution as follows.
Let $F_n$ be the space of all linear maps $f\colon \cA^{\otimes n} \rightarrow H$ to a fixed $\cA$-$\cA$ bimodule $H$. For $f \in F_n$ we define $d_n f\colon \cA^{\otimes n + 1} \rightarrow H$ by
\begin{align*}
&(d_n f)(a_1 \otimes a_2 \otimes \ldots \otimes a_{n+1})
 =  a_1 \cdot f( a_2 \otimes \ldots \otimes a_{n+1}) \\
&+ \sum_{k=1}^{n} (-1)^{k}  f( a_1 \otimes \ldots \otimes a_k a_{k+1} \otimes \ldots \otimes a_{n+1})
+ (-1)^{n+1}f(a_1 \otimes \ldots \otimes a_{n}) \cdot a_{n+1}
\end{align*}
with $a_1, \ldots, a_{n+1} \in \cA$.
Further, set for $\xi \in H$ the map $d_0\xi: \cA \rightarrow H$ by $(d_0 \xi)(a) =  a \xi - \xi a, a \in \cA$.
We get a chain called the {\bf bar resolution},
\[
\ldots \leftarrow^{d_4} F_4 \leftarrow^{d_3} F_{3}  \leftarrow^{d_2} F_2 \leftarrow^{d_1} F_1 \leftarrow^{d_0} H,
\]
with $d_{n+1} \circ d_n = 0$. Let $C^n(\cA, H)$ be the kernel of $d_n$, which we call the {\bf Hochschild $n$-cocycles}. Let $B^n(\cA, H)$ be the image of $d_{n-1}$, which we call the {\bf $n$-coboundaries}. By definition $B^0(\cA, H) = \{ 0\}$.  Let
\[
H^n(\cA, H) = C^n(\cA, H)/B^n(\cA, H)
\]
be the {\bf $n$-th Hochschild cohomology group} with coefficients in $H$.

  \section{Gradient tensoring, $n$-cocycles and gradient-$\cS_p$}\label{Sect=GradientTensor}
The aim of this section is to construct $n$-cocycles in Hochschild cohomology using quantum Markov semi-groups.

  \begin{dfn}
  	A {\bf quantum Markov semi-group} $(\Phi_t)_{t \geq 0}$ on $(\cM, \tau)$ is a semi-group of unital completely positive maps $\Phi_t: \cM \rightarrow \cM$ that preserve the trace, i.e. $\tau \circ \Phi_t = \tau$ for all $t \geq 0$. Further we assume that $\Phi_t$ is symmetric $\tau(\Phi_t(x) y) = \tau(x \Phi_t(y)), x,y \in M$. Moreover, we assume that for every $x \in \cM$ the map $t \mapsto \Phi_t(x)$ is continuous for the strong topology of $\cM$.
  \end{dfn}

  \subsection{Gradient tensoring}

  Let $(\Phi_t)_{t \geq 0}$ be a quantum Markov semi-group on $M$. Let
  \[
  \Phi_t^{(2)}: L_2(M) \rightarrow L_2(M): x \Omega_\tau \mapsto \Phi_t(x) \Omega_\tau,
  \]
   be the corresponding semi-group on $L_2(M)$.    Let $\Delta \geq 0$ be its generator, i.e. $\Delta$ is the unbounded positive (self-adjoint) operator on $L_2(M)$ such that
   \[
   \exp(-t \Delta) =  \Phi_t^{(2)}
   \]
    as a semi-group on $L_2(M)$.

     \vspace{0.3cm}

\noindent {\bf Assumption 1.} We shall assume that there exists a $\sigma$-weakly dense $\ast$-subalgebra $\cA$  of $\cM$ such that $\cA \Omega_\tau$ is contained in the domain of $\Delta$ and moreover  $\Delta(\cA \Omega_\tau) \subseteq \cA\Omega_\tau$. We simply write $\Delta(a), a \in \cA$ for the map $\Delta$ on the level of $\cA$. We assume moreover  that for every $a \in \cA$ the map  $t \mapsto \Phi_t(a)$ is norm continuous.
  Let
  \[
  A := \overline{\cA}^{\Vert \: \cdot \: \Vert}
  \]
   be the C$^\ast$-closure of $\cA$.
    In general we cannot guarantee the existence of such an algebra $\cA$, but in many concrete cases it exists.  The assumption should be compared to similar assumptions and remarks made in \cite[Section 5]{CiprianiFagnola}, \cite{JungeMei} or \cite{CaspersGradient}.

    \vspace{0.3cm}

    We introduce the following definition, which is in principle $\cA$-dependent. For $p = 2$ it was introduced in \cite{CaspersGradient}.

  \begin{dfn}\label{Dfn=GradientSp}
   Let $1 \leqslant p \leqslant \infty$. 	We say that a quantum Markov semi-group $(\Phi_t)_{t \geq 0}$  is {\bf immediately gradient-$\cS_p$} if for every $a,b \in \cA$ and every $t > 0$ we have that
  	\begin{equation}\label{Eqn=GradientMap}
  \Psi^{a,b}_t:	x  \mapsto -\frac{1}{2} \Phi_t \left(  \Delta(ax b) + a \Delta(x) b - \Delta(ax) b - a \Delta(xb)   \right),
  	\end{equation}
  	extends to  a bounded map $x \Omega_\tau \mapsto \Psi^{a,b}_t(x) \Omega_\tau$  on $L_2(\cM)$ that is moreover in $\cS_p$. 	 $(\Phi_t)_{t \geq 0}$  is {\bf  gradient-$\cS_p$} if for every $a, b \in \cA$ the map  \eqref{Eqn=GradientMap} is in $\mathcal{S}_p$ for $t = 0$. We  set  $\Psi^{a,b} = \Psi^{a,b}_0$.
  \end{dfn}

\begin{example}
We illustrate Definition \ref{Dfn=GradientSp} by a simple (counter)example. Suppose that $\mathbb{T}$ is the torus seen as the unit circle in $\mathbb{C}$. Let $e_k(z) = z^k, z \in \mathbb{T}$. Let $\mathcal{A}$ be the span of $e_k, k \in \mathbb{Z}$, i.e. the $\ast$-algebra of trigonometric polynomials. Set  $M= L_\infty(\mathbb{T})$.
\begin{itemize}
\item  Let $\Delta$ be the Laplacian given by $\Delta(e_k) = k^2 e_k$ which generates the {\bf heat semi-group}  $(e^{-t \Delta})_{t \geq 0}$. This semi-group is not gradient-$\cS_p$ for any $1 \leq p \leq \infty$. Indeed, we find that
\[
\Psi^{e_l, e_m}(e_k) = -\frac{1}{2} ( (l+k+m)^2 + k^2 - (l+k)^2 - (k+m)^2 )   e_{l + k + m} = - lm \: e_{l + k + m},
\]
and this map is not even compact on $L_2(\mathbb{T})$ unless $lm = 0$.
\item
However, if we  consider the {\bf Poisson semi-group} $(e^{-t \Delta^{\frac{1}{2}}})_{t \geq 0}$ then we find,
\[
\Psi^{e_l, e_m}(e_k) = -\frac{1}{2} ( \vert l+k+m \vert + \vert k \vert - \vert l+k \vert - \vert k+m \vert )   e_{l + k + m},
\]
which is 0 as soon as $\vert k\vert \geq \vert l \vert + \vert m \vert$. Therefore $\Psi^{e_l, e_m}$ is finite rank and therefore  $(e^{-t \Delta^{\frac{1}{2}}})_{t \geq 0}$ is gradient-$\cS_p$ for every $p \in [1, \infty]$.

\end{itemize}
\end{example}

\begin{rmk}
  Morally, one can expect that a quantum Markov semi-group is gradient-$\cS_p$ if the eigenvalues of the generator grow (almost) linearly (like for the Poisson semi-group) and if there is not too much interaction between the operators $a$ and $b$ in an expression $axb$ if $x$ has a very large `length' (this obviously requires more structure to explain in more detail). The current paper gives  quantitative examples of this, see Section \ref{Sect=Orhnstein-Uhlenbeck} and Theorem B of Section \ref{Sect=Intro}. More examples as well as stability properties for gradient-$\cS_2$ for free products were given in \cite{CaspersGradient}.
\end{rmk}

  In \cite{CiprianiSauvageot} the following bimodule is constructed.  We set the {\bf gradient form} (or  {\bf carr\'e du champ}),
  \[
\Gamma(x,y) = \frac{1}{2}( \Delta(y)^\ast x + y^\ast  \Delta(x) - \Delta(y^\ast x) ), \qquad x,y \in \mathcal{A},
  \]
  which we view as an $\cA$-valued inner product.  Let $H$ be any $A$-$A$-bimodule.
  Equip the algebraic tensor product $\mathcal{A} \algtimes H$ with the (degenerate) inner product
  \[
  \langle x \otimes \xi, y \otimes \eta \rangle =  \langle \Gamma(x,y) \xi, \eta  \rangle, \qquad x,y \in \mathcal{A}, \xi, \eta \in H.
  \]
  The Hilbert space obtained by quotienting out the degenerate part and taking the completion will be called the {\bf gradient tensor product} which we denote by $H_\nabla$. We will denote by $a \notimes \xi, a \in \cA, \xi \in H$ the class of $a \otimes \xi$ in $H_\nabla$.
   $H_\nabla$  has an $\mathcal{A}$-$\cA$ bimodule structure given by the left action
  \begin{equation}\label{Eqn=LeftAction}
  x \cdot (y \notimes \xi) = xy \notimes \xi - x \notimes y \xi, \qquad x,y \in \mathcal{A}, \xi \in H,
  \end{equation}
  and the right action
  \begin{equation}\label{Eqn=RightAction}
  (y \notimes \xi ) \cdot x = y \notimes \xi x, \qquad x,y \in \mathcal{A}, \xi \in H.
  \end{equation}
  \begin{prop}\label{Prop=Module}
  	The left and right actions defined in \eqref{Eqn=LeftAction} and \eqref{Eqn=RightAction} are well-defined contractive left and right actions of $\cA$ that moreover commute. That is, $H_\nabla$ is an $A$-$A$-bimodule where $A$ is the C$^\ast$-closure of $\cA$.
  \end{prop}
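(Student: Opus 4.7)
The plan is to verify the three assertions --- well-definedness, contractivity, and commutativity of the actions --- by identifying $H_\nabla$ with a closed subspace of an interior Hilbert tensor product and then invoking standard facts about Hilbert C$^*$-correspondences. Concretely, by \cite{CiprianiSauvageot} there exists an $M$-$M$-bimodule $\cK$ and a closable derivation $\partial\colon \cA \to \cK$ such that $\Gamma(x,y) = \langle \partial x, \partial y\rangle_\cA$, with the right-hand side being an $\cA$-valued inner product that turns $\cK$ into a pre-C$^*$-correspondence over $\cA$. The assignment $y \notimes \xi \mapsto \partial(y) \otimes \xi$ then gives an isometry of $H_\nabla$ into the interior tensor product $\cK \otimes_\cA H$, since the inner product on $H_\nabla$ matches $\langle \partial y \otimes \xi, \partial y' \otimes \xi'\rangle = \langle \langle \partial y', \partial y\rangle_\cA \xi, \xi'\rangle_H$.

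For the right action, positivity of the Gram matrix $[\Gamma(y_i, y_j)] \in M_n(\cA)$ gives $\mathrm{diag}(x)^*[\Gamma(y_i, y_j)]\mathrm{diag}(x) \leqslant \|x\|^2 [\Gamma(y_i, y_j)]$ in $M_n(\cA)$, and pairing with $(\xi_1, \ldots, \xi_n) \in H^n$ via the bimodule inner product on $H$ yields
\[
\Bigl\| \sum_i (y_i \notimes \xi_i) x \Bigr\|^2 \leqslant \|x\|^2 \Bigl\| \sum_i y_i \notimes \xi_i \Bigr\|^2,
\]
so the right action is contractive and hence well-defined (norm-zero elements go to norm-zero elements). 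For the left action, the Leibniz rule $\partial(xy) = x\partial(y) + \partial(x) y$ together with $\cA$-balance of the interior tensor product yields
\[
x \cdot (\partial(y) \otimes \xi) = x\partial(y) \otimes \xi = \partial(xy) \otimes \xi - \partial(x) y \otimes \xi = \partial(xy) \otimes \xi - \partial(x) \otimes y\xi,
\]
which reproduces formula \eqref{Eqn=LeftAction} exactly. Hence the left action in question is the left action on the first factor of $\cK \otimes_\cA H$, and is contractive by the standard C$^*$-correspondence inequality $\langle x\eta, x\eta\rangle_\cA \leqslant \|x\|^2 \langle \eta, \eta\rangle_\cA$.

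Commutativity of left and right actions is then transparent, as they act on different factors of the tensor product; alternatively it follows from the direct computation $x \cdot ((y \notimes \xi) z) = xy \notimes \xi z - x \notimes y\xi z = (x \cdot (y \notimes \xi)) z$. Multiplicativity $L_{xy} = L_x L_y$ and adjoint-compatibility $L_{x^*} = L_x^*$ are equally immediate from the same identification. Both actions finally extend by continuity from $\cA$ to the C$^*$-closure $A = \overline{\cA}^{\|\cdot\|}$. The main (mild) obstacle, should one wish to avoid invoking \cite{CiprianiSauvageot}, is to derive the positivity of $[\Gamma(y_i, y_j)]$ and a Leibniz-type identity for $\Gamma$ directly from the formula $\Gamma(x,y) = \tfrac{1}{2}(\Delta(y)^* x + y^*\Delta(x) - \Delta(y^*x))$; both are routine but somewhat involved algebraic manipulations.
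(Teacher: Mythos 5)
Your reduction is close in spirit to the paper, whose entire proof is to cite \cite[Lemma 3.5]{CiprianiSauvageot} (the case $H=L_2(M)$) and observe that the same argument runs verbatim for an arbitrary $A$-$A$-bimodule $H$, the right action being straightforward; you instead try to factor everything through a $\Gamma$-valued Cipriani--Sauvageot correspondence $\cK$ and interior tensoring. As written, however, your argument has a gap at exactly the point carrying the analytic content: contractivity of the left action. The ``standard C$^\ast$-correspondence inequality'' $\langle x\eta,x\eta\rangle_\cA\leqslant\Vert x\Vert^2\langle\eta,\eta\rangle_\cA$ is available only after one knows that the left action of $\cA$ on $\cK$ is by bounded adjointable operators of norm at most $\Vert x\Vert$ --- and that is precisely the module-level form of the statement to be proved. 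This is not what \cite{CiprianiSauvageot} supplies: there one finds the Hilbert-space statement for the scalar-valued inner product, i.e.\ contractivity of the twisted left action on $L_2(M)_\nabla$, not a pre-C$^\ast$-correspondence with $\cA$-valued inner product $\Gamma$ on which $\cA$ already acts adjointably; so your appeal is either circular or a misattribution. The gap is repairable: from the scalar contractivity of \cite[Lemma 3.5]{CiprianiSauvageot} one deduces the operator inequality $\langle x\zeta,x\zeta\rangle_M\leqslant\Vert x\Vert^2\langle\zeta,\zeta\rangle_M$ in $M$ for $\zeta=\sum_i a_i\otimes b_i$ by pairing with an arbitrary $\xi\in L_2(M)$, since both sides then become squared norms in $L_2(M)_\nabla$ of $x\cdot\zeta_\xi$ and $\zeta_\xi$ respectively, where $\zeta_\xi=\sum_i a_i\otimes b_i\xi$; with that inequality in hand, your transfer to a general bimodule $H$ by applying the positivity-preserving representation $\pi_l$ does work. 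Alternatively one simply reruns the Cipriani--Sauvageot proof with $H$ in place of $L_2(M)$, which is what the paper does.

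Two further points. The displayed inequality ${\rm diag}(x)^\ast[\Gamma(y_i,y_j)]{\rm diag}(x)\leqslant\Vert x\Vert^2[\Gamma(y_i,y_j)]$ in $M_n(\cA)$ is false in general: conjugation by a fixed element does not preserve the order (already for $n=1$, take a projection and a unitary moving its range). The right action is nonetheless contractive, but the correct mechanism uses commutation: in $H_\nabla$ the element $x$ acts on the $H$-leg through $\pi_r(x^{\op})$, which commutes with the left action of every entry $\Gamma(y_i,y_j)$; writing $P\in M_n(B(H))$ for the positive matrix with entries $\pi_l(\Gamma(y_j,y_i))$ (so that its quadratic form at $(\xi_1,\dots,\xi_n)$ is $\Vert\sum_i y_i\notimes\xi_i\Vert^2$, positivity being the same complete positivity of $\Gamma$ used to define $H_\nabla$) and $D={\rm diag}(\pi_r(x^{\op}))$, one gets $D^\ast PD=P^{1/2}D^\ast DP^{1/2}\leqslant\Vert x\Vert^2P$, which is the desired estimate; this is the ``straightforward'' part alluded to in the paper. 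Finally, with the paper's convention ($\Gamma$ linear in the first and conjugate-linear in the second variable) your identification requires $\langle\partial y',\partial y\rangle_\cA=\Gamma(y,y')$ rather than $\Gamma(y',y)=\Gamma(y,y')^\ast$; this is harmless, but the convention should be fixed consistently for the Leibniz computation to reproduce \eqref{Eqn=LeftAction}.
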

\begin{proof}
The proof in case $H = L_2(M)$ is the trivial $M$-$M$-bimodule is given as  \cite[Lemma 3.5]{CiprianiSauvageot}. But in fact the same proof works for any $A$-$A$-bimodule $H$.  Note that the proof that the right action is contractive is straightforward.
\end{proof}

\begin{dfn}
	We call $(\Phi_t)_{t \geq 0}$ {\bf gradient coarse}  if the $A$-$A$-bimodule actions on $L_2(M)_{\nabla}$ extend to normal $M$-$M$-bimodule actions and further $L_2(M)_{\nabla}$ is weakly contained in the coarse bimodule of $M$.
\end{dfn}

By Proposition \ref{Prop=Module} for an $A$-$A$-bimodule $H$ we may define
 \[
 H_{\nabla^{(n)}} = ((H_{\nabla})_{\nabla} \ldots )_\nabla,
 \]
 for the $n$-fold application of $H \mapsto H_\nabla$. The following connects the map \eqref{Eqn=GradientMap} to the bimodule structure defined above.

 \begin{lem}\label{Lem=GradientComp}
 	Let $H$ be an $A$-$A$-bimodule.
 	For $x, y, a, b \in \cA$ and $\xi, \eta \in H$ we have,
 	\[
 	     \langle  x \cdot (a \notimes \xi) \cdot y, b \notimes \eta   \rangle =
 	     \langle  \Psi^{b^\ast, a}(x) \xi y, \eta \rangle. 	
 	\]
 \end{lem}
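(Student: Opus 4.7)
The claim is essentially a direct computation: both sides will reduce to the same linear combination of four terms, so the plan is to expand each side via its definition and then match.

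First I would unfold the left-hand side using the bimodule actions \eqref{Eqn=LeftAction} and \eqref{Eqn=RightAction}. Applying the right action first and then the left action gives
\[
x \cdot (a \notimes \xi) \cdot y = x \cdot (a \notimes \xi y) = xa \notimes \xi y - x \notimes a\xi y.
\]
Taking the inner product against $b \notimes \eta$ and using the definition of the gradient inner product in terms of the carr\'e du champ $\Gamma$, this becomes
\[
\bigl\langle \Gamma(xa,b)\, \xi y, \eta \bigr\rangle - \bigl\langle \Gamma(x,b)\, a \xi y, \eta \bigr\rangle,
\]
where we have used that the right action of $\cA$ on $H$ commutes with the left action, and that $\Gamma$ is $\cA$-valued so it acts on the left of $H$.

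Next I would substitute the defining formula for $\Gamma$, namely $\Gamma(c,d) = \tfrac{1}{2}(\Delta(d)^\ast c + d^\ast \Delta(c) - \Delta(d^\ast c))$, into both terms. The two contributions $\tfrac{1}{2}\Delta(b)^\ast xa \cdot \xi y$ and $\tfrac{1}{2}\Delta(b)^\ast x \cdot a\xi y$ cancel after subtraction. What remains equals
\[
\tfrac{1}{2}\bigl\langle \bigl( b^\ast \Delta(xa) - \Delta(b^\ast xa) - b^\ast \Delta(x) a + \Delta(b^\ast x) a \bigr)\, \xi y, \eta \bigr\rangle.
\]

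Finally I would recognize the operator inside the bracket. By Definition \ref{Dfn=GradientSp} applied at $t=0$ (so $\Phi_0 = \id$), with $a$ replaced by $b^\ast$ and $b$ replaced by $a$, one has
\[
\Psi^{b^\ast,a}(x) = -\tfrac{1}{2}\bigl( \Delta(b^\ast x a) + b^\ast \Delta(x) a - \Delta(b^\ast x) a - b^\ast \Delta(x a) \bigr),
\]
which is exactly $\tfrac{1}{2}$ times the expression above. Hence the remaining term equals $\langle \Psi^{b^\ast,a}(x)\, \xi y, \eta \rangle$, as desired.

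There is no real obstacle here beyond bookkeeping: the subtlety is only to keep straight that $\Gamma$ is $\cA$-valued (so commutes with the $H$-valued vectors via the $\cA$-action), and to match signs and arguments carefully when inserting the definition of $\Psi^{b^\ast,a}$. One should also justify implicitly that all manipulations on the algebraic tensor product $\cA \algtimes H$ descend to $H_\nabla$, but this is automatic from Proposition \ref{Prop=Module}.
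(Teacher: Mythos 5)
Your computation is correct and follows exactly the same route as the paper's proof: expand $x\cdot(a\notimes\xi)\cdot y = xa\notimes\xi y - x\notimes a\xi y$, rewrite the gradient inner product via $\Gamma$, cancel the $\Delta(b)^\ast$ terms, and identify the remainder with $\Psi^{b^\ast,a}(x)$ at $t=0$. Nothing is missing.
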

 \begin{proof}
 	Indeed, we find that,
 	 	\[
 	\begin{split}
 	\langle  x \cdot (a \notimes \xi) \cdot y, b \notimes \eta   \rangle = &
 	\langle    (xa \notimes \xi y - x \notimes a \xi y), b \notimes \eta \rangle \\
 	= &
 	\langle  (\Gamma(xa, b) - \Gamma(x, b)a) \xi y,  \eta \rangle \\
 	= &
 	\frac{1}{2} \langle  (
 	\Delta(b^\ast) xa + b^\ast \Delta(xa) - \Delta(b^\ast xa)
 	- \Delta(b^\ast )xa - b^\ast \Delta(x) a + \Delta(b^\ast x) a )\xi y ,  \eta \rangle	\\
 	= &
 	\langle  \Psi^{b^\ast, a}(x) \xi y, \eta \rangle.
 	\end{split}
 	\]
 	This concludes the proof.
 	\end{proof}

  \begin{rmk}\label{Rmk=Positive}
  	Let $H$ be an $A$-$A$-bimodule and let $\xi \in H$. Then the functional on $A \otimes_{{\rm max}} A^{\op}$ given by $x \otimes y^{\op} \mapsto \langle x \xi y , \xi \rangle$ is positive and therefore also its restriction to $A \otimes_{{\rm alg}} A^{\op} \rightarrow \mathbb{C}$  sends elements of the form $x^\ast x$ with  $x \in A \otimes_{{\rm alg}} A^{\op}$ to non-negative reals. Therefore if this map is $\minotimes$ continuous we obtain a positive map $A \minotimes A^{\op} \rightarrow \mathbb{C}$.
  \end{rmk}

	For any $n\in \N_{\geq 1}$, we say that a vector $\xi \in L^2(M)_{\nabla^{(n)}}$ is \textit{algebraic} if it is contained in a linear span of elements of $a_0\otimes_\nabla a_1\otimes_\nabla \cdots \otimes_\nabla a_n$ for some $a_0,\ldots,a_n\in \mathcal A$.

\begin{thm}\label{Thm=IGHS-GC}
 	Let $n \in \mathbb{N}_{\geq 1}$. Suppose that the quantum Markov semi-group $\Phi = (\Phi_t)_{t \geq 0}$ is gradient-$\cS_p$ for $p = 2 n$.

Then for any algebraic $\xi \in L_2(M)_{\nabla^{(n)}}$, $\overline{A\xi A}$ is contained in the coarse bimodule $L_2(M) \otimes L_2(M)$ as $A$-$A$ bimodules. 	
 	In particular, $L_2(M)_{\nabla^{(n)}}$ is contained in an infinite multiple of the coarse bimodule $L_2(M) \otimes L_2(M)$ as $A$-$A$ bimodules and the $A$-$A$ bimodule actions on $L_2(M)_{\nabla^{(n)}}$ extend to normal $M$-$M$ bimodule actions via this embedding.

 \end{thm}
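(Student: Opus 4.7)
The strategy is to verify the hypothesis of Lemma \ref{Lem=Containment}: given an algebraic $\xi \in L_2(M)_{\nabla^{(n)}}$ I must produce a vector $\eta$ in the coarse bimodule $L_2(M) \otimes L_2(M)$ satisfying $\langle x \xi y, \xi \rangle = \langle x \eta y, \eta \rangle$ for all $x, y \in \mathcal{A}$. By sesquilinearity it suffices to analyse the inner product $\langle x \xi y, \xi' \rangle$ between two elementary tensors $\xi = a_0 \otimes_\nabla \cdots \otimes_\nabla a_n \Omega_\tau$ and $\xi' = b_0 \otimes_\nabla \cdots \otimes_\nabla b_n \Omega_\tau$, and then assemble the pieces.

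The first main step is to iterate Lemma \ref{Lem=GradientComp}, peeling off one outer slot at a time. After $n$ applications one obtains
\[
\langle x \xi y, \xi' \rangle \;=\; \tau\bigl( b_n^{*} \, T_{\xi, \xi'}(x) \, a_n \, y \bigr), \qquad T_{\xi, \xi'} = \Psi^{b_{n-1}^{*}, a_{n-1}} \circ \Psi^{b_{n-2}^{*}, a_{n-2}} \circ \cdots \circ \Psi^{b_0^{*}, a_0}.
\]
By Assumption 1 each factor $\Psi^{b_k^{*}, a_k}$ preserves $\mathcal{A}$, so the composition makes sense on $\mathcal{A}$. The gradient-$\cS_{2n}$ hypothesis promotes every one of the $n$ factors to a Schatten-$\cS_{2n}$ operator on $L_2(M)$, and the Schatten-class H\"older inequality then gives $T_{\xi, \xi'} \in \cS_2$, i.e.\ $T_{\xi, \xi'}$ is Hilbert--Schmidt.

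The second main step is to realise the form $(x, y) \mapsto \tau(b_n^* T(x) a_n y)$ as a vector coefficient of the coarse bimodule. The Hilbert--Schmidt ideal $\cS_2(L_2(M))$ is isomorphic, as an $M$-$M$ bimodule, to $L_2(M) \otimes L_2(M)$ (equipped with the coarse bimodule structure) via the standard Hilbert--Schmidt correspondence twisted by the modular conjugation $J$. Under this identification the vector in $L_2(M) \otimes L_2(M)$ associated to $T_{\xi, \xi'}$, corrected by the action of the outer multipliers $b_n$ and $a_n$, gives the desired $\eta$; for a general algebraic $\xi = \sum_i \xi^{(i)}$ the finitely many resulting cross terms assemble into an $\eta$ lying in a finite direct sum of copies of the coarse bimodule, which is again contained in the coarse bimodule. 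Applying Lemma \ref{Lem=Containment} to each algebraic $\xi$ yields both the containment $\overline{A \xi A} \hookrightarrow L_2(M) \otimes L_2(M)$ and the normality of the induced $M$-$M$ action on $\overline{A\xi A}$. The global statement -- embedding of $L_2(M)_{\nabla^{(n)}}$ into an infinite multiple of the coarse bimodule and extension of the ambient bimodule actions -- then follows directly from the Zorn's lemma construction at the end of the proof of Lemma \ref{Lem=Containment}.

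I would expect the most delicate point to be this final matching: converting the structurally single-sum expression $\tau(b_n^{*} T(x) a_n y)$ into the doubly-indexed matrix coefficient $\sum_{k,l}\langle x v_k, v_l\rangle\langle w_k y, w_l\rangle$ characteristic of the coarse bimodule, absorbing the outer factors $a_n, b_n$ into the coarse vector so that sesquilinearity and bimodule-covariance both work out, and handling the cross terms that appear when $\xi$ is a genuine linear combination of elementary tensors rather than a single one.
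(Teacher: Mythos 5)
Your first step reproduces the paper's computation: iterating Lemma \ref{Lem=GradientComp} to get $\langle x\xi y,\xi'\rangle=\tau\bigl(y\, b_n^\ast T_{\xi,\xi'}(x)a_n\bigr)$ with $T_{\xi,\xi'}=\Psi^{b_{n-1}^\ast,a_{n-1}}\circ\cdots\circ\Psi^{b_0^\ast,a_0}$, and the Schatten--H\"older estimate putting $T_{\xi,\xi'}$ (hence $x\Omega_\tau\mapsto b_n^\ast T_{\xi,\xi'}(x)a_n\Omega_\tau$) in $\cS_2$. The gap is in your second step. Lemma \ref{Lem=Containment} asks, for a fixed algebraic $\xi$, for a \emph{single} vector $\eta$ in the coarse bimodule with $\langle x\xi y,\xi\rangle=\langle x\eta y,\eta\rangle$ for all $x,y\in\cA$; this is a diagonal coefficient, quadratic in $\eta$. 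What the Hilbert--Schmidt property gives you directly is only a vector $\zeta_{\xi,\xi'}\in L_2(M)\otimes\overline{L_2(M)}$ with $\langle x\xi y,\xi'\rangle=\langle x\Omega_\tau\otimes\overline{y^\ast\Omega_\tau},\zeta_{\xi,\xi'}\rangle$, i.e.\ a pairing \emph{linear} in $\zeta_{\xi,\xi'}$ -- equivalently, a normal functional on $M\barotimes M^{\op}$. Declaring ``the vector associated to $T_{\xi,\xi'}$, corrected by $a_n,b_n$'' to be $\eta$ does not work: under the identification $\cS_2(L_2(M))\cong L_2(M)\otimes L_2(M)$ the coarse coefficient of the vector attached to an operator $S$ is $\Tr\bigl(S^\ast\, x\, S\, y^{\op}\bigr)$, which is quadratic in $S$, whereas your expression $\tau\bigl(y\,b_n^\ast T(x)a_n\bigr)$ is linear in $T$; these are different kinds of objects and no choice of ``correction'' by $a_n,b_n$ converts one into the other. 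For the same reason the claim that the cross terms of a general algebraic $\xi=\sum_i\xi^{(i)}$ ``assemble'' into a diagonal coefficient of some $\eta$ in a finite multiple of the coarse bimodule is precisely the point at issue, not a consequence of sesquilinearity: a finite sum of off-diagonal coefficients $\langle x\,\cdot\,y,\,\cdot\,\rangle$ with distinct left and right vectors is a vector state only if the resulting functional is positive.

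The missing ingredient, and the way the paper closes the argument, is positivity together with the standard form. One first notes (Remark \ref{Rmk=Positive}) that $\rho(x\otimes y^{\op}):=\langle x\xi y,\xi\rangle$ is positive on $\cA\otimes_{\rm alg}\cA^{\op}$; the $\cS_2$ estimate shows $\rho$ extends to a \emph{normal} functional on $M\barotimes M^{\op}$ (here one uses that $y^{\op}\mapsto\overline{y^\ast}$ is a $\ast$-isomorphism onto $\overline{M}$); a Kaplansky density argument then upgrades positivity from $\cA\otimes_{\rm alg}\cA^{\op}$ to all of $M\barotimes M^{\op}$; and finally, since $L_2(M)\otimes L_2(M)$ is the standard Hilbert space of $M\barotimes M^{\op}$, every positive normal functional is a vector state, which produces the required $\zeta_\xi$ with $\langle x\xi y,\xi\rangle=\langle x\zeta_\xi y,\zeta_\xi\rangle$. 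Only at that point can Lemma \ref{Lem=Containment} be invoked. You flag this conversion as ``the most delicate point,'' but the proposal offers no argument for it, and the mechanism you sketch would not supply one.
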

 \begin{proof}
 	Take $a_0, a_1, \ldots,  a_n, b_0, b_1, \ldots, b_n \in \cA$.
 	Consider vectors $\alpha := a_0 \otimes \ldots \otimes a_{n-1} \otimes a_n \Omega_\tau, \beta := b_0 \otimes \ldots \otimes b_{n-1} \otimes b_n \Omega_\tau  \in L_2(M)_{\nabla^{(n)}}$.
For  $x, y \in \cA$ we get from Lemma \ref{Lem=GradientComp} that,
 	\[
 	\begin{split}
 	\langle x \cdot \alpha \cdot y, \beta \rangle
 	= & \langle x \cdot (  a_0 \notimes \ldots \notimes a_{n-1} \notimes a_n \Omega_\tau ) \cdot y,  b_0 \notimes \ldots \notimes b_{n-1} \notimes b_n \Omega_\tau \rangle  \\
 = &
 	\langle
 	\Psi^{b_{n-1}^\ast, a_{n-1}} \circ   \ldots \circ  \Psi^{b_1^\ast, a_1} \circ  \Psi^{b_0^\ast, a_0}(x)  a_n \Omega_\tau y, b_n \Omega_\tau  \rangle \\
 	= &  \langle b_n^{\ast}
 	\Psi^{b_{n-1}^\ast, a_{n-1}} \circ   \ldots \circ  \Psi^{b_1^\ast, a_1} \circ  \Psi^{b_0^\ast, a_0}(x)  a_n \Omega_\tau  ,  \Omega_\tau y^\ast \rangle \\
 	= & \langle b_n^{\ast}
 	\Psi^{b_{n-1}^\ast, a_{n-1}} \circ   \ldots \circ  \Psi^{b_1^\ast, a_1} \circ  \Psi^{b_0^\ast, a_0}(x)  a_n \Omega_\tau  ,  y^\ast \Omega_\tau \rangle,
 	\end{split}
 	\]
 	where the last equality uses the fact that $\Omega_{\tau} y^{\ast} = y^{\ast} \Omega_{\tau}$.
 	
 	Since each $\Psi^{b_j^\ast, a_j}$ is an element of $\cS_p$ with $p = 2n$ we find that
 	\[
  \Psi :=	\Psi^{b_{n-1}^\ast, a_{n-1}} \circ   \ldots \circ  \Psi^{b_1^\ast, a_1} \circ  \Psi^{b_0^\ast, a_0} \in \cS_2.
 	\]
 	Then also $x \Omega_\tau \mapsto  b_n^\ast \Psi(x) a_n\Omega_\tau$ is in $\cS_2$.
 	Therefore, there exists $\zeta_{\alpha, \beta}  \in  L_2(M) \otimes \overline{ L_2(M) }$   such that,
 	 	\begin{equation}\label{Eqn=MinBd}
 	 	 	\langle x \cdot \alpha \cdot y, \beta \rangle =
 	\langle x \Omega_\tau \otimes  \overline{ y^{\ast}} \overline{\Omega_{\tau}},  \zeta_{\alpha, \beta}   \rangle
 	 	\end{equation}
Since the map $M^{\op} \ni y^{\op} \mapsto \overline{y^{\ast}} \in \overline{M}$ is a $\ast$-isomorphism, we deduce that assigning \eqref{Eqn=MinBd}  to $x \otimes y^{\op} \in M \otimes_{{\rm alg}} M^{\op}$ is $\minotimes$-bounded and moreover normal.

 	
 	Now take   any algebraic $\alpha \in L^2(M)_{\nabla^{(n)}}$. The previous paragraph shows that the map $\rho: M \otimes M^{\op}: x \otimes y^{\op} \rightarrow \langle x \cdot \alpha \cdot y, \alpha \rangle$ extens to $M \barotimes M^{\op}$. Moreover, $\rho$ is positive on $\cA \otimes_{{\rm alg}} \cA^{\op} \rightarrow \mathbb{C}$, see Remark \ref{Rmk=Positive}.   We claim that $\rho$ is also a positive map $M \barotimes M^{\op} \rightarrow \mathbb{C}$. Indeed, take $z \in M \barotimes M^{\op}$ positive and write $z = d^\ast d, d \in M \barotimes M^{\op}$. By Kaplansky's density theorem, let $d_j \in \cA \otimes \cA^{\op}$ be a bounded net converging strongly to $d$. Then $d_j^\ast d_j \rightarrow d^\ast d = z$ weakly and hence $\sigma$-weakly since these topologies coincide on the unit ball. Since $\rho$ is normal (i.e. $\sigma$-weakly continuous), we get $0 \leq \rho(d_j^\ast d_j) \rightarrow \rho(z)$.   Since $L_2(M) \otimes L_2(M)$ is the standard Hilbert space for $M \barotimes M^{\op}$ there exists $\zeta_{\alpha} \in L_2(M) \otimes L_2(M)$ such that $\langle x \cdot \alpha \cdot y, \alpha \rangle = \langle x \cdot \zeta_{\alpha} \cdot y, \zeta_\alpha \rangle$.   As algebraic vectors in $L^2(M)_{\nabla^{(n)}}$ form a dense subspace, we conclude by Lemma \ref{Lem=Containment}.
 \end{proof}

 \begin{cor}\label{Cor=GradientCoarseConsequence}
Suppose that the quantum Markov semi-group $(\Phi_t)_{t \geq 0}$ is gradient-$\cS_2$; then it is gradient coarse.
 \end{cor}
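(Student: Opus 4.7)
The plan is to deduce the corollary as an essentially immediate consequence of Theorem \ref{Thm=IGHS-GC} in the base case $n=1$. Indeed, for $n=1$ the hypothesis $\cS_{2n} = \cS_2$ is precisely the gradient-$\cS_2$ assumption, so the theorem directly yields two pieces of information about $L_2(M)_{\nabla^{(1)}} = L_2(M)_\nabla$: first, that the $A$-$A$-bimodule actions extend to normal $M$-$M$-bimodule actions via an $A$-$A$ bimodule embedding into an infinite direct sum $\bigoplus_{i \in I} L_2(M) \otimes L_2(M)$ of copies of the coarse bimodule; and second, via that embedding, $L_2(M)_\nabla$ sits inside $\bigoplus_{i \in I} L_2(M) \otimes L_2(M)$. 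The first conclusion is exactly the normal bimodule extension required in the definition of gradient coarse.

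It then remains to promote this containment in an infinite multiple to weak containment in a single copy of the coarse bimodule. Given $\xi \in L_2(M)_\nabla$, identify $\xi$ with a vector $(\eta_i)_{i \in I} \in \bigoplus_{i \in I} L_2(M) \otimes L_2(M)$ with $\sum_i \|\eta_i\|^2 < \infty$, and observe that for all $x,y \in M$,
\[
\langle x \xi y, \xi \rangle = \sum_{i \in I} \langle x \eta_i y, \eta_i \rangle.
\]
For any $\varepsilon > 0$ and any finite set $F \subseteq M$, the absolute summability of this series (with bound $\max_{x,y \in F}\|x\|\|y\|\sum_i\|\eta_i\|^2$) means a sufficiently large finite subset $G \subseteq I$ produces $|\langle x\xi y,\xi\rangle - \sum_{j \in G}\langle x\eta_j y,\eta_j\rangle| < \varepsilon$ simultaneously for all $x,y \in F$. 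This is exactly the defining inequality of weak containment, so $L_2(M)_\nabla \preceq L_2(M) \otimes L_2(M)$.

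No real obstacle is foreseen here: the nontrivial analytic content—Kaplansky density together with the Schatten-class bound on the gradient maps $\Psi^{a,b}$—has already been absorbed in the proof of Theorem \ref{Thm=IGHS-GC}, and the passage from containment in $\bigoplus_i K$ to weak containment in $K$ is a routine finite-truncation argument in the Hilbert direct sum. The only mild care needed is to note that the normal $M$-$M$-bimodule actions on $L_2(M)_\nabla$ are the restrictions of the componentwise actions on $\bigoplus_i L_2(M) \otimes L_2(M)$, so the matrix coefficients indeed split as the displayed sum and the weak containment criterion can be invoked in the form stated in the definition.
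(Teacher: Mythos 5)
Your proposal is correct and is essentially the argument the paper intends: the corollary is stated as an immediate consequence of Theorem \ref{Thm=IGHS-GC} with $n=1$, the normal extension of the actions coming from the embedding and the weak containment following from containment in an infinite multiple of the coarse bimodule. Your finite-truncation verification of Popa's weak containment criterion simply makes explicit the standard fact (quasi-containment implies weak containment) that the paper leaves unproved.
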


 \subsection{Construction of $n$-cocycles.}
 The following proposition allows to construct cocycles at the expense of changing the bimodule. We will remedy this change by using Schatten-$\cS_p$ properties below.

 \begin{thm}  \label{Thm=Gdelta}
 	Let $H$ be an $A$-$A$-bimodule and let $f\colon \cA^{\otimes (n-1)} \rightarrow H$ be a linear map. Set $Gf\colon \cA^{\otimes n} \rightarrow H_\nabla$ by,
 	\[
 	(G f)(a_1 \otimes \ldots \otimes a_{n}) := a_1 \notimes f(a_2 \otimes \ldots \otimes a_{n}) \in H_\nabla.
 	\]
 	Then $Gd + dG=0$, where $d$'s are the differentials of the appropriate cochain complexes. In particular, $G$ maps cocycles to cocycles and coboundaries to coboundaries, so it induces a map on the level of cohomology $G_{\ast}\colon H^{n}(\cA, H) \to H^{n+1}(\cA, H_{\nabla})$.
 \end{thm}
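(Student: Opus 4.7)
The plan is a direct term-by-term computation using the definitions of $d$ and $G$, together with the key formula \eqref{Eqn=LeftAction} for the left action on $H_\nabla$, namely $x \cdot (y \notimes \xi) = xy \notimes \xi - x \notimes y\xi$. This ``commutator-like'' correction in the left action is exactly what will produce the cancellation making $G$ an anti-chain map. Since $d$ and $G$ are both linear, it suffices to evaluate $(dGf)(a_1 \otimes \cdots \otimes a_{n+1})$ and $(G df)(a_1 \otimes \cdots \otimes a_{n+1})$ on pure tensors of elements in $\cA$ and show they sum to $0$.

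First I would expand $(dGf)(a_1 \otimes \cdots \otimes a_{n+1})$. The leading term $a_1 \cdot (Gf)(a_2 \otimes \cdots \otimes a_{n+1}) = a_1 \cdot (a_2 \notimes f(a_3 \otimes \cdots))$ splits, by \eqref{Eqn=LeftAction}, into $a_1 a_2 \notimes f(a_3 \otimes \cdots \otimes a_{n+1}) - a_1 \notimes a_2 \cdot f(a_3 \otimes \cdots \otimes a_{n+1})$. The $k=1$ term in the alternating sum equals $-(Gf)(a_1 a_2 \otimes a_3 \otimes \cdots \otimes a_{n+1}) = -a_1 a_2 \notimes f(a_3 \otimes \cdots \otimes a_{n+1})$, which cancels the first summand above. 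For $k \geq 2$ the formula for $G$ leaves $a_1$ alone, so these terms become $(-1)^k a_1 \notimes f(a_2 \otimes \cdots \otimes a_k a_{k+1} \otimes \cdots \otimes a_{n+1})$. The final term gives, via \eqref{Eqn=RightAction}, $(-1)^{n+1}\, a_1 \notimes f(a_2 \otimes \cdots \otimes a_n) \cdot a_{n+1}$.

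The next step is to collect the surviving terms, factor out $-a_1 \notimes$, and perform the index shift $j = k-1$ in the middle sum. The sign change absorbs the extra factor $-1$, and the last term's sign turns $(-1)^{n+1}$ into $(-1)^n$, matching the definition of $d$ applied to the $n$-tuple $(a_2, \ldots, a_{n+1})$. Thus
\[
(dGf)(a_1 \otimes \cdots \otimes a_{n+1}) = -\, a_1 \notimes (df)(a_2 \otimes \cdots \otimes a_{n+1}) = -(Gdf)(a_1 \otimes \cdots \otimes a_{n+1}),
\]
which is the desired identity $Gd + dG = 0$.

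The ``in particular'' assertions are formal consequences. If $f \in C^n(\cA, H)$ then $df = 0$, hence $d(Gf) = -G(df) = 0$, so $Gf \in C^{n+1}(\cA, H_\nabla)$. If $f = dg$ is a coboundary, then $Gf = G(dg) = -d(Gg)$ lies in $B^{n+1}(\cA, H_\nabla)$. Hence $G$ descends to a well-defined map $G_* \colon H^n(\cA, H) \to H^{n+1}(\cA, H_\nabla)$ on cohomology. No genuine obstacle is anticipated; the only point requiring care is tracking signs through the index shift, and verifying that the two $a_1 a_2 \notimes f(a_3 \otimes \cdots)$ contributions (one from the left-action identity, one from the $k=1$ summand) cancel, which is what forces the anti-commutation rather than commutation.
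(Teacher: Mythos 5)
Your computation is correct and follows essentially the same route as the paper: a direct term-by-term expansion in which the twisted left action \eqref{Eqn=LeftAction} produces the cancellation of the $a_1a_2 \notimes f(\cdots)$ terms, yielding $d(Gf) = -G(df)$, with the cohomology statement as a formal consequence. No gaps.
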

 \begin{proof}
 	 Let us first compute $G(df)$. We have
\begin{align*}
df(a_1\otimes \dots\otimes a_n) &= a_1 f(a_2\otimes \dots\otimes a_n) + \sum_{k=1}^{n-1} (-1)^{k} f(a_1\otimes \dots \otimes a_k a_{k+1}\otimes \dots \otimes a_n)  \\
&+ (-1)^n f(a_1\otimes\dots\otimes a_{n-1})a_n,
\end{align*}
hence
\begin{align*}
G(df)(a_0\otimes\dots\otimes a_n) &= a_0\otimes_{\nabla} a_1 f(a_2\otimes \dots\otimes a_n) + \sum_{k=1}^{n-1} (-1)^k a_0\otimes_{\nabla} f(a_1\otimes\dots\otimes a_k a_{k+1}\otimes\dots\otimes a_n) \\
&+(-1)^n a_0\otimes_{\nabla} f(a_1\otimes\dots\otimes a_{n-1})a_n.
\end{align*}
On the other hand:
\begin{align*}
-d(Gf)(a_0\otimes\dots\otimes a_n)&=-a_0(a_1\otimes_{\nabla} f(a_2\otimes\dots\otimes a_n))+a_0a_1\otimes_{\nabla} f(a_2\otimes\dots\otimes a_n) \\
&+ \sum_{k=1}^{n-1} (-1)^{k} a_{0}\otimes_{\nabla} f(a_1\otimes\dots\otimes a_k a_{k+1}\otimes\dots\otimes a_n) \\
&+ (-1)^n a_{0}\otimes_{\nabla} f(a_1\otimes\dots\otimes a_{n-1}) a_n.
\end{align*}
Note that
\[
-a_0(a_1\otimes_{\nabla} f(a_2\otimes\dots\otimes a_n)) = -a_0 a_1 \otimes_{\nabla} f(a_2\otimes\dots\otimes a_n) + a_0\otimes_{\nabla} a_1 f(a_2\otimes \dots\otimes a_n).
\]
It follows that $G(df) = -d(Gf)$, i.e. $dG+Gd=0$.
 \end{proof}

 \begin{cor}\label{Cor=Derivation}
 	The map,
 	\[
 	\partial_n: \cA^{\otimes n} \rightarrow L_2(M)_{\nabla^{(n)}}: a_1 \otimes \ldots \otimes a_n \mapsto a_1 \notimes \ldots \notimes a_n \notimes \Omega_\tau.
 	\]
 	defines an $n$-cocycle, i.e. an element in $C^n(\cA, L_2(M)_{\nabla^{(n)}})$.
 \end{cor}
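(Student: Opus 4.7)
The plan is to derive the corollary as an immediate consequence of Theorem \ref{Thm=Gdelta} by viewing the maps $\partial_n$ as successive applications of the operator $G$ starting from the vacuum vector $\Omega_\tau$, so the whole argument reduces to checking the base case.

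First, I would set up the base case. Interpret $\cA^{\otimes 0}$ as $\mathbb{C}$, so that an element of $F_0$ with coefficients in $L_2(M)$ is just a choice of vector $\xi \in L_2(M)$, and the differential $d_0$ sends $\xi$ to the map $a \mapsto a\xi - \xi a$. Taking $\xi = \Omega_\tau$, the tracial property gives $a \Omega_\tau = \Omega_\tau a$ for every $a \in \cA$ (this uses that in the tracial standard form, right multiplication by $a$ on $\Omega_\tau$ equals left multiplication, via $\Omega_\tau \cdot a = J a^\ast J \Omega_\tau = a \Omega_\tau$). Hence $d_0 \Omega_\tau = 0$ and $\Omega_\tau$ is a $0$-cocycle with coefficients in the trivial bimodule $L_2(M)$.

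Next, I would carry out the induction. By construction,
\[
(G \partial_{n-1})(a_1 \otimes \ldots \otimes a_n) = a_1 \notimes \partial_{n-1}(a_2 \otimes \ldots \otimes a_n) = a_1 \notimes a_2 \notimes \ldots \notimes a_n \notimes \Omega_\tau,
\]
so $\partial_n = G \partial_{n-1}$, where $G$ is applied with the bimodule $H = L_2(M)_{\nabla^{(n-1)}}$. Iterating, $\partial_n = G^n \Omega_\tau$ under the convention $\partial_0 = \Omega_\tau$. Applying the identity $dG + Gd = 0$ from Theorem \ref{Thm=Gdelta} at each level, one gets
\[
d_n \partial_n = d_n G \partial_{n-1} = - G \, d_{n-1} \partial_{n-1},
\]
and by the inductive hypothesis $d_{n-1} \partial_{n-1} = 0$, hence $d_n \partial_n = 0$. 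Equivalently, $d_n \partial_n = (-1)^n G^n d_0 \Omega_\tau = 0$.

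There is no real obstacle here: the substantive content has been packaged into Theorem \ref{Thm=Gdelta}, and the only verification needed beyond that is the trivial traciality check for the base case. The one small point to be careful about is the identification of $\partial_0$ with $\Omega_\tau$ and matching up the grading $G \colon F_{n-1}(H) \to F_n(H_\nabla)$ with the indexing of $d_n$, so that the signs in $dG + Gd = 0$ line up with the induction.
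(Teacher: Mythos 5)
Your proposal is correct and follows essentially the same route as the paper: the paper likewise deduces the cocycle property by applying Theorem \ref{Thm=Gdelta} inductively, the only cosmetic difference being that it anchors the induction at $n=1$ by checking the Leibniz rule $\partial_1(ab)=a\cdot\partial_1(b)+\partial_1(a)\cdot b$ directly, whereas you anchor it at $n=0$ via $d_0\Omega_\tau=0$ (traciality), which amounts to the same thing since the identity $dG+Gd=0$ indeed covers that bottom degree.
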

\begin{proof}
For $n=1$ we get $\partial_1(ab) = ab \notimes \Omega_\tau = a \cdot (b \notimes \Omega_\tau) +
 (a \notimes \Omega_\tau) \cdot b = a \partial_1(b) + \partial_1(a) b$ and so the corollary follows. For higher $n$ use Theorem \ref{Thm=Gdelta} inductively.
\end{proof}

\subsection{The Haagerup averaging technique}  We will show that under algebraic conditions gradient-$\cS_p$ properties behave well under Haagerup averaging techniques. We believe this result is of independent interest, though we will not use it in the subsequent sections.

 Let $\Lambda$ be a discrete group with group algebra $\mathbb{C}[\Lambda]$ and von Neumann algebra $(\mathcal{L}(\Lambda), \tau)$. We will identify $\Lambda$ with its image under the left regular representation. So for $\gamma \in \Lambda$ we will also write $\gamma$ for the associated unitary in $\mathcal{L}(\Lambda)$.  Let $(\Phi_t)_{t \geq 0}$ be a quantum Markov semi-group on $\mathcal{L}(\Lambda)$. Consider the map,
\begin{equation}\label{Eqn=Average}
\Psi_t = \mathbb{E} \circ  (\Phi_t \otimes \id) \circ \delta_\Lambda
\end{equation}
where $\delta_\Lambda: \mathcal{L}(\Lambda) \rightarrow  \mathcal{L}(\Lambda) \barotimes \mathcal{L}(\Lambda)$ is the comultiplication $\delta_\Lambda: \gamma \mapsto \gamma \otimes \gamma$ and $\mathbb{E}$ is the conditional expectation of $\mathcal{L}(\Lambda) \barotimes \mathcal{L}(\Lambda)$ onto the image of $\delta_\Lambda$ (post-composed with the inverse of $\delta_{\Lambda}$ so that the range is contained in $\mathcal{L}(\Lambda)$). Then $\Psi_t$ is a completely positive Fourier multiplier with symbol $\varphi_t(\gamma) = \tau( \Phi_t(\gamma) \gamma^{-1}), \gamma \in \Gamma$.

\begin{lem}\label{Lem=Average}
	Let $\Psi_t: \cL(\Lambda) \rightarrow \cL(\Lambda), t \geq 0$ be unital completely positive Fourier multipliers (not necessarily a semi-group) and suppose that for every $\gamma \in \Lambda$ the limit $\Delta_0^\Psi(\gamma) :=  \lim_{t \searrow 0} \frac{1}{t} (\gamma -  \Psi_t(\gamma))$ exists. Then $\Delta_0^\Psi$ defines a preclosed (unbounded) operator on $\ell_2(\Gamma)$ with closure $\Delta^\Psi$. Moreover, $\exp(-t \Delta^\Psi)$ is a quantum Markov semi-group.
\end{lem}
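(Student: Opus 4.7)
The plan is to exploit the fact that Fourier multipliers are simultaneously diagonalized by the group basis $\{\gamma\}_{\gamma \in \Lambda}$ of $\ell_2(\Lambda)$. Each $\Psi_t$ has a positive definite symbol $\psi_t \colon \Lambda \to \mathbb{C}$ with $\psi_t(e) = 1$ and $\Psi_t(\gamma) = \psi_t(\gamma)\gamma$, so the assumption that $t^{-1}(\gamma - \Psi_t(\gamma))$ converges as $t \searrow 0$ is equivalent to the pointwise existence of
\[
\phi(\gamma) := \lim_{t \searrow 0} \frac{1 - \psi_t(\gamma)}{t},
\]
and on the dense subspace $\mathbb{C}[\Lambda] \subset \ell_2(\Lambda)$ the operator $\Delta_0^\Psi$ is diagonal, acting by $\Delta_0^\Psi \gamma = \phi(\gamma)\gamma$.

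The first step is to extract the algebraic structure of $\phi$. Since each $\psi_t$ is positive definite and normalized, the function $1 - \psi_t$ is conditionally negative definite and vanishes at $e$, and this class is closed under pointwise limits of nonnegative scalar multiples; hence $\phi$ is conditionally negative definite with $\phi(e) = 0$. In the situation targeted by the lemma, where $\Psi_t$ comes from the averaging \eqref{Eqn=Average} of a symmetric ucp map, the trace identity $\tau(\Phi_t(\gamma)\gamma^{-1}) = \tau(\Phi_t(\gamma^{-1})\gamma)$ together with self-adjointness of $\Phi_t$ forces $\psi_t$ to be real and symmetric, so $\phi$ is real and symmetric as well; the estimate $|\psi_t(\gamma)| \leq 1$ then yields $\phi(\gamma) \geq 0$.

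Preclosedness is now immediate: a diagonal operator on the linear span of an orthonormal basis is always closable, because its formal adjoint, namely diagonal multiplication by $\overline{\phi(\gamma^{-1})}$, is densely defined on the same span. Define $\Delta^\Psi$ to be the closure; it is the positive self-adjoint multiplication operator with spectrum $\{\phi(\gamma) : \gamma \in \Lambda\}$ on its natural maximal domain $\{\xi \in \ell_2(\Lambda) : \sum_\gamma |\phi(\gamma)|^2 |\hat{\xi}(\gamma)|^2 < \infty\}$.

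The semigroup statement then follows from Schoenberg's theorem: since $\phi$ is conditionally negative definite with $\phi(e) = 0$, each $e^{-t\phi}$ is a normalized positive definite function on $\Lambda$, hence the symbol of a unital completely positive Fourier multiplier $\Phi_t^\Psi$ on $\cL(\Lambda)$ whose action on $\ell_2(\Lambda)$ coincides with $e^{-t\Delta^\Psi}$. Trace-preservation, symmetry and strong continuity in $t$ then follow from the reality and symmetry of $\phi$ combined with standard spectral calculus. The most delicate step I expect is the justification that $\phi$ is real and symmetric (and not merely conditionally negative definite): for an abstract family of ucp Fourier multipliers this need not hold, but in the averaging setting \eqref{Eqn=Average} it is forced by the symmetry and tracial invariance of the original semigroup $(\Phi_t)_{t \geq 0}$.
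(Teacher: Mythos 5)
Your proposal is correct and follows essentially the same route as the paper's short proof: the paper also reduces to the symbol level, regards $\lim_{t\searrow 0}\frac{1}{t}(\mu_t-\varepsilon)$ as a generating functional on $\mathbb{C}[\Lambda]$ and exponentiates it to a convolution semi-group of states, which is precisely the Schoenberg-type step you carry out with conditionally negative definite functions, while preclosedness is in both cases immediate from diagonality with respect to the basis $(\gamma)_{\gamma\in\Lambda}$. Your closing caveat that reality and symmetry of the limit symbol (needed for the symmetry built into the paper's definition of a quantum Markov semi-group) is not automatic for an abstract family of ucp Fourier multipliers but is forced in the averaging setting by the symmetry of $(\Phi_t)_{t\geq 0}$ addresses a point the paper's proof leaves implicit, so it is a refinement rather than a gap (only note that the formal adjoint of the diagonal operator has entries $\overline{\phi(\gamma)}$, not $\overline{\phi(\gamma^{-1})}$, which does not affect closability).
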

\begin{proof}
	Since $\Delta_0^\Psi$ acts diagonally on $\ell_2(\Lambda)$ with respect to the basis $\gamma \in \Lambda$ it is preclosed. The remaining statements follow by following the constructions in \cite{JolissaintMartin} and \cite[Proposition 5.5]{CaspersSkalskiCMP}. Alternatively, there is the following short proof. Let $\varphi_t$ be the symbol of the Fourier multiplier $\Psi_t$ which is positive definite and $\varphi_t(e) = 1$. Consider the state $\mu_t$ on $\mathbb{C}[\Lambda]$ that maps $\gamma \in \Lambda$ to $\varphi_t(\gamma)$. Then $\Delta_0(\gamma) := \lim_{t \searrow 0} \frac{1}{t} ( \mu_t - \varepsilon )$ defines a generating functional on $\mathbb{C}[\Lambda]$ where $\varepsilon(\gamma) = 1, \gamma \in \Lambda$ is the counit (see \cite[Section 6.2]{DFSW} for generating functionals). Therefore $\nu_t := \exp(-t \Delta_0)$ defines a convolution semi-group of states on $\mathbb{C}[\Lambda]$. Since the Fourier multiplier $\Psi_t =   (\id \otimes \nu_t)\circ \delta_\Lambda$ is a trace preserving ucp map it extends from $\mathbb{C}[\Lambda]$ to a quantum Markov semi-group on $\mathcal{L}(\Lambda)$ with generator $\Delta^{\Psi}$.
\end{proof}

\begin{lem}
	 Let $(\Phi_t)_{t \geq 0}$ be a quantum Markov semi-group on $\mathcal{L}(\Gamma)$ with generator $\Delta$  and assume $\mathbb{C}[\Gamma]$ is in the domain of $\Delta$.
	Consider  $\Psi_t$ as in \eqref{Eqn=Average} and let  $\Delta^\Psi$ be as in Lemma \ref{Lem=Average}. We have,
	\[
	\Delta^\Psi(\gamma)   =  \frac{d}{dt}\vert_{t = 0} \langle \Phi_t(\gamma) , \gamma \rangle  \gamma = \langle \Delta(\gamma), \gamma \rangle  \gamma, \qquad \gamma \in \Gamma \subseteq \mathbb{C}[\Gamma].
	\]
\end{lem}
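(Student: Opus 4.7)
The plan is to unfold the definition \eqref{Eqn=Average} of $\Psi_t$, identify it explicitly as a Fourier multiplier, and then differentiate at $t = 0$ using the $L_2$-differentiability guaranteed by $\mathbb{C}[\Gamma] \subseteq \dom(\Delta)$.

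First, for $\gamma \in \Lambda$ I would use $\delta_\Lambda(\gamma) = \gamma \otimes \gamma$ to obtain $(\Phi_t \otimes \id)(\delta_\Lambda(\gamma)) = \Phi_t(\gamma) \otimes \gamma$. The conditional expectation $\mathbb{E}$ onto the image of $\delta_\Lambda$ is characterised as the $L_2$-orthogonal projection, and since $\{\eta \otimes \eta : \eta \in \Lambda\}$ is an orthonormal basis of the $L_2$-closure of $\delta_\Lambda(\mathcal{L}(\Lambda))$, one obtains
\[
\mathbb{E}(\Phi_t(\gamma) \otimes \gamma) = \sum_{\eta \in \Lambda} \tau(\eta^{-1} \Phi_t(\gamma))\, \tau(\eta^{-1} \gamma)\, \eta = \tau(\gamma^{-1} \Phi_t(\gamma))\, \gamma,
\]
where only $\eta = \gamma$ survives and I have already post-composed with $\delta_\Lambda^{-1}$. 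Hence $\Psi_t(\gamma) = \varphi_t(\gamma)\, \gamma$ with $\varphi_t(\gamma) := \langle \Phi_t(\gamma), \gamma \rangle$, so $\Psi_t$ is a Fourier multiplier and $\varphi_0(\gamma) = 1$.

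Next, I would evaluate
\[
\Delta_0^\Psi(\gamma) \;=\; \lim_{t \searrow 0} \frac{\gamma - \Psi_t(\gamma)}{t} \;=\; \lim_{t \searrow 0} \frac{1 - \varphi_t(\gamma)}{t}\, \gamma.
\]
The hypothesis $\mathbb{C}[\Gamma] \subseteq \dom(\Delta)$ combined with $\Phi_t^{(2)} = \exp(-t \Delta)$ implies that $t \mapsto \Phi_t(\gamma) \Omega_\tau$ is norm-differentiable at $t = 0$ in $L_2(\mathcal{L}(\Lambda))$ with derivative $-\Delta(\gamma) \Omega_\tau$. Therefore $\varphi_t(\gamma) = \langle \Phi_t(\gamma) \Omega_\tau, \gamma \Omega_\tau \rangle$ is differentiable at $0$ with $\varphi_0'(\gamma) = -\langle \Delta(\gamma), \gamma \rangle$, and consequently $\Delta^\Psi(\gamma) = -\varphi_0'(\gamma)\, \gamma = \langle \Delta(\gamma), \gamma \rangle\, \gamma$.

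No substantive obstacle is expected: once the conditional expectation is computed on basic tensors the proof reduces to scalar calculus on the symbol $\varphi_t$, and the required $L_2$-differentiability at $t = 0$ is built into the domain assumption on $\Delta$. The only care needed is with the signs arising from the convention $\Phi_t^{(2)} = \exp(-t \Delta)$ when differentiating $\langle \Phi_t(\gamma), \gamma \rangle$.
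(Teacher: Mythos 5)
Your proposal is correct and is essentially the paper's argument spelled out in full: the paper dismisses this lemma as ``clear from the definitions,'' and what you do --- compute $\mathbb{E}(\Phi_t(\gamma)\otimes\gamma)$ on basic tensors to identify the symbol $\varphi_t(\gamma)=\langle\Phi_t(\gamma),\gamma\rangle$, then differentiate at $t=0$ using $\mathbb{C}[\Gamma]\subseteq\dom(\Delta)$ and standard semigroup theory --- is exactly that unfolding. Your attention to the sign from $\Phi_t^{(2)}=\exp(-t\Delta)$ is also right, and it lands you on the correct final identity $\Delta^\Psi(\gamma)=\langle\Delta(\gamma),\gamma\rangle\gamma$.
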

\begin{proof}
	This is clear from the definitions.
	\end{proof}

The following theorem shows that under a domain condition on $\Delta$ we may average a quantum Markov semi-group to a semi-group of Fourier multipliers while retaining the property of being gradient-$\cS_p$.

\begin{thm}
	 Let $(\Phi_t)_{t \geq 0}$ be a quantum Markov semi-group on $\mathcal{L}(\Gamma)$ with generator $\Delta$   and  assume $\mathbb{C}[\Gamma]$ is in the domain of $\Delta$. If $(\Phi_t)_{t \geq 0}$ is gradient-$\cS_p$ then so is $\exp(-t \Delta^\Psi)$.
\end{thm}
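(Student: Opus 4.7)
The plan is to realize $\Psi^{a,b}_{\Psi}$ as a pinching of $\Psi^{a,b}_{\Phi}$ and then invoke the standard fact that pinchings onto a MASA are $\cS_p$-contractive. Throughout, I would work on $L_2(\cL(\Gamma)) \cong \ell_2(\Gamma)$ with the orthonormal basis $\{\gamma\Omega_\tau\}_{\gamma\in\Gamma}$, take $\cA' = \C[\Gamma]$ as the dense $\ast$-subalgebra for the averaged semi-group (Assumption 1 is immediate since the previous lemma gives $\Delta^\Psi(\gamma) = c_\gamma \gamma \in \C[\Gamma]$, with $c_\gamma := \langle\Delta(\gamma), \gamma\rangle$), and first reduce to the case $a=\alpha, b=\beta \in \Gamma$ by bilinearity of $(a,b)\mapsto \Psi^{a,b}$: if $a = \sum_\alpha a_\alpha \alpha$ and $b = \sum_\beta b_\beta \beta$ are finite combinations, then $\Psi^{a,b}_\Psi = \sum_{\alpha,\beta} a_\alpha b_\beta \Psi^{\alpha,\beta}_\Psi$.

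Next, I would compute $\Psi^{\alpha,\beta}_\Psi(\gamma\Omega_\tau)$ directly from the definition. Since $\Delta^\Psi$ acts diagonally by $\lambda \mapsto c_\lambda \lambda$, one obtains
\[
\Psi^{\alpha,\beta}_\Psi(\gamma\Omega_\tau) = f_{\alpha,\beta}(\gamma)\, \alpha\gamma\beta\,\Omega_\tau, \qquad f_{\alpha,\beta}(\gamma) = -\tfrac{1}{2}\bigl(c_{\alpha\gamma\beta} + c_\gamma - c_{\alpha\gamma} - c_{\gamma\beta}\bigr).
\]
On the other hand, expanding $\Delta$ in matrix coefficients $\Delta_{\mu,\lambda} = \langle\Delta(\lambda),\mu\rangle$ and computing the $(\alpha\gamma\beta,\gamma)$-entry of $\Psi^{\alpha,\beta}_\Phi$ gives
\[
\langle \Psi^{\alpha,\beta}_\Phi(\gamma\Omega_\tau), \alpha\gamma\beta\,\Omega_\tau \rangle = -\tfrac{1}{2}\bigl(\Delta_{\alpha\gamma\beta,\alpha\gamma\beta} + \Delta_{\gamma,\gamma} - \Delta_{\alpha\gamma,\alpha\gamma} - \Delta_{\gamma\beta,\gamma\beta}\bigr) = f_{\alpha,\beta}(\gamma),
\]
because only the diagonal entries of $\Delta$ survive at the four specific positions indexed by $\alpha\gamma\beta, \gamma, \alpha\gamma, \gamma\beta$. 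Thus $\Psi^{\alpha,\beta}_\Psi$ is obtained from $\Psi^{\alpha,\beta}_\Phi$ by retaining only the matrix entries on the graph $\{(\alpha\gamma\beta, \gamma) : \gamma \in \Gamma\}$ and zeroing out the rest.

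Finally, letting $U_{\alpha,\beta}\colon \gamma\Omega_\tau \mapsto \alpha\gamma\beta\,\Omega_\tau$ be the unitary permuting the basis, this graph-extraction can be written as $T \mapsto U_{\alpha,\beta}\, E_{\mathrm{diag}}(U_{\alpha,\beta}^\ast T)$, where $E_{\mathrm{diag}}$ is the conditional expectation of $B(\ell_2(\Gamma))$ onto the diagonal MASA. Since $E_{\mathrm{diag}}$ is $\cS_p$-contractive and unitary multiplication preserves $\cS_p$-norms, one obtains
\[
\|\Psi^{\alpha,\beta}_\Psi\|_{\cS_p} \leqslant \|\Psi^{\alpha,\beta}_\Phi\|_{\cS_p} < \infty,
\]
the finiteness being the gradient-$\cS_p$ hypothesis on $(\Phi_t)$ applied to the group elements $\alpha,\beta$. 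Summing finitely yields $\Psi^{a,b}_\Psi \in \cS_p$ for all $a,b \in \C[\Gamma]$. The only genuinely calculational step—and the main place bookkeeping could go wrong—is the matrix-entry identification in the second paragraph; everything else is formal, but this identification is exactly what forces the choice of averaging in \eqref{Eqn=Average} to interact correctly with the gradient construction.
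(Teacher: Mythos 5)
Your proposal is correct and is essentially the paper's own argument: the paper likewise reduces to group elements, writes the averaged map as $\mu_1^{-1}\Psi^{\mu_1,\mu_2}(\cdot)\mu_2^{-1}$ followed by the diagonal conditional expectation $\mathbb{D}$ on $B(\ell_2(\Gamma))$ (your $U_{\alpha,\beta}E_{\mathrm{diag}}(U_{\alpha,\beta}^\ast\,\cdot\,)$ is the same pinching), and concludes by $\cS_p$-contractivity of $\mathbb{D}$ and unitary invariance. Your matrix-entry identification and the explicit bilinearity reduction are correct and merely make explicit what the paper leaves implicit.
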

\begin{proof}
	Take $\mu_1, \mu_2 \in \Gamma$ fixed. Since $(\Phi_t)_{t \geq 0}$ is gradient-$\cS_p$ we have that the assignment
	\[
	\Psi^{\mu_1, \mu_2}: \gamma \mapsto \Delta(\mu_1 \gamma \mu_2) +  \mu_1 \Delta( \gamma ) \mu_2 - \mu_1 \Delta(\gamma \mu_2) -  \Delta(\mu_1 \gamma  ) \mu_2  	
	\]
	is in $\cS_p$ as an operator on $\ell_2(\Gamma)$. Therefore, also
		\[
	\mu_1^{-1} \Psi^{\mu_1, \mu_2}(\:\cdot \:) \mu_2^{-1}:	\gamma \mapsto \mu_1^{-1} \Delta(\mu_1 \gamma \mu_2) \mu_2^{-1} +   \Delta( \gamma )   -   \Delta(\gamma \mu_2) \mu_2^{-1} -  \mu_1^{-1} \Delta(\mu_1 \gamma  )	
	\]
	is in $\cS_p$. Let $\mathbb{D}$  be the conditional expectation of $B(\ell_2(\Gamma))$ onto the diagonal operators in $B(\ell_2(\Gamma))$. Certainly     $\mathbb{D}(\mu_1^{-1} \Psi^{\mu_1, \mu_2}(\: \cdot \:) \mu_2^{-1})$ is in $\mathcal{S}_p$ and, further,
	\[
	  \mathbb{D}(\mu_1^{-1} \Psi^{\mu_1, \mu_2}(\:\cdot \:) \mu_2^{-1})( 	\gamma) = \mu_1^{-1} \Delta^{\Psi}(\mu_1 \gamma \mu_2) \mu_2^{-1} +   \Delta^{\Psi}( \gamma )   -   \Delta^{\Psi}(\gamma \mu_2) \mu_2^{-1} -  \mu_1^{-1} \Delta^{\Psi}(\mu_1 \gamma  ).	
	\]
	Multiplying again with $\mu_1$ and $\mu_2$ shows that the assignment
	\[
	\gamma \mapsto \Delta^\Psi(\mu_1 \gamma \mu_2) +  \mu_1 \Delta^\Psi( \gamma ) \mu_2 - \mu_1 \Delta^\Psi(\gamma \mu_2) -  \Delta^\Psi(\mu_1 \gamma  ) \mu_2
	\]
	is in $\cS_p$.  	
\end{proof}

  \section{Schatten $\cS_p$-estimates for the Ornstein-Uhlenbeck semi-group on $q$-Gaussians}\label{Sect=Orhnstein-Uhlenbeck}
  We show that $q$-Gaussians $\Gamma_q(\RH)$ with the Ornstein-Uhlenbeck semi-group are gradient-$\cS_p$ for a range of $q$ depending on $p$ and the dimension of the Hilbert space $\RH$.

  \subsection{$q$-Gaussian algebras}
  Let $\RH$ be a   finite dimensional real Hilbert space with complexification $H = \RH \otimes \mathbb{C}$. We denote by $I$ the complex conjugation on $H$ and we extend it to $H^{\otimes n}$ by $I(v_1\otimes\dots\otimes v_n):= Iv_n\otimes\dots\otimes Iv_1$. Let $q \in (-1, 1)$ and consider the symmetrization operator on $H^{\otimes n}$ given by
  \[
  P_q^n(\xi_1 \otimes \ldots \otimes \xi_n) = \sum_{\sigma \in S_n}  q^{i(\sigma)} \xi_{\sigma(1)} \otimes \ldots \otimes \xi_{\sigma(n)},
  \]
  where $i(\sigma)$ is the number of inversions in $\sigma$.
  Recall that an inversion is a pair $(a,b) \in \{ 1, \ldots, n \}$ with $a < b$ and $\sigma(a) > \sigma(b)$.   For $q \in (-1, 1)$ we set new inner products on $H^{\otimes n}$ by
  \[
  \langle \xi, \eta \rangle_q =   \langle P_q^n \xi, \eta \rangle,
  \]
  and we refer to the Hilbert space with this $q$-deformed inner product as $H_q^{\otimes n}$. Also set the $q$-Fock space $F_q(H) = \mathbb{C} \Omega \oplus (\oplus_{n=1}^\infty H_q^{\otimes n})$,     with $\Omega$ a unit vector called the vacuum vector. Note that the conjugation $I$ extends to an antiunitary operator on $F_q(H)$; a typical permutation does not preserve the $q$-deformed inner product, so it is a special feature of the permutation reversing the order.
  We set the left creation operator for $\xi \in H$,
  \[
  l_q(\xi) \eta_1 \otimes \ldots \otimes \eta_n = \xi \otimes \eta_1 \otimes \ldots \otimes \eta_n,
  \]
  and the annihilation operator $l_q^\ast(\xi) = l_q(\xi)^\ast$.  These operators are bounded and extend to $F_q(H)$ for $q \in (-1, 1)$. We define the {\bf $q$-Gaussian algebra} as
  \[
  \Gamma_q := \Gamma_q(\RH) := \cA_q'', \qquad \cA_q := \ast{\rm -alg} \{ \: l_q(\xi) + l_q(\xi)^\ast \mid \xi \in \RH  \}.
  \]
  We let $\varphiO$ be the (tracial) vacuum state on $\Gamma_q(\RH)$ given by $\varphiO(x) = \langle x \Omega, \Omega \rangle$. $\Omega$ is a separating and cyclic vector for $\Gamma_q(\RH)$ and so   $\varphiO$ is faithful and $F_q(H)$ is the standard Hilbert space of   $\Gamma_q$. In this setting $\Omega$ is a tracial vector and further $\Gamma_q$ is a II$_1$-factor \cite{RicardCMP}.

 For vectors $\xi_1, \ldots, \xi_n \in H$ there exists a unique operator $W_q(\xi_1 \otimes \ldots \otimes \xi_n) \in \Gamma_q(\RH)$ such that,
 \[
 W_q(\xi_1 \otimes \ldots \otimes \xi_n) \Omega = \xi_1 \otimes \ldots \otimes \xi_n.
 \]
  We call these operators {\bf elementary Wick operators} or {\bf elementary Wick words}.
 The {\bf Ornstein-Uhlenbeck} quantum Markov semi-group is then defined by\footnote{It extends to the $q$-Gaussian algebra by \cite[Theorem 2.11]{BKS}.}
 \[
 \Phi_t ( W_q(\xi_1 \otimes \ldots \otimes \xi_n)) = e^{-t n}  W_q(\xi_1 \otimes \ldots \otimes \xi_n).
 \]
 It comes with a generator (the quantum Dirichlet form) given by the number operator
 \[
 \Delta (\xi_1 \otimes \ldots \otimes \xi_n) = n \xi_1 \otimes \ldots \otimes \xi_n.
 \]
 We determine the cohomology as follows.

 \begin{thm}\label{CartanEilenberg}
 	For any $\cA_q$-$\cA_q$-bimodule $H$ we have $H^n(\cA_q, H) = 0$ for $n \geq 2$.
 \end{thm}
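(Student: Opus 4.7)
The plan is to identify $\cA_q$ with a free associative algebra (a tensor algebra) and then invoke the classical fact that Hochschild cohomology of a tensor algebra vanishes in all degrees $n\geq 2$ for arbitrary coefficient bimodules.

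First I would show that $\cA_q$ is isomorphic, as a complex algebra, to the tensor algebra $T(V)$ on the finite dimensional complex vector space
\[
V := \spa_{\C}\{X_\xi \mid \xi \in \RH\} \subseteq \cA_q, \qquad X_\xi := l_q(\xi) + l_q(\xi)^\ast.
\]
The universal property of $T(V)$ yields a surjective $\C$-algebra homomorphism $\pi\colon T(V) \twoheadrightarrow \cA_q$ extending the inclusion $V \hookrightarrow \cA_q$, and injectivity is the substantive point. I would derive it from the Wick calculus: a direct induction shows that for any $\xi_1,\ldots,\xi_n \in \RH$,
\[
X_{\xi_1} X_{\xi_2}\cdots X_{\xi_n} = W_q(\xi_1 \otimes \cdots \otimes \xi_n) + (\text{Wick operators of strictly smaller degree}).
\]
Because $W_q(\xi_1 \otimes \cdots \otimes \xi_n)\Omega = \xi_1 \otimes \cdots \otimes \xi_n$ and the elementary tensors of all orders are linearly independent in $F_q(H)$ (the $q$-deformed inner product is positive definite for $|q|<1$), and $\Omega$ is separating for $\Gamma_q(\RH)$, the Wick operators are linearly independent across all degrees. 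The triangular comparison then forces the monomials $X_{\xi_1}\cdots X_{\xi_n}$, of arbitrary length, to be linearly independent, which is exactly the injectivity of $\pi$.

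Second I would invoke the standard Koszul-type projective bimodule resolution of a tensor algebra:
\[
0 \longrightarrow T(V) \otimes V \otimes T(V) \xrightarrow{d_1} T(V) \otimes T(V) \xrightarrow{d_0} T(V) \longrightarrow 0,
\]
with $d_0(a \otimes b) = ab$ and $d_1(a \otimes v \otimes b) = av \otimes b - a \otimes vb$. The two leftmost terms are free $T(V)$-bimodules, and exactness is a direct check; the construction is classical and already appears in \cite{CartanEilenberg}. Since Hochschild cohomology with coefficients in an arbitrary bimodule is computed by ${\rm Ext}$ over the enveloping algebra $T(V) \otimes T(V)^{\op}$, a projective bimodule resolution of length $1$ immediately gives $H^n(T(V), H) = 0$ for every $n\geq 2$ and every bimodule $H$. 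Transporting the vanishing through the isomorphism $\cA_q \cong T(V)$ finishes the proof.

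The only real obstacle lies in the first step: one has to rule out any hidden algebraic relations among the generators $X_\xi$. The cleanest route I see is the triangular comparison with Wick operators described above, which relies on the positive-definiteness of the $q$-deformed inner product for $|q|<1$ and on $\Omega$ being separating for $\Gamma_q(\RH)$. Once freeness is in place the homological conclusion is completely formal and uniform in the coefficient bimodule.
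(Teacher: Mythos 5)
Your proposal is correct and follows essentially the same route as the paper: identify $\cA_q$ algebraically with the tensor algebra $T(V)$ on an $m$-dimensional space via the triangular comparison of monomials with Wick words (using the Wick product formula and the fact that Wick operators applied to $\Omega$ give the linearly independent tensors of $F_q(H)$), then invoke the length-one projective bimodule resolution of a free algebra \cite{CartanEilenberg}, \cite{Kassel} to kill $H^n$ for $n\geq 2$. The only nitpick is phrasing: the monomials $X_{\xi_1}\cdots X_{\xi_n}$ for \emph{arbitrary} $\xi_i\in \RH$ are not linearly independent (the map $\xi\mapsto X_\xi$ is linear), so the triangularity argument should be stated for monomials in a fixed basis of $V$, which is exactly what injectivity of $\pi$ requires and what the paper does with an orthonormal basis $e_1,\ldots,e_m$.
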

\begin{proof}
 Let $e_1, \ldots, e_m$ be an orthonormal basis of $\RH$.
We claim that algebraically $\cA_q$ is isomorphic to the $\ast$-algebra of non-commutative polynomials in $m$ self-adjoint variables $X_1, \ldots X_m$ by the identification  $X_i := W_q(e_i)$.   Indeed, let $P$ be a non-commutative polynomial and suppose that $P(X_1, \ldots, X_m) = 0$.
Let  $Q(X_1, \ldots, X_n) = X_{i_1} \ldots X_{i_d}$ be a monomial occuring in $P$ of highest degree $d$.  Then, from Lemma \ref{Lem=WickProduct} below, we find $Q(X_1, \ldots, X_n) = W_q(X_{i_1} \otimes \ldots \otimes X_{i_d}) + A$ where $A$ is a linear combination of Wick operators $W_q(\xi_1 \otimes \ldots \otimes \xi_k)$ of tensors of order $k < d$. If $P(X_1, \ldots, X_m) = 0$ we must therefore have that $Q(X_1, \ldots, X_m) =0$. By induction we conclude that $P =0$.

Now in \cite[p. 192]{CartanEilenberg} it is shown that for the algebra of non-commutative polynomials,  the Hochschild cohomology with coefficients in any bimodule is 0. Alternatively, see  \cite[Eqn. (5.11)]{Kassel} for an explicit short exact projective resolution which also yields this result.
\end{proof}

 \subsection{Gradient-$\cS_p$ estimates}

Let $[n]$ be the set $\{ 1, \ldots, n \}$,
 with $n\in \N$. $[n]$ has its natural order. We will consider the operator $\Psi^{a,b}$ associated to the Ornstein-Uhlenbeck semigroup. We will need a formula for products of Wick words to be able to work with this operator. We will follow Effros-Popa \cite{EffrosPopa} (see also \cite{DeyaSchott}).
 \begin{dfn}
 Let $n_1,\dots,n_k$ be natural numbers. By $P^{\leqslant 2}(n_1\otimes\dots\otimes n_k)$ we denote the set of partitions of the set $[n]$, where $n=n_1+\dots+n_k$, with blocks of size at most two, such that there is no pairing inside the sets $\{1,\dots,n_1\}, \{n_1+1,\dots,n_1+n_2\},\dots, \{n-n_k+1,\dots, n\}$. For $\pi \in P^{\leqslant 2}(n_1\otimes\dots\otimes n_k)$ we will define the number of crossings $ cr (\pi)$. Let $\pi = (l_1,r_1),\dots, (l_{m}, r_{m}), s_1,\dots, s_l$ be the blocks of the partition, so $2m+l=n$; we will denote by $P(\pi)$ the set of pairs and by $S(\pi)$ the set of singletons. We will denote by $c(\pi)$ the usual crossing number, i.e. the number of pairs $(l_i, r_i)$ and $(l_j,r_j)$ such that $l_i<l_j<r_i<r_j$. The symbol $d(\pi)$ will denote the set of degenerate crossings, i.e. the number of triples $x<y<z$ such that $(x,z)=(l_i,r_i)$ for some $i$ and $y=s_j$ for some $j$, i.e. $y$ is not paired with anything. The crossing number $cr(\pi)$ is defined to be $cr(\pi)=c(\pi) + d(\pi)$.
 \end{dfn}
While cumbersome to formally define, the crossing number is easy to compute, provided that we have a graphical representation of the partition available:
\begin{figure}[h]
\centering
\begin{tikzpicture}

\draw[thick] (4,-0.5)--(4,0.5);
\draw[thick] (8,-0.5)--(8,0.5);
\draw (0,0)--(11,0);
\foreach \x in {1,...,11}{
\filldraw[black] (-0.5+\x,0) circle (1pt) node[below] {{\tiny $\x$}};
}

\draw[ultra thick] (3.5,0)--(3.5,1)--(8.5,1)--(8.5,0);
\draw[ultra thick] (7.5,0)--(7.5,1.8)--(9.5,1.8)--(9.5,0);
\draw[ultra thick] (1.5,0)--(1.5,1.8)--(6.5,1.8)--(6.5,0);
\draw[dashed] (0.5,0)--(0.5,2.5);
\draw[dashed] (2.5,0)--(2.5,2.5);
\draw[dashed] (4.5,0)--(4.5,2.5);
\draw[dashed] (5.5,0)--(5.5,2.5);
\draw[dashed] (10.5,0)--(10.5,2.5);
\end{tikzpicture}
\end{figure}

For this partition $\pi$ there are two regular crossings (i.e. $c(\pi)=2$) and $5$ degenerate crossings, so $cr(\pi)=7$.

We are now ready to state the multiplication formula for Wick words (cf. \cite[Theorem 3.3]{EffrosPopa}).

\begin{lem}\label{Lem=WickProduct}
Let $\bm{\xi}=\xi_1\otimes \dots\otimes\xi_n \in H^{\otimes n}$, where $n=n_1+\dots+n_k$. Let us call $\bm{\xi}_1, \dots, \bm{\xi}_k$ the tensors that arise from this decomposition of $n$, i.e. $\bm{\xi}_1= \xi_1\otimes\dots\otimes \xi_{n_1}$, $\bm{\xi}_2= \xi_{n_1+1} \otimes \dots \otimes \xi_{n_1+n_2}$, etc.  We have the following formula	
	\begin{equation}\label{Eqn=WickProduct}
		W_q(\bm{\xi}_1)\dots W_q(\bm{\xi}_{k}) = \sum_{\pi \in P^{\leqslant 2}(n_1\otimes\dots\otimes n_k)}
	q^{cr(\pi)}\left(\prod_{(l,r)\in P(\pi)} \langle  I\xi_l, \xi_r \rangle\right)	W_q(\bm{\xi}_{S(\pi)}),
		\end{equation}
where $\bm{\xi}_{S(\pi)}:= \xi_{s_1}\otimes\dots\otimes \xi_{s_l}$ if $S(\pi)=\{s_1<s_2<\dots< s_l\}$.	
\end{lem}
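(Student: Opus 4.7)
The plan is to prove the formula by induction on the number of factors $k$, exploiting that $\Omega$ is separating for $\Gamma_q(\RH)$: an operator identity in $\Gamma_q$ is determined by its action on $\Omega$. For $k=1$, the set $P^{\leqslant 2}(n_1)$ consists only of the all-singletons partition (no pairs are permitted inside a single block), which has $cr(\pi)=0$ and $\bm{\xi}_{S(\pi)}=\bm{\xi}_1$, so both sides applied to $\Omega$ give $\bm{\xi}_1$.

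For the inductive step, applying both sides to $\Omega$ and invoking the induction hypothesis on $W_q(\bm{\xi}_2)\cdots W_q(\bm{\xi}_k)\Omega$ reduces the task to computing $W_q(\bm{\xi}_1)\cdot\bm{\eta}$ for an arbitrary simple tensor $\bm{\eta}=\eta_1\otimes\cdots\otimes\eta_m$. This sub-computation proceeds via the standard Wick decomposition
\[
W_q(\bm{\xi}_1) = \sum_{J\subseteq[n_1]} q^{i(J)}\, l_q(\xi_{k_1})\cdots l_q(\xi_{k_r})\, l_q^\ast(I\xi_{j_1})\cdots l_q^\ast(I\xi_{j_s}),
\]
where $[n_1]\setminus J=\{k_1<\cdots<k_r\}$, $J=\{j_1<\cdots<j_s\}$, and $i(J)$ is the associated inversion count; the annihilation operators then act on $\bm{\eta}$ by the standard formula
\[
l_q^\ast(\zeta)(\eta_1\otimes\cdots\otimes\eta_m)=\sum_{i=1}^m q^{i-1}\langle\zeta,\eta_i\rangle\,\eta_1\otimes\cdots\widehat{\eta_i}\cdots\otimes\eta_m,
\]
producing pairings between indices in $J$ and indices of $\bm{\eta}$ weighted by the inner products $\langle I\xi_l,\eta_r\rangle$, while the surviving creation operators place the tensor $\xi_{k_1}\otimes\cdots\otimes\xi_{k_r}$ in front of the unpaired indices of $\bm{\eta}$.

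Combining this local formula with the induction hypothesis shows that the pairings between $[n_1]$ and the indices of $\bm{\eta}$ (from the sub-computation) together with the pairings internal to $[n_2]\sqcup\cdots\sqcup[n_k]$ (supplied by the hypothesis) exhaust precisely the partitions $\pi\in P^{\leqslant 2}(n_1\otimes\cdots\otimes n_k)$; the ordering of unpaired indices in $\bm{\xi}_{S(\pi)}$ is preserved because the creations in the Wick decomposition appear in increasing order of their index.

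The main obstacle is tracking the total $q$-exponent, which must equal $cr(\pi)=c(\pi)+d(\pi)$. The inversion count $i(J)$ accounts for crossings between paired indices straddled inside $\bm{\xi}_1$; each $q^{i-1}$ factor from the annihilations accounts for crossings picked up when an annihilation jumps over indices of $\bm{\eta}$, both singleton and already-paired; and the inductively supplied exponent handles crossings living entirely inside $[n_2]\sqcup\cdots\sqcup[n_k]$. The definition $cr(\pi)=c(\pi)+d(\pi)$ is tailored precisely so that every regular and every degenerate crossing is counted exactly once by this recursive procedure, which is the essential combinatorial content of the Effros--Popa multiplication formula \cite{EffrosPopa} recast in the partition language adopted here.
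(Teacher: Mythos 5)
The paper does not actually prove this lemma: it is imported wholesale, with the reference ``(cf.\ \cite[Theorem 3.3]{EffrosPopa})'', so your self-contained argument is a genuinely different route, and a reasonable one. Your strategy --- induct on the number $k$ of factors, use that $\Omega$ is separating for $\Gamma_q$ to reduce the operator identity to a vector identity on $\Omega$, expand the leftmost factor $W_q(\bm{\xi}_1)$ by the creation/annihilation decomposition of \cite{BKS}, and let the annihilators act on the simple tensor supplied by the inductive hypothesis --- is exactly how one would reprove Effros--Popa's formula directly, and all the ingredients you invoke (the Wick decomposition with inversion weight, the rule $l_q^\ast(\zeta)(\eta_1\otimes\cdots\otimes\eta_m)=\sum_i q^{i-1}\langle\zeta,\eta_i\rangle\,\eta_1\otimes\cdots\widehat{\eta_i}\cdots\otimes\eta_m$, and the bijection between ``matching of $J\subseteq[n_1]$ into the surviving legs'' plus ``partition $\sigma$ of the remaining groups'' and partitions $\pi\in P^{\leqslant 2}(n_1\otimes\cdots\otimes n_k)$) are correct. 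What the citation buys the paper is brevity; what your argument buys is an actual proof, in the same spirit as the remark the authors make later (in Proposition \ref{Prop=GradientQComp}) that the additivity of crossing numbers under iterated multiplication ``is not hard to check, but not immediate''.

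That additivity is also the one place where your write-up is thin and, as stated, slightly misattributed. The exponent identity one must verify is $i(J)+\sum_{j\in J}(\text{annihilation exponents})+cr(\sigma)=cr(\pi)$, where $\sigma$ is the inductively given partition of $[n_2]\sqcup\cdots\sqcup[n_k]$. Your accounting says the annihilation factors pick up crossings with ``singleton and already-paired'' legs and that the inductive exponent handles crossings ``living entirely inside $[n_2]\sqcup\cdots\sqcup[n_k]$''. But a crossing in $\pi$ between a new pair $(j,m(j))$, $j\in[n_1]$, and a pair of $\sigma$ cannot be seen by the annihilators at all (the legs of that $\sigma$-pair are no longer present in $\bm{\eta}=\bm{\xi}_{S(\sigma)}$), nor does it live entirely inside the later groups. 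It is accounted for because $cr(\sigma)$ already counts it --- as a \emph{degenerate} crossing of $\sigma$ at the singleton $m(j)$, which only later becomes a genuine crossing of two pairs in $\pi$; this reidentification is precisely why the degenerate crossings are built into the definition of $cr$. With that correction the bookkeeping does close up: $i(J)$ gives the degenerate crossings whose singleton lies in $[n_1]$, the $q^{i-1}$ factors (summed over the annihilators, which act right to left) give the crossings among the new pairs together with the degenerate crossings over legs that survive into $S(\pi)$, and $cr(\sigma)$ gives everything else. So the proof is correct in substance, but this exponent verification is the essential content and deserves to be written out rather than asserted.
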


This formula will be instrumental in the proof of the next proposition.

\begin{prop}\label{Prop=GradientQComp}
	Take operators in $\cA_q$ of the form
	\[
	a = W_q(\bm{\xi}) = W_q(\xi_1 \otimes \ldots \otimes \xi_n), b = W_q(\bm{\eta}) = W_q(\eta_1 \otimes \ldots \otimes \eta_k), x = W_q(\bm{\mu}) = W_q(\mu_1 \otimes \ldots \otimes \mu_m ),
	\]
	where each $\xi_i, \eta_i, \mu_i \in H$; define also $\bm{v}:=\bm{\xi}\otimes \bm{\mu}\otimes \bm{\eta}$. Then we have
	\begin{align}\label{Eqn=Laplaceformula}
	&\Delta(axb)- \Delta(ax)b + a \Delta(x) b - a\Delta(xb) \\
&= -2\sum_{\pi \in P^{\leqslant 2}(n\otimes m \otimes k)} \left|(l,r)\in P(\pi): l\in [n], r\in[k]\right|q^{cr(\pi)} 	\left(\prod_{(l,r)\in P(\pi)} \langle  Iv_l, v_r \rangle\right)	W_q(\bm{v}_{S(\pi)}). \notag
	\end{align}
	
\end{prop}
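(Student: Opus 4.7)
The plan is to expand each of the four terms on the left-hand side of \eqref{Eqn=Laplaceformula} as a linear combination of Wick words $W_q(\bm v_{S(\pi)})$ indexed by $\pi \in P^{\leqslant 2}(n\otimes m\otimes k)$, and then compare coefficients partition by partition. For brevity I will write $P_A(\pi)$, $P_B(\pi)$, $P_C(\pi)$ for the subsets of $P(\pi)$ consisting of pairs with endpoints in $[n]\cup[m]$, $[m]\cup[k]$, and $[n]\cup[k]$ respectively, so that $|P(\pi)| = |P_A(\pi)| + |P_B(\pi)| + |P_C(\pi)|$; the right-hand side of \eqref{Eqn=Laplaceformula} is then $-2|P_C(\pi)|$ times the obvious factors.

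For $\Delta(axb)$ and $a\Delta(x)b$ the expansions are immediate: applying Lemma~\ref{Lem=WickProduct} to the three-fold product and using that $\Delta$ acts on a Wick word by multiplication by its length $|S(\pi)| = n+m+k - 2|P(\pi)|$ gives
\[
\Delta(axb) = \sum_\pi (n+m+k - 2|P(\pi)|)\, q^{cr(\pi)} \Bigl(\prod_{(l,r)\in P(\pi)} \langle Iv_l, v_r\rangle\Bigr) W_q(\bm v_{S(\pi)}),
\]
and since $\Delta(x) = m \cdot x$ we have $a\Delta(x)b = m \cdot axb$, contributing coefficient $m \cdot q^{cr(\pi)}$ at each $\pi$.

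The two mixed terms I would handle by iterating Lemma~\ref{Lem=WickProduct}. For $\Delta(ax)b$, expand $ax$ as a sum over $\rho \in P^{\leqslant 2}(n\otimes m)$, apply $\Delta$ (which multiplies the $\rho$-indexed term by $n+m - 2|P(\rho)|$), and then expand $W_q(\bm v_{S(\rho)})W_q(\bm\eta)$ again by Lemma~\ref{Lem=WickProduct} with $\tau \in P^{\leqslant 2}(|S(\rho)|\otimes k)$. Each $\pi \in P^{\leqslant 2}(n\otimes m\otimes k)$ arises from the unique decomposition $\rho = P_A(\pi)$, $\tau = P_B(\pi) \sqcup P_C(\pi)$, and the crossing-number identity $cr(\pi) = cr(\rho) + cr_\rho(\tau)$ (with $cr_\rho(\tau)$ the crossing number of $\tau$ computed inside $S(\rho)\sqcup[k]$) yields
\[
\Delta(ax)b = \sum_\pi (n+m - 2|P_A(\pi)|)\, q^{cr(\pi)} \Bigl(\prod_{(l,r)\in P(\pi)} \langle Iv_l, v_r\rangle\Bigr) W_q(\bm v_{S(\pi)}),
\]
and by the symmetric argument applied to $xb$,
\[
a\Delta(xb) = \sum_\pi (m+k - 2|P_B(\pi)|)\, q^{cr(\pi)} \Bigl(\prod_{(l,r)\in P(\pi)} \langle Iv_l, v_r\rangle\Bigr) W_q(\bm v_{S(\pi)}).
\]

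Assembling the four expansions, the coefficient of $q^{cr(\pi)}\prod\langle Iv_l, v_r\rangle W_q(\bm v_{S(\pi)})$ in the alternating sum collapses algebraically to
\[
(n+m+k - 2|P(\pi)|) - (n+m - 2|P_A(\pi)|) + m - (m+k - 2|P_B(\pi)|) = -2|P_C(\pi)|,
\]
which is exactly the right-hand side of \eqref{Eqn=Laplaceformula}. The main obstacle is the crossing-number identity $cr(\pi) = cr(\rho) + cr_\rho(\tau)$: a direct combinatorial verification requires tracking how regular and degenerate crossings of $\pi$ recombine through the interaction of pairs and singletons of $\rho$ with pairs of $\tau$. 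A cleaner route is to invoke associativity of the product in $\cA_q$: for generic vectors $\bm v$ the Wick monomials indexed by distinct partitions are linearly independent, so the identity $(ax)b = axb$ as elements of $\cA_q$ forces the iterated coefficient $q^{cr(\rho) + cr_\rho(\tau)}$ to match the direct coefficient $q^{cr(\pi)}$, whence the identity follows.
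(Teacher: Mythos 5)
Your proposal is correct and is essentially the paper's own argument: expand all four terms by iterating the two-fold Wick product formula of Lemma \ref{Lem=WickProduct}, observe that $\Delta$ inserts the weights $n+m+k-2|P(\pi)|$, $n+m-2|P_A(\pi)|$, $m$, and $m+k-2|P_B(\pi)|$ respectively, and collapse the alternating sum to $-2|P_C(\pi)|$; the paper also relies on (and only asserts, without detailed proof) the additivity of crossing numbers under the iterated multiplication, exactly the point you flag. Your alternative associativity-plus-genericity justification of that additivity is a sensible supplement, but as stated the linear-independence claim needs care: distinct partitions with the same singleton set yield the very same Wick word, so one must isolate coefficients by a specific choice of vectors (e.g.\ assigning a distinct basis vector to each pair of the target partition, in an auxiliary Hilbert space of sufficiently large dimension, which is legitimate since the crossing identity is purely combinatorial and independent of $H$).
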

\begin{proof}
	 We determine the four summands in
	\begin{equation}\label{Eqn=DeltaFour}
	\Delta(axb) + a \Delta(x) b - \Delta(ax)b - a \Delta(xb).
	\end{equation}
	Note that the formula for a triple product from the Lemma \ref{Lem=WickProduct} can be obtained by applying the formula for double products twice. It means that we can compute $axb$ as either $(ax)b$ or $a(xb)$. The fact that the crossings that we compute on the way add up to the crossing number of the whole partition is not hard to check, but not immediate. For instance, if we compute $axb$ as $(ax)b$, then first we get a sum over partitions of $n\otimes m$ and then over partitions of $(n+m)\otimes k$, where you remove the nodes from $[n+m]$ that have already been paired; you have to check that the sum of the crossing numbers of these partitions is equal to the crossing number of the resulting partition of $n\otimes m \otimes k$.
	
Once we have this observation, it is not hard to arrive at the formula. Indeed, each of the four terms in \eqref{Eqn=DeltaFour} will feature a sum as in the formula for a triple product of Wick words, but with coefficients coming from the Laplacian. In case of $\Delta(axb)$ the coefficient is equal to $n+m+k - 2|P(\pi)|$. For $\Delta(ax)b$ the coefficient is equal to $n+m - 2|\{(l,r)\in P(\pi): l\in[n], r\in [m]\}|$. The term $a\Delta(x) b$ just yields $m$. The last one, $a\Delta(xb)$, produces $m+k - 2|\{(l,r)\in P(\pi): l\in[m], r\in[k]\}|$. If we compute $\Delta(axb) - \Delta(ax)b + a\Delta(x)b -a\Delta(xb)$ then the result is $-2|\{(l,r)\in P(\pi): l\in [n], r\in [k]\}|$. Indeed, the pairings between $[n]$ and $[m]$ are accounted for by $\Delta(ax)b$, the ones between $[m]$ and $[k]$ by $a\Delta(xb)$, and only pairings between $[n]$ and $[k]$ are left.
	\end{proof}

We now have a formula for $\Psi^{a,b}(x)$. In order to estimate the $\cS_p$-norm of this map, we will estimate separately the $\cS_p$-norms of $\Psi^{a,b}$ restricted to $H_q^{\otimes m}$ for each $m\geqslant 0$. Recall that on an $N$-dimensional Hilbert space we can estimate the $\cS_p$-norm by the operator norm, namely $\|T\|_{p} \leqslant N^{\frac{1}{p}} \|T\|$. Therefore we will be concerned only with the operator norm of $\Psi^{a,b}$ restricted to $H^{\otimes m}$.
\begin{prop}\label{Prop=Normestimate}
Let $\bm{\xi}:=\xi_1\otimes\dots\otimes\xi_n \in H^{\otimes n}$, $\bm{\eta}:=\eta_1\otimes\dots\otimes \eta_n \in H^{\otimes k}$ and consider $a:=W(\bm{\xi})$ and $b:=W(\bm{\eta})$. Then the norm of $\Psi^{a,b}$ restricted to $H_q^{\otimes m}$ can be estimated as follows
\begin{equation}
\|\Psi^{a,b}_{|H_q^{\otimes m}}\| \leqslant C(q, \bm{\xi}, \bm{\eta}, \dim(H) ) |q|^{m}.
\end{equation}
\end{prop}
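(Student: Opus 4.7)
The starting point is Proposition \ref{Prop=GradientQComp}, which, specialized to $x = W_q(\bm{\mu})$ with $\bm{\mu} \in H^{\otimes m}$ and $\bm{v} = \bm{\xi} \otimes \bm{\mu} \otimes \bm{\eta}$, realizes $\Psi^{a,b}(\bm{\mu})$ as a sum, indexed by those $\pi \in P^{\leqslant 2}(n \otimes m \otimes k)$ having at least one $[n]$--$[k]$ pair, of $q^{cr(\pi)} \prod \langle Iv_l, v_r\rangle$ times Wick operators $W_q(\bm{v}_{S(\pi)})$. I would organize these partitions by their ``skeleton'' $(s,u,v)$ counting the numbers of $[n]$--$[k]$, $[n]$--$[m]$, and $[m]$--$[k]$ pairs. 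The constraints $s+u \leqslant n$ and $s+v \leqslant k$ force $u+v \leqslant n+k-2$, a bound independent of $m$; this is crucial because it caps how many slots of $\bm{\mu}$ can be absorbed into inner products against $\bm{\xi}$'s and $\bm{\eta}$'s.

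For a fixed $\pi$, the corresponding summand factors as (i) $u+v$ contractions of $\bm{\mu}$ against vectors from $\{I\xi_i\} \cup \{I\eta_j\}$, reducing its degree from $m$ to $m-u-v$; (ii) interleaving with the unpaired $\xi_i$'s and $\eta_j$'s to form a tensor of degree $n+m+k-2(s+u+v)$; (iii) multiplication by the scalar $s(\pi)\,q^{cr(\pi)}\prod_{[n]\text{--}[k]} \langle I\xi_i, \eta_j\rangle$; and (iv) application of $\bm{w} \mapsto W_q(\bm{w})\Omega$. Using the Bo\.zejko--Speicher bound $\|P_q^N\|_{\mathrm{op}} \leqslant C(|q|)$ uniformly in $N$ for step (iv), together with elementary tensor-contraction estimates for (i)--(iii), the operator norm of a single $\pi$-summand as a map $H_q^{\otimes m} \to F_q(H)$ is bounded by $C(|q|)\,\|\bm{\xi}\|\|\bm{\eta}\|\,|q|^{cr(\pi)}$.

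The heart of the proof is a sharp lower bound on $cr(\pi)$. Since $s \geqslant 1$, any $[n]$--$[k]$ pair $(l,r)$ satisfies $l \leqslant n < n+m+1 \leqslant r$, so strictly straddles all of $[m]$; each of the $m-u-v$ singletons in $[m]$ is therefore degeneratively crossed by $(l,r)$, yielding the baseline $cr(\pi) \geqslant m-u-v$. This alone is insufficient, because the number of partitions of skeleton $(s,u,v)$ grows as $\sim m^{u+v}$ from the choice of $[m]$-positions of the paired feet, and $|q|^{m-u-v}$ times $m^{u+v}$ still contains a polynomial in $m$. The essential refinement is that each $[n]$--$[m]$ pair with $[m]$-foot at position $y$ degeneratively crosses every $[m]$-singleton lying below $y$, contributing further crossings proportional to the $[m]$-depth of $y$; symmetrically for $[m]$--$[k]$ pairs. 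Consequently, once we fix the skeleton and sum $|q|^{cr(\pi)}$ over the $[m]$-positions of the $u+v$ paired feet, the extra depth-dependent crossings convert the would-be $m^{u+v}$ combinatorial factor into a convergent geometric series bounded by $(1-|q|)^{-(u+v)}$, independently of $m$.

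Putting the pieces together, the finite set of skeletons and the at most $\binom{n}{s+u}\binom{k}{s+v}$ many choices of $[n]$- and $[k]$-positions involved in pairings contribute a factor depending only on $n,k$; the per-partition operator-norm bound contributes $C(|q|)\|\bm{\xi}\|\|\bm{\eta}\|$; and the position sum contributes $(1-|q|)^{-(u+v)} |q|^{m-(u+v)} \leqslant C(|q|, n, k)\,|q|^m$. Collecting yields $\|\Psi^{a,b}|_{H_q^{\otimes m}}\| \leqslant C(q, \bm{\xi}, \bm{\eta}, \dim H)\,|q|^m$, as desired. The main obstacle is the combinatorial bookkeeping in the crossing-count step: one must verify that the depth-dependent degenerate crossings precisely offset the $m^{u+v}$ growth in the number of partitions, leaving no polynomial-in-$m$ residue.
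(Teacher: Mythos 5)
Your combinatorial skeleton analysis (the baseline $cr(\pi)\geq m-u-v$ coming from the $s\geq 1$ pairs straddling the whole $[m]$-block, plus the depth-dependent degenerate crossings turning the $\sim m^{u+v}$ position sum into a geometric series) is sound in spirit and plays the role of the explicit factor $q^{r(m-j-s)}$ in the paper's Proposition \ref{Prop=Tripleproduct}. The genuine gap is in your analytic step (iv) (and implicitly (i)): you invoke ``the Bo\.zejko--Speicher bound $\|P_q^N\|\leqslant C(|q|)$ uniformly in $N$'', and this is false. Testing $P_q^N$ on $e^{\otimes N}$ gives $\langle P_q^N e^{\otimes N},e^{\otimes N}\rangle=[N]_{q}!\geqslant (1+q)^{N-1}$ for $q\in(0,1)$, so $\|P_q^N\|$ grows geometrically in $N$; only the \emph{lower} bound $P_q^N\geqslant \prod_{k\geqslant 1}(1-|q|^k)>0$ is uniform. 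Consequently your per-partition bound $\|(\pi\text{-summand})\|\leqslant C(|q|)\|\bm{\xi}\|\|\bm{\eta}\||q|^{cr(\pi)}$ is not justified: the output tensor has degree comparable to $m$, and converting its free-Fock norm into the $q$-deformed norm via $\|P_q^N\|^{1/2}$ costs a factor that can grow like $(1-|q|)^{-m/2}$, which swamps $|q|^{cr(\pi)}\sim|q|^m$ for $|q|$ close to $1$. Likewise, ``elementary tensor-contraction estimates'' for contracting interior slots of $\bm{\mu}$ are elementary only for the undeformed inner product, so the same norm-comparison problem appears on the input side unless handled differently.

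The correct replacement, and the actual technical heart of the paper's proof, is the uniform boundedness of the shuffle-type operators $R^{\ast}_{n,k}$ (equivalently $P_q^{n+k}\leqslant C(q)\,P_q^{n}\otimes P_q^{k}$, Bo\.zejko's theorem, cf.\ \cite{Ilona}, \cite{Nou}), extended to three blocks via Lemma \ref{Lem=Splitting}. Your argument can in principle be repaired along these lines -- splitting at the (boundedly many, since $u+v\leqslant n+k$) scattered contraction slots costs only $C(q)$ per split -- but note that the paper avoids the partition-by-partition crossing bookkeeping altogether: Proposition \ref{Prop=Tripleproduct} resums all partitions with given contraction sizes $(j,r,s)$ into a single expression built from $R^{\ast}_{\cdot,\cdot,\cdot}$, the pairings $m_j^{ab}$ (each costing $\dim(H)^{j/2}$), and the explicit scalar $q^{r(m-j-s)}$, so the $|q|^m$ decay and the uniform-in-$m$ operator bounds come out simultaneously from finitely many terms. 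As written, your proposal has a real hole at exactly the point where the $q$-deformed norms enter; the crossing-count refinement you flag as the ``main obstacle'' is actually the part that works.
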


In order to obtain useful estimates, we need to replace the formula \eqref{Eqn=WickProduct} by a new one, which takes into account the $q$-deformed inner products. We will only consider the case of $2$ or $3$ Wick words for simplicity. We will need operators $R_{n,k}^{\ast}\colon H^{\otimes n+k} \to H^{\otimes n+k}$ given by
\[
R_{n,k}^{\ast}(v_1\otimes \dots \otimes v_n) = \sum_{A\subset [n+k]: |A|=n} q^{i(A)} v_{A}\otimes v_{[n]\setminus A},
\]
where $i(A) = \sum_{l=1}^{n} (i_{l}-l)$ if $A=\{i_1<i_2<\dots < i_{n}\}$; one can interpret this number as the cost of moving elements of $A$ to the left of $[n+k]$ or, equivalently, cost of moving its complement to the right. Now we can rewrite the formula for the double product of Wick words.
\begin{lem}
For ($\bm{\mu}=\mu_1\otimes\dots\otimes \mu_m$ and $\bm{\eta}=\eta_1\otimes \dots \otimes \eta_k$) we have
\begin{equation}\label{Eqn=NewWickproduct}
W(\bm{\mu})W(\bm{\eta})\Omega = \sum_{j=0}^{m\wedge k} ({\rm Id}_{m-j}\otimes m_j \otimes {\rm Id}_{k-j})(R_{m-j,j}^{\ast}(\bm{\mu})\otimes R_{j,k-j}^{\ast}(\bm{\eta})),
\end{equation}
where $m_j: H_q^{\otimes j} \otimes H_q^{\otimes j} \to \mathbb{C}$ is the inner product pairing given by $m_j(v\otimes w) = \langle Iv, w\rangle$.
\end{lem}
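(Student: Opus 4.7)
The plan is to re-parameterize and re-sum the Effros--Popa formula \eqref{Eqn=WickProduct} in the two-word case, evaluated at the vacuum $\Omega$, by grouping partitions according to which indices survive on each side. For each $\pi\in P^{\leqslant 2}(m\otimes k)$ with $|P(\pi)|=j$, I would encode $\pi$ by a triple $(A,B,\tau)$, where $A\subseteq[m]$ is the set of left singletons ($|A|=m-j$), $B\subseteq[k]$ is the set of right-contracted indices ($|B|=j$), and $\tau\in S_j$ is the permutation determined by the bijection $\phi\colon A^c\to B$ via $\phi(c_p)=b_{\tau(p)}$, with $A^c=\{c_1<\dots<c_j\}$ and $B=\{b_1<\dots<b_j\}$. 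Applied to $\Omega$, each such $\pi$ contributes a scalar multiple of the singleton tensor $\bm{\mu}_A\otimes \bm{\eta}_{B^c}$ with $B^c=[k]\setminus B$.

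Next I would decompose the crossing number as
\[
cr(\pi)\;=\;i(A)+i(B)+\binom{j}{2}-i(\tau),
\]
using that $i(A)=\sum_l(a_l-l)=|\{(c,a):c\in A^c,\,a\in A,\,c<a\}|$ counts exactly the degenerate crossings between arcs and left singletons, $i(B)$ analogously counts the degenerate crossings with right singletons, and the number of honest arc crossings equals $\binom{j}{2}-i(\tau)$ since two arcs $\{c_p,b_{\tau(p)}+m\}$ and $\{c_q,b_{\tau(q)}+m\}$ with $p<q$ cross iff $\tau(p)<\tau(q)$. This shows that the $q$-weights $q^{i(A)}$ and $q^{i(B)}$ produced by the Wick formula match precisely the weights appearing in the expansions of $R_{m-j,j}^{\ast}(\bm{\mu})$ and $R_{j,k-j}^{\ast}(\bm{\eta})$.

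The final step is to identify the remaining factor $q^{\binom{j}{2}-i(\tau)}\prod_p\langle I\mu_{c_p},\eta_{b_{\tau(p)}}\rangle$ with the pairing $m_j(\bm{\mu}_{A^c}\otimes\bm{\eta}_B)$. Expanding $m_j(\bm{\mu}_{A^c}\otimes\bm{\eta}_B)=\langle P_q^j(I\bm{\mu}_{A^c}),\bm{\eta}_B\rangle$ and using $I(\mu_{c_1}\otimes\dots\otimes\mu_{c_j})=I\mu_{c_j}\otimes\dots\otimes I\mu_{c_1}$, the change of variable $\rho(l)=j+1-\sigma(l)$ together with $\tau=\rho^{-1}$ and $i(\rho^{-1})=i(\rho)$ converts $\sum_{\sigma\in S_j}q^{i(\sigma)}$ into $\sum_\tau q^{\binom{j}{2}-i(\tau)}$, matching precisely the needed weights. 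Substituting back into \eqref{Eqn=WickProduct} and summing over $j$, $A$, $B$ and $\tau$ yields \eqref{Eqn=NewWickproduct}.

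The main obstacle will be the crossing-number accounting: verifying that the three pieces $i(A)$, $i(B)$ and $\binom{j}{2}-i(\tau)$ add exactly to $cr(\pi)$ with no double-counting of the degenerate crossings, and getting the reversal convention in $I$ and the normalization of $P_q^j$ to cooperate so that the symmetrizer produces $\binom{j}{2}-i(\tau)$ rather than $i(\tau)$ in the exponent.
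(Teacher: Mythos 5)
Your proposal is correct and follows essentially the same route as the paper's (very terse) proof: group the partitions in the Effros--Popa formula by the pair of contracted subsets, let the $i(A)$-weights in $R^{\ast}_{m-j,j}$ and $R^{\ast}_{j,k-j}$ absorb the degenerate crossings, and let the symmetrizer $P_q^j$ inside $m_j$ absorb the honest arc crossings. Your explicit accounting $cr(\pi)=i(A)+i(B)+\binom{j}{2}-i(\tau)$ and the identification of the $\tau$-sum with $\langle P_q^j I\bm{\mu}_{A^c},\bm{\eta}_B\rangle$ via $\sigma=w_0\tau^{-1}$ are exactly right (the reversal coming from $I$ is what produces $\binom{j}{2}-i(\tau)$ rather than $i(\tau)$), so the worry you flag does not materialize.
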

\begin{proof}Note that in Lemma \ref{Lem=WickProduct} the crossings of partitions come from two sources; from pairing a subset of $[m]$ with a subset of $[k]$ (the degenerate crossings) and from crossings inside these subsets. If you fix the subsets and sum over all partitions that pair these two subsets, the crossings can be incorporated into the definition of the $q$-deformed inner product, and the numbers $i(A)$ appearing in the definition of $R_{n,k}^{\ast}$ count exactly the number of degenerate crossings.
\end{proof}

We can iterate this to obtain a formula for $W(\bm{\xi})W(\bm{\mu})W(\bm{\eta})$. But if we do not introduce a simpler notation, it will get very complicated. We will start from an element of a triple tensor product $H^{\otimes n}\otimes H^{\otimes m} \otimes H^{\otimes k}$. One thing that we will have to do is to apply inner product pairing between any two of them, and we need a notation for that. So we will use the notation $m_{j}^{ab}$ to denote the pairing of $H^{\otimes j}$ from between the $a$-th and $b$-th space, e.g. $m_{j}^{13}$ will first split $H^{\otimes n} \otimes H^{\otimes m} \otimes H^{\otimes k}$ into $H^{\otimes n-j}\otimes H^{\otimes j} \otimes H^{\otimes m} \otimes H^{\otimes j} \otimes H^{\otimes k-j}$ and then pair the two $H^{\otimes j}$ spaces. Another thing is that operators $R_{n,k}^{\ast}$ only split the tensor power into two factors, but we will have to do it again, so we need a notation for that as well. The defining property of operators $R_{n,k}^{\ast}$ is $P_q^{n+k}= (P_q^{n}\otimes P_q^{k}) R_{n,k}^{\ast}$ (cf. \cite[Lemma 5]{Ilona}). It motivates the following definition.
\begin{dfn}
We define the operator $R_{n,k,l}^{\ast}\colon H^{\otimes n+k+l}\to H^{\otimes n+k+l}$ to be the unique operator such that $P_q^{n+k+l} = (P_q^{n}\otimes P_q^{k}\otimes P_q^{l})R_{n,k,l}^{\ast}$.
\end{dfn}
Later on we will have to use boundedness of these operators. In order to achieve that goal we will need the following simple lemma.
\begin{lem}\label{Lem=Splitting}
We have
\[
R_{n,k,l}^{\ast} = (R_{n,k}^{\ast}\otimes {\rm Id}_{l}) R_{n+k,l}^{\ast} = ({\rm Id}_n\otimes R_{k,l}^{\ast})R_{n,k+l}^{\ast}.
\]
\end{lem}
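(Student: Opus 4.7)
The plan is to derive both identities directly from the defining property $P_q^{n+k+l} = (P_q^n \otimes P_q^k \otimes P_q^l) R^*_{n,k,l}$ together with the two-factor version $P_q^{a+b} = (P_q^a \otimes P_q^b) R^*_{a,b}$, and then invoke a uniqueness argument based on invertibility of $P_q^m$.

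First, I would apply the two-factor splitting to $P_q^{n+k+l}$ with the grouping $(n+k) + l$, obtaining
\[
P_q^{n+k+l} = (P_q^{n+k} \otimes P_q^l)\, R^*_{n+k,l}.
\]
Then I would split $P_q^{n+k} = (P_q^n \otimes P_q^k) R^*_{n,k}$ inside the first tensor factor, which yields
\[
P_q^{n+k+l} = \bigl((P_q^n \otimes P_q^k) R^*_{n,k} \otimes P_q^l\bigr) R^*_{n+k,l} = (P_q^n \otimes P_q^k \otimes P_q^l)\, (R^*_{n,k}\otimes {\rm Id}_l)\, R^*_{n+k,l}.
\]
Applying the same procedure with the alternative grouping $n + (k+l)$ produces
\[
P_q^{n+k+l} = (P_q^n \otimes P_q^k \otimes P_q^l)\, ({\rm Id}_n \otimes R^*_{k,l})\, R^*_{n,k+l}.
\]

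To conclude, I would appeal to the uniqueness of the operator $R^*_{n,k,l}$: since Bo\.zejko--Speicher showed that $P_q^m$ is strictly positive definite on $H^{\otimes m}$ for $|q|<1$, the tensor $P_q^n \otimes P_q^k \otimes P_q^l$ is invertible on $H^{\otimes n+k+l}$, so the identity $P_q^{n+k+l} = (P_q^n \otimes P_q^k \otimes P_q^l) X$ has at most one solution $X$. Both $R^*_{n,k,l}$ and each of the two composite operators from the displays above satisfy this identity, so all three coincide, giving the two claimed equalities.

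The argument is essentially formal manipulation, so there is no real obstacle; the only point that requires care is the uniqueness step, which depends on the (standard but nontrivial) strict positivity of $P_q^m$ for $|q|<1$. I would cite this fact from Bo\.zejko--Speicher rather than reprove it.
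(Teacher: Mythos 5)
Your argument is correct. It differs from the paper's proof mainly in mechanism rather than substance: the paper notes that $R_{n,k}^{\ast}$ is, by the relation $P_q^{n+k}=(P_q^n\otimes P_q^k)R_{n,k}^{\ast}$, the adjoint of the ``identity'' map $R_{n,k}\colon H_q^{\otimes n}\otimes H_q^{\otimes k}\to H_q^{\otimes n+k}$, observes that the triple identity map factors trivially as $R_{n+k,l}(R_{n,k}\otimes \mathrm{Id}_l)$ or $R_{n,k+l}(\mathrm{Id}_n\otimes R_{k,l})$, and takes adjoints; you instead substitute the two-factor relations into the defining equation $P_q^{n+k+l}=(P_q^n\otimes P_q^k\otimes P_q^l)R_{n,k,l}^{\ast}$ and conclude by uniqueness, which you justify by the strict positivity (hence invertibility, $\dim H<\infty$) of the operators $P_q^m$ due to Bo\.zejko--Speicher. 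Both routes rest on the same factorizations and the same nondegeneracy fact: the paper needs it implicitly for the $q$-spaces and adjoints to make sense (and for its definition of $R_{n,k,l}^{\ast}$ as ``the unique operator'' to be well posed), while you need it explicitly for the uniqueness step. What the paper's formulation buys is the structural interpretation of the $R^{\ast}$ operators as adjoints of identity maps, which is reused immediately afterwards to bound $\Vert R_{n,k}^{\ast}\Vert$ on the deformed spaces; what your formulation buys is a purely algebraic verification that avoids any appeal to the adjoint picture. So the proposal stands as a valid, slightly more computational alternative, with the citation of strict positivity being the only external ingredient.
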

\begin{proof}
The equality $P_q^{n+k} = (P_q^{n}\otimes P_q^{k})R_{n,k}^{\ast}$ means exactly that $R_{n,k}^{\ast}$ is the adjoint of the `identity' map $R_{n,k}\colon H_{q}^{\otimes n} \otimes H_q^{\otimes k} \to H_{q}^{\otimes n+k}$. By the same token $R_{n,k,l}^{\ast}$ is the adjoint of the identity map $R_{n,k,l}\colon H_q^{\otimes n}\otimes H_q^{\otimes k} \otimes H_q^{\otimes l}  \to H_q^{\otimes n+k+l}$, which can be obtained by composition $R_{n+k,l}(R_{n,k}\otimes {\rm Id}_{l})$ or $R_{n,k+l}({\rm Id}_{n} \otimes R_{k,l})$.

\end{proof}
We are ready to state a new formula for a triple product.
\begin{prop}\label{Prop=Tripleproduct}
Let $\bm{\xi}\in H^{\otimes n}$, $\mu\in H^{\otimes m}$, and $\bm{\eta} \in H^{\otimes k}$. Then we have
\begin{equation}\label{Eqn=Tripleproduct}
W(\bm{\xi})W(\bm{\mu})W(\bm{\eta})\Omega = \sum_{j,r,s} q^{r(m-j-s)} m_{r}^{13}m_{s}^{12}m_j^{23}(R_{n-r-s,r,s}^{\ast}(\bm{\xi})\otimes R_{s,m-s-j,j}^{\ast}(\bm{\mu}) \otimes R_{j,r,k-j-r}^{\ast}(\bm{\eta}))
\end{equation}
\end{prop}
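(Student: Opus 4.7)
The plan is to apply the double-product identity \eqref{Eqn=NewWickproduct} twice and then reorganise the resulting data via Lemma \ref{Lem=Splitting}.

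First, apply \eqref{Eqn=NewWickproduct} to $W(\bm{\mu})W(\bm{\eta})\Omega$. This produces the sum over the number $j$ of $\bm{\mu}$--$\bm{\eta}$ pairings, together with the pairing $m_j^{23}$ and the two-index operators $R^{\ast}_{m-j,j}(\bm{\mu})$ and $R^{\ast}_{j,k-j}(\bm{\eta})$. The resulting intermediate vector lies in $H^{\otimes(m-j)}\otimes H^{\otimes(k-j)}$; its first $m-j$ slots come from the surviving $\bm{\mu}$-entries and its last $k-j$ slots from the surviving $\bm{\eta}$-entries.

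Second, apply \eqref{Eqn=NewWickproduct} once more with $\bm{\xi}$ on the left of this intermediate vector, introducing a contraction index $p$, the operators $R^{\ast}_{n-p,p}(\bm{\xi})$ and $R^{\ast}_{p,m+k-2j-p}$ on the intermediate tensor, and the contraction $m_p$. Any size-$p$ subset $B$ selected by the latter $R^{\ast}$ splits canonically as $B_1 \subset [1,m-j]$ of size $s$ and $B_2 \subset [m-j+1,m+k-2j]$ of size $r = p - s$; writing $B_1 = \{b^1_1<\dots<b^1_s\}$ and $B_2 = \{(m-j) + c_l : 1 \leqslant l \leqslant r\}$ with $c_1 < \dots < c_r$, the defining weight of $R^{\ast}_{p,m+k-2j-p}$ factors as
\[
q^{i(B)} \;=\; q^{i(B_1)} \cdot q^{\,r(m-j-s)} \cdot q^{i(B_2)},
\]
since $(m-j+c_l) - s - l = (m-j-s) + (c_l - l)$. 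Summing over $(s,r)$ with $s+r=p$ and over $B_1,B_2$ therefore reorganises $R^{\ast}_{p,m+k-2j-p}$ as a product of an $R^{\ast}$ on the $\bm{\mu}$-block, an $R^{\ast}$ on the $\bm{\eta}$-block, and the explicit scalar $q^{r(m-j-s)}$; simultaneously $m_p$ splits into $m_s^{12}$ (contracting an $s$-subblock of $\bm{\xi}$ with the selected $\bm{\mu}$-entries) and $m_r^{13}$ (contracting an $r$-subblock of $\bm{\xi}$ with the selected $\bm{\eta}$-entries).

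Third, invoke Lemma \ref{Lem=Splitting} to assemble the two layers of two-indexed $R^{\ast}$ operators into the three-indexed operators of \eqref{Eqn=Tripleproduct}: $R^{\ast}_{m-j,j}(\bm{\mu})$ refines into $R^{\ast}_{s,m-s-j,j}(\bm{\mu})$ by splitting its first $(m-j)$-block along $B_1$, $R^{\ast}_{j,k-j}(\bm{\eta})$ refines analogously into $R^{\ast}_{j,r,k-j-r}(\bm{\eta})$, and $R^{\ast}_{n-p,p}(\bm{\xi})$ refines into $R^{\ast}_{n-r-s,r,s}(\bm{\xi})$ by splitting its last $p$-block into an $r$-subblock and an $s$-subblock.

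The main obstacle is precisely this bookkeeping; a convenient consistency check, which could equally serve as an alternative proof, is to expand both sides of \eqref{Eqn=Tripleproduct} via Lemma \ref{Lem=WickProduct}. Partitions $\pi \in P^{\leqslant 2}(n \otimes m \otimes k)$ with prescribed numbers $s, j, r$ of $(1,2)$-, $(2,3)$- and $(1,3)$-pairings are parametrised by the three block decompositions of \eqref{Eqn=Tripleproduct} together with internal matchings, and among the crossings counted by $cr(\pi)$ only the $r(m-j-s)$ degenerate crossings of $(1,3)$-pairs with unpaired middle $\bm{\mu}$-entries fail to be absorbed into either a $q$-symmetrised $R^{\ast}$ or a $q$-deformed inner product, thereby producing the external prefactor $q^{r(m-j-s)}$.
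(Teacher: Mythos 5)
Your overall route is the same as the paper's (iterate the two--factor formula \eqref{Eqn=NewWickproduct}, split the contracted set into a $\bm{\mu}$-part of size $s$ and an $\bm{\eta}$-part of size $r$, extract $q^{r(m-j-s)}$, reassemble with Lemma \ref{Lem=Splitting}), and your factorisation $q^{i(B)}=q^{i(B_1)}q^{r(m-j-s)}q^{i(B_2)}$ is a correct and clean way to account for the scalar prefactor and for the refinements $R^{\ast}_{m-j,j}(\bm{\mu})\rightsquigarrow R^{\ast}_{s,m-s-j,j}(\bm{\mu})$ and $R^{\ast}_{j,k-j}(\bm{\eta})\rightsquigarrow R^{\ast}_{j,r,k-j-r}(\bm{\eta})$. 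However, there is a genuine gap on the $\bm{\xi}$-side. You assert that ``simultaneously $m_p$ splits into $m_s^{12}$ and $m_r^{13}$'' and, separately, that $R^{\ast}_{n-p,p}(\bm{\xi})$ ``refines into $R^{\ast}_{n-r-s,r,s}(\bm{\xi})$ by splitting its last $p$-block''. Neither statement is true on its own: the pairing $m_p$ is taken with respect to the $q$-deformed inner product on $H_q^{\otimes p}$, which is \emph{not} the tensor product of the inner products on $H_q^{\otimes s}$ and $H_q^{\otimes r}$ (they differ by $P_q^{s+r}=(P_q^s\otimes P_q^r)R^{\ast}_{s,r}$), and Lemma \ref{Lem=Splitting} gives $R^{\ast}_{n-r-s,r,s}=(\mathrm{Id}_{n-r-s}\otimes R^{\ast}_{r,s})R^{\ast}_{n-p,p}$, i.e.\ the refinement costs an extra operator $\mathrm{Id}_{n-r-s}\otimes R^{\ast}_{r,s}$ which your bookkeeping never produces. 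The paper's proof supplies exactly the missing link: using $P_q^{s+r}=(P_q^s\otimes P_q^r)R^{\ast}_{s,r}$ together with the order reversal effected by the conjugation $I$, one gets $m_{s+r}=m_r\, m_s\,(R^{\ast}_{r,s}\otimes \mathrm{Id}_{s+r})$, so splitting the pairing is legitimate precisely at the cost of that extra $R^{\ast}_{r,s}$ on the $\bm{\xi}$-side, which then combines with $R^{\ast}_{n-p,p}$ to give $R^{\ast}_{n-r-s,r,s}$. This identity is also what forces the block order $(n-r-s,r,s)$ rather than $(n-r-s,s,r)$; in your write-up the correct order appears, but without any argument for it. So the two assertions you make are individually unjustified and only their (unproven) compensation yields \eqref{Eqn=Tripleproduct}; this is the one step where the real content of the proposition lies.

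Your closing ``consistency check'' (expanding both sides via Lemma \ref{Lem=WickProduct} and matching partitions with prescribed numbers $s,j,r$ of cross-pairings) could indeed be turned into an alternative proof, but as written it is a sketch: the claim that all crossings except the $r(m-j-s)$ degenerate ones are absorbed into the $q$-symmetrised $R^{\ast}$'s and the $q$-deformed inner products is precisely the same bookkeeping, including the $\bm{\xi}$-side correction above, and would have to be carried out rather than asserted.
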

\begin{proof}
The first step is to apply formula \eqref{Eqn=NewWickproduct} to $W(\bm{\mu})W(\bm{\eta})$. It means that we apply the operator $\sum_{j}m_{j}(R_{m-j,j}^{\ast}\otimes R_{j,k-j}^{\ast})$ to $\bm{\mu}\otimes \bm{\eta}$. We denote the result by $\bm{\mu}\otimes_{W}\bm{\eta}=\sum_{j}\bm{\mu}\otimes_{W}^{j} \bm{\eta}$. And then we repeat the procedure to obtain a formula for the triple product, but we have to be careful at this point. If we just used the formula for a double product, we would have to apply the operator $\sum_{p} m_{p}(R_{n-p,p}^{\ast}\otimes R_{p,m+k-2j}^{\ast})$ to $\bm{\xi}\otimes (\bm{\mu}\otimes_{W}^{j}\bm{\eta})$ and sum over $j$. Now the tensors $\bm{\mu}$ and $\bm{\eta}$ are mixed and we do not want to let that happen. So we will pair a subset of $[n]$ with a subset of $[m+k-2j]$ (with some elements removed by the first pairing) but we will split the latter into parts lying in $[m]$ and $[k]$; say they have cardinalities $s$ and $r$ (so $p=r+s$). The issue is that we have to move this whole subset to the left of $[m+k-2j]$, pair with the corresponding subset of $[n]$ and compute the inner product in $H_q^{\otimes s+r}$, which is not what we want; we prefer $H_q^{\otimes s}\otimes H_q^{\otimes r}$. Because of the formula $P_q^{s+r} = (P_q^{s}\otimes P_q^r)R^{\ast}_{s,r}$, we can achieve this goal by applying $({\rm Id}_{n-r-s} \otimes R^{\ast}_{s,r})$ to $R_{n-p,p}^{\ast}(\bm{\xi})$. Note that our pairing $m$ involves a second ingredient, the complex conjugation $I$, which reverses the order of the tensor (i.e. $I(v_1\otimes\dots\otimes v_n) = Iv_n\otimes\dots\otimes Iv_1$). So we should consider the operator $I P_q^{s+r}$ instead. To make things easier, we will denote by $I^{j}$ the conjugation $I$ acting on $H^{\otimes j}$. We have to compute $I^{s+r} P_q^{s+r} = I^{s+r}(P_q^{s}\otimes P_q^r) R_{s,r}^{\ast}$. Because $I^{s+r}$ reverses the order, we get $I^{s+r}(P_q^s\otimes P_q^r)R_{s,r}^{\ast} = (I^{r}P_q^{r} \otimes I^{s}P_q^{s})R_{r,s}^{\ast}$. Thus we can write $m_{s+r} = m_r m_s (R_{r,s}^{\ast}\otimes {\rm Id}_{s+r})$. It follows that we can replace the pairing $m_{s+r}$ by $m_r m_s$ at the cost of applying ${\rm Id}_{n-r-s}\otimes R_{r,s}^{\ast}$ to $R_{n-p,p}^{\ast}(\bm{\xi})$; by Lemma \ref{Lem=Splitting} this  is the same as $R^{\ast}_{n-r-s,r,s}(\bm{\xi})$.

Look now at the part coming from $\bm{\mu}\otimes_{W}^{j} \bm{\eta}$; we have a tensor of rank $s$ coming from $\bm{\mu}$ and a tensor of rank $r$ coming from $\bm{\eta}$. We have to think about the cost of moving it to the left of $[m+k-2j]$. The $\bm{\mu}$ part we can simply move to the left and it is accounted for by applying the operator $R_{s,m-j-s}^{\ast}$. The $\bm{\eta}$ part we move to the left of $[k-j]$ using the operator $R_{r,k-j-r}^{\ast}$ and then we have to move it to be adjacent to the $\bm{\mu}$ part. There are $m-j-s$ nodes that we have to cross, and the subset has cardinality $r$, so the cost is equal to $r(m-j-s)$. It means that our formula can be written as
\[
\sum_{j,r,s} q^{r(m-j-s)} m_{r}^{13}m_{s}^{12} (R_{m-r-s,r,s}^{\ast}(\bm{\xi})\otimes (R_{s,m-j-s}^{\ast}\otimes R_{r,k-j-r}^{\ast})(\bm{\mu}\otimes_{W}^{j}\bm{\eta})).
\]
It is not yet exactly the formula \eqref{Eqn=Tripleproduct}, but it is very close to it. Recall that $\bm{\mu}\otimes_{W}^{j}\bm{\eta}$ is equal to $m_{j}(R_{m-j,j}^{\ast}(\bm{\mu})\otimes R^{\ast}_{j,k-j}(\bm{\eta}))$. So we arrive at
\[
(R_{s,m-j-s}^{\ast}\otimes R_{r,k-j-r}^{\ast})(m_{j}(R_{m-j,j}^{\ast}(\bm{\mu})\otimes R^{\ast}_{j,k-j}(\bm{\eta}))).
\]
Note that we can either apply the pairing immediately, just like in the formula above, but we can also first apply identities on the spaces that will be paired, and apply the pairing afterwards. Thus we get
\[
m_j((R_{s,m-j-s}^{\ast}\otimes {\rm Id}_j)R_{m-j,j}^{\ast}(\bm{\mu}) \otimes ({\rm Id}_j\otimes R_{r,k-j-r}^{\ast})R_{j,k-j}^{\ast}(\bm{\eta})).
\]
By Lemma \ref{Lem=Splitting}, we get $(R_{s,m-j-s}^{\ast}\otimes {\rm Id}_j)R_{m-j,j}^{\ast} = R_{s,m-j-s,j}^{\ast}$ and $({\rm Id}_j\otimes R_{r,k-j-r}^{\ast})R_{j,k-j}^{\ast}= R_{j,r,k-j-r}^{\ast}$. In order to finish the proof, it suffices to add the decoration $23$ to $m_j$ to make it $m_j^{23}$, which exactly matches the formula \eqref{Eqn=Tripleproduct}.
\end{proof}
\begin{rmk}
In order to pass to the formula for $\Psi^{a,b}(\bm{\mu})$ we just need to multiply each summand in \eqref{Eqn=Tripleproduct} by $-2r$.
\end{rmk}
In order to finish the proof of Proposition \ref{Prop=Normestimate}, we just need bounds for the inner product pairings and the operators $R_{n,k}^{\ast}$.
\begin{lem}
Let $H$ be a Hilbert space. The inner product pairing $m\colon \overline{H}\otimes H \to \mathbb{C}$ has norm  $\sqrt{ \dim(H)}$.
\end{lem}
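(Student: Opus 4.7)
The plan is to work in a real orthonormal basis for $H$ so that the conjugation $I$ becomes trivial, then reduce the statement to a standard Cauchy--Schwarz estimate on the ``diagonal'' entries of a matrix. Write $N = \dim(H)$ and pick an orthonormal basis $e_1,\dots,e_N$ of the underlying real Hilbert space $\RH$ so that $I e_i = e_i$ for all $i$. For $v = \sum_i v_i e_i$ and $w = \sum_i w_i e_i$ in $H$, a direct computation shows
\[
m(v\otimes w) = \langle Iv, w\rangle = \sum_i v_i w_i,
\]
so $m$ is nothing but the ``trace pairing'' after identifying $H\otimes H$ with $N\times N$ matrices via $e_i\otimes e_j \leftrightarrow E_{ij}$.

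For a general element $\zeta = \sum_{i,j} c_{ij}\, e_i\otimes e_j \in H\otimes H$ we then have $m(\zeta) = \sum_i c_{ii}$ and $\|\zeta\|^2 = \sum_{i,j} |c_{ij}|^2$. The Cauchy--Schwarz inequality applied to the vector $(c_{ii})_{i=1}^N$ against the all-ones vector gives
\[
|m(\zeta)|^2 = \Bigl|\sum_{i=1}^N c_{ii}\Bigr|^2 \leqslant N \sum_{i=1}^N |c_{ii}|^2 \leqslant N \sum_{i,j} |c_{ij}|^2 = N\,\|\zeta\|^2,
\]
which yields $\|m\| \leqslant \sqrt{N}$.

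For the reverse inequality, I would exhibit a unit vector saturating the bound. Taking $\zeta_0 := N^{-1/2} \sum_{i=1}^N e_i \otimes e_i$, one checks $\|\zeta_0\| = 1$ and $m(\zeta_0) = N^{-1/2} \cdot N = \sqrt{N}$, so $\|m\| \geqslant \sqrt{N}$. Combining the two bounds gives $\|m\| = \sqrt{\dim(H)}$. There is no real obstacle here; the only mild subtlety is just to record that $\RH$ has a real orthonormal basis and that its $\mathbb{C}$-linear extension to $H$ remains orthonormal and is fixed by the conjugation $I$, which is what allows the diagonalisation of $m$.
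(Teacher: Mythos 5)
Your proof is correct and follows essentially the same route as the paper: a Cauchy--Schwarz estimate on the diagonal coefficients for the upper bound, and the (normalised) vector $\sum_i e_i\otimes e_i$ to show the bound $\sqrt{\dim(H)}$ is attained. The extra care you take with a real orthonormal basis fixed by the conjugation $I$ is a harmless elaboration of the same argument.
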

\begin{proof}
Let $(e_{i})_{i\in I}$ be an orthonormal basis of $H$. Then the norm of $v:=\sum_{i\in I} \overline{e_i} \otimes e_{i}$ is equal to $\sqrt{|I|}$, while $m(v)=|I|$, so the norm is at least $\sqrt{|I|}$. On the other hand, a simple Cauchy-Schwarz estimate shows that it is not larger than that.
\end{proof}
\begin{lem}
Recall that the norm of $R_{n,k}^{\ast}\colon H^{\otimes n+k} \to H^{\otimes n+k}$ is bounded by $C(q)$. Then the norm $R_{n,k}^{\ast}: H_{q}^{\otimes n+k} \to H_q^{\otimes n} \otimes H_q^{\otimes k}$ is not greater than $\sqrt{C(q)}$.
\end{lem}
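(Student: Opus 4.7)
The plan is to pass to the Hilbert-space adjoint. On the common underlying vector space $E:=H^{\otimes n+k}$, the operator $R_{n,k}^{\ast}$ is precisely the adjoint, with respect to the $q$-deformed inner products, of the identity map $R_{n,k}\colon H_q^{\otimes n}\otimes H_q^{\otimes k}\to H_q^{\otimes n+k}$, by the very definition and the preceding lemma. Since taking adjoints preserves operator norms, it suffices to bound $\|R_{n,k}\|$. Writing $B:=P_q^n\otimes P_q^k$ and $C:=P_q^{n+k}$ -- both strictly positive operators on $E$ for $|q|<1$ by the Bo\.zejko--Speicher positivity theorem -- I would first express
\[
\|R_{n,k}\|^2 \;=\; \sup_{v\neq 0}\frac{\langle Cv,v\rangle}{\langle Bv,v\rangle}
\]
with respect to the unweighted inner product on $E$, and then substitute $v=B^{-1/2}w$ to identify this supremum with $\|B^{-1/2}CB^{-1/2}\|_{\cB(E)}$.

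The key step is then to use the defining relation $C=B\,R_{n,k}^{\ast}$ to rewrite
\[
B^{-1/2}CB^{-1/2}=B^{1/2}R_{n,k}^{\ast}B^{-1/2},
\]
which exhibits $B^{-1/2}CB^{-1/2}$ as similar, as an endomorphism of $E$, to $R_{n,k}^{\ast}$. Similar operators share their spectrum, and since $B^{-1/2}CB^{-1/2}$ is positive self-adjoint its $\cB(E)$-norm coincides with its spectral radius. Hence this norm is at most the spectral radius of $R_{n,k}^{\ast}$, which is in turn dominated by $\|R_{n,k}^{\ast}\|_{\cB(E)}\leqslant C(q)$. Taking square roots yields the required estimate $\sqrt{C(q)}$.

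I do not foresee a real obstacle here; the argument is essentially a one-line spectral computation. The only point requiring some attention is the invertibility of $B$, which is guaranteed for $|q|<1$ by the strict positivity of each $P_q^n$ -- precisely the regime in which the lemma is to be applied. Should one wish to avoid invoking strict positivity, the same computation carries over on the orthogonal complement of $\ker B=\ker C$, which is the only place where the operators act nontrivially.
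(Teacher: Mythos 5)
Your argument is correct, and it reaches the bound by a genuinely (if modestly) different mechanism than the paper. Both proofs start from the same two inputs: $R_{n,k}^{\ast}$ is the adjoint of the identity map $R_{n,k}\colon H_q^{\otimes n}\otimes H_q^{\otimes k}\to H_q^{\otimes n+k}$, so it suffices to bound $\|R_{n,k}\|$, and $P_q^{n+k}=(P_q^{n}\otimes P_q^{k})R_{n,k}^{\ast}$ with $\|R_{n,k}^{\ast}\|_{\cB(H^{\otimes n+k})}\leqslant C(q)$. The paper turns this into the operator inequality $P_q^{n+k}\leqslant C(q)\,P_q^{n}\otimes P_q^{k}$ by noting that $A=BT$ with $A,B\geqslant 0$ gives $A^{2}=BTT^{\ast}B\leqslant \|T\|^{2}B^{2}$ and then using operator monotonicity of the square root; the resulting quadratic-form comparison $\langle P_q^{n+k}v,v\rangle\leqslant C(q)\langle (P_q^{n}\otimes P_q^{k})v,v\rangle$ is exactly $\|R_{n,k}\|\leqslant\sqrt{C(q)}$. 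You instead conjugate: with $B:=P_q^{n}\otimes P_q^{k}$ invertible (strict positivity for $|q|<1$), you identify $\|R_{n,k}\|^{2}=\|B^{-1/2}P_q^{n+k}B^{-1/2}\|$, observe that $B^{-1/2}P_q^{n+k}B^{-1/2}=B^{1/2}R_{n,k}^{\ast}B^{-1/2}$ is positive and similar to $R_{n,k}^{\ast}$, and conclude that its norm equals the spectral radius of $R_{n,k}^{\ast}$, hence is at most $C(q)$. Your route requires invertibility of $B$ (harmless in the regime at hand), whereas the paper's monotonicity argument does not; in exchange you obtain the slightly sharper identity $\|R_{n,k}\|^{2}=r(R_{n,k}^{\ast})$ rather than just the bound by $\|R_{n,k}^{\ast}\|_{\cB(H^{\otimes n+k})}$. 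One small caveat: your parenthetical fallback claim that $\ker B=\ker C$ in the non-invertible case is not justified (and not needed), so it is cleaner to simply invoke invertibility for $|q|<1$.
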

\begin{proof}
Because of the formula $P_q^{n+k} = (P_q^{n}\otimes P_q^{k})R_{n,k}^{\ast}$, we get $P_{q}^{n+k} \leqslant C(q) P_q^{n}\otimes P_q^{k}$. Indeed, if we have an equality $A=BT$ for positive operators $A$ and $B$ then $A^2 = B TT^{\ast}B \leqslant \|T\|^2 B^2$, hence $A\leqslant \|T\|B$, as square root is an operator monotone function. The majorisation $P_{q}^{n+k} \leqslant C(q) P_q^{n}\otimes P_q^{k}$ shows that $\|R_{n,k}\|\leqslant \sqrt{C(q)}$, therefore also $\|R_{n,k}^{\ast}\|\leqslant C(q)$.
\end{proof}
\begin{cor}
We have $\|R_{n,k,l}^{\ast}\|\leqslant C(q)$, where $R_{n,k,l}^{\ast}$ is viewed as an operator from $H_q^{\otimes n+k+l}$ to $H_q^{\otimes n}\otimes H_q^{\otimes k} \otimes H_q^{\otimes l}$.
\end{cor}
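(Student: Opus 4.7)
The plan is to derive the bound by composing the two-factor estimate from the preceding lemma with itself, using the splitting identity from Lemma \ref{Lem=Splitting}. By Lemma \ref{Lem=Splitting} we can write
\[
R_{n,k,l}^{\ast} = (R_{n,k}^{\ast} \otimes \mathrm{Id}_l)\, R_{n+k,l}^{\ast},
\]
so it suffices to control each factor in the composition separately.

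First I would invoke the previous lemma to bound
\[
R_{n+k,l}^{\ast} \colon H_q^{\otimes n+k+l} \longrightarrow H_q^{\otimes n+k} \otimes H_q^{\otimes l}
\]
by $\sqrt{C(q)}$. Next, applying the same lemma to the two-factor splitting on the first tensor factor, I would bound
\[
R_{n,k}^{\ast} \colon H_q^{\otimes n+k} \longrightarrow H_q^{\otimes n} \otimes H_q^{\otimes k}
\]
by $\sqrt{C(q)}$ as well. Since Hilbertian tensor products of operators satisfy $\|T \otimes \mathrm{Id}\| = \|T\|$, the operator
\[
R_{n,k}^{\ast} \otimes \mathrm{Id}_l \colon H_q^{\otimes n+k} \otimes H_q^{\otimes l} \longrightarrow H_q^{\otimes n} \otimes H_q^{\otimes k} \otimes H_q^{\otimes l}
\]
also has norm bounded by $\sqrt{C(q)}$. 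Multiplying the two bounds yields $\|R_{n,k,l}^{\ast}\| \leqslant \sqrt{C(q)} \cdot \sqrt{C(q)} = C(q)$, as desired.

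There is essentially no obstacle here: the key technical content is already packaged in Lemma \ref{Lem=Splitting} (which expresses $R_{n,k,l}^{\ast}$ as a composition reducing to the two-factor case) and in the previous lemma (which gives the two-factor norm bound $\sqrt{C(q)}$ on $q$-deformed spaces). The only mild care required is to ensure that the intermediate space $H_q^{\otimes n+k} \otimes H_q^{\otimes l}$ is the correct codomain/domain for the two factors so that the composition makes sense; this matches exactly the way $R_{n,k,l}^{\ast}$ is split in Lemma \ref{Lem=Splitting}. One could equally well use the other splitting $R_{n,k,l}^{\ast} = (\mathrm{Id}_n \otimes R_{k,l}^{\ast}) R_{n,k+l}^{\ast}$ to obtain the same bound by the symmetric argument.
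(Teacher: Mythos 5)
Your proof is correct and follows exactly the paper's argument: split $R_{n,k,l}^{\ast}$ via Lemma \ref{Lem=Splitting} as $(R_{n,k}^{\ast}\otimes \mathrm{Id}_l)R_{n+k,l}^{\ast}$ and apply the two-factor bound $\sqrt{C(q)}$ from the preceding lemma to each factor. Your write-up just makes explicit the composition and the fact that $\Vert T\otimes \mathrm{Id}\Vert = \Vert T\Vert$, which the paper leaves implicit.
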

\begin{proof}
By Lemma \ref{Lem=Splitting} we can express $R_{n,k,l}^{\ast}$ as $(R_{n,k}^{\ast}\otimes {\rm Id}_{l}) R_{n+k,l}^{\ast}$, and then we can appeal to the lemma above.
\end{proof}
\begin{proof}[Proof of Proposition \ref{Prop=Normestimate}]
Let $\bm{\mu} \in H_q^{\otimes m}$ and consider $\Psi^{a,b}(\bm{\mu})$. According to Proposition \ref{Prop=Tripleproduct} we have
\[
\Psi^{a,b}(\bm{\mu}) = -2\sum_{j,r,s}rq^{r(m-j-s)} m_{r}^{13}m_s^{12}m_j^{23}(R_{n-r-s,r,s}^{\ast}(\bm{\xi})\otimes R_{s,m-s-j,j}^{\ast}(\bm{\mu}) \otimes R_{j,r,k-j-r}^{\ast}(\bm{\eta})).
\]
We may assume that $r\geqslant 1$ (otherwise the corresponding summand is equal to zero), so the (absolute value of the) factor $q^{r(m-j-s)}$ is bounded above by $|q|^m$, up to a constant depending on $q$, $n$, and $k$. As each of $j$, $r$, and $s$ is bounded by either $n$ or $k$, the range of summation is finite, so it suffices to bound uniformly the norm of each summand. If we treat $R_{n,k}^{\ast}$ as a map from $H_q^{\otimes n+k}$ to $H_q^{\otimes n}\otimes H_q^{\otimes k}$, then each summand in \eqref{Eqn=Tripleproduct} should be thought of as an element of the space $H_{q}^{\otimes n-r-s} \otimes H_q^{\otimes m-s-j} \otimes H_q^{\otimes k-j-r}$ and this norm is bounded by a constant depending on $q$ and $\dim(H)$ (coming from the inner product pairing). Because the norm of the identity map $R_{n-r-s,m-s-j, k-j-r}\colon H_{q}^{\otimes n-r-s} \otimes H_q^{\otimes m-s-j} \otimes H_q^{\otimes k-j-r} \to H_q^{\otimes m+n+k - 2(j+r+s)}$ has norm at most $C(q)$, the norm in $H_q^{\otimes m+n+k - 2(j+r+s)}$ is bounded uniformly as well, which ends the proof.
\end{proof}

\begin{thm}\label{Thm=QGaussianSp}
Suppose that   $\vert q \vert < \dim(H)^{-\frac{1}{p}}$. Then the Ornstein-Uhlenbeck quantum Markov semi-group	$(\Phi_t)_{t \geq 0}$ is gradient-$\cS_{p}$. If $\vert q \vert \leqslant  \dim(H)^{-\frac{1}{p}}$ then $(\Phi_t)_{t \geq 0}$ is immediately gradient-$\cS_{p}$.
\end{thm}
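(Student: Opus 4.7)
The plan is to estimate the Schatten $\cS_p$-norm of $\Psi^{a,b}$ level-by-level on the Fock decomposition $F_q(H)=\bigoplus_{m\geq 0}H_q^{\otimes m}$. Since $(a,b)\mapsto \Psi^{a,b}$ is bilinear and $\cA_q$ is spanned by elementary Wick operators, it suffices to treat $a=W(\bm{\xi})$ and $b=W(\bm{\eta})$ with $\bm{\xi}\in H^{\otimes n}$, $\bm{\eta}\in H^{\otimes k}$. For such $a,b$, Proposition \ref{Prop=Normestimate} supplies the key operator-norm bound $\|\Psi^{a,b}|_{H_q^{\otimes m}}\|\leqslant C(q,\bm{\xi},\bm{\eta},\dim(H))\,|q|^m$.

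Since $H_q^{\otimes m}$ has dimension $\dim(H)^m$, the restriction $\Psi^{a,b}|_{H_q^{\otimes m}}$ has rank at most $\dim(H)^m$. Combining the elementary inequality $\|T\|_{\cS_p}\leqslant (\mathrm{rank}\,T)^{1/p}\|T\|$ with the above estimate yields
$$\|\Psi^{a,b}|_{H_q^{\otimes m}}\|_{\cS_p} \leqslant C\bigl(\dim(H)^{1/p}|q|\bigr)^m.$$
Writing $\Psi^{a,b}=\sum_m \Psi^{a,b}P_m$ with $P_m$ the projection onto $H_q^{\otimes m}$ and applying the triangle inequality in $\cS_p$, the hypothesis $|q|<\dim(H)^{-1/p}$ makes the geometric series summable, so the partial sums form a Cauchy sequence in $\cS_p$. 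Completeness of $\cS_p$ then provides a bounded extension of $\Psi^{a,b}$ to $L_2(\Gamma_q)$ lying in $\cS_p$, establishing the gradient-$\cS_p$ claim.

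For the endpoint case $|q|\leqslant \dim(H)^{-1/p}$ of the immediately gradient-$\cS_p$ property, I use the identity $\Psi^{a,b}_t=\Phi_t\circ\Psi^{a,b}$ and extract further decay from $\Phi_t$. Inspection of the formula in Proposition \ref{Prop=Tripleproduct} shows that the image of $\Psi^{a,b}|_{H_q^{\otimes m}}$ sits in levels $H_q^{\otimes l}$ with $l=n+m+k-2(j+r+s)$, subject to $r\geqslant 1$ together with $r+s\leqslant n$ and $r+j\leqslant k$; these constraints force $l\geqslant m-n-k+2$. Since $\Phi_t$ acts as multiplication by $e^{-tl}$ on $H_q^{\otimes l}$, composition with $\Phi_t$ contributes an additional factor $e^{-t(m-n-k+2)}$, giving
$$\|\Psi^{a,b}_t|_{H_q^{\otimes m}}\|_{\cS_p} \leqslant C'\,e^{-tm}\bigl(\dim(H)^{1/p}|q|\bigr)^m,$$
which, for $|q|\leqslant \dim(H)^{-1/p}$ and $t>0$, is dominated by $C'e^{-tm}$ and hence summable in $m$.

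The only genuine difficulty was already absorbed into Proposition \ref{Prop=Normestimate}. What remains is the rank-to-Schatten comparison, a geometric-series summation, and a brief reading of the formula in Proposition \ref{Prop=Tripleproduct} to pin down the output levels and unlock the endpoint via the Ornstein--Uhlenbeck cooling factor.
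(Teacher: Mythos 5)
Your proposal is correct and follows essentially the same route as the paper: level-wise restriction to $H_q^{\otimes m}$, the operator-norm bound of Proposition \ref{Prop=Normestimate}, the dimension-to-Schatten comparison $\|T\|_p\leqslant \dim(H)^{m/p}\|T\|$, a geometric series for $|q|<\dim(H)^{-1/p}$, and the extra $e^{-tm}$ decay from $\Phi_t$ at the endpoint. Your explicit bookkeeping of the output levels $l=n+m+k-2(j+r+s)\geqslant m-n-k+2$ in Proposition \ref{Prop=Tripleproduct} is a welcome detail that the paper's endpoint computation leaves implicit (with the constant absorbing the $t$-dependent factor $e^{t(n+k)}$), but the argument is the same.
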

\begin{proof}
We we will estimate the $\cS_{p}$ norm of $\Psi^{a,b}$ by $\sum_{m=0}^{\infty} \|\Psi^{a,b}_{|H_q^{\otimes m}}\|_{p}$. Because $\dim(H_q^{\otimes m}) = \dim(H)^{m}$, we may estimate $\|\Psi^{a,b}_{|H^{\otimes m}}\|_{p}$ by $\dim(H)^{\frac{m}{p}}\|\Psi^{a,b}_{|H_q^{\otimes m}}\|$. The bound for this norm is provided by Proposition \ref{Prop=Normestimate}, so we get
\[
\|\Psi^{a,b}\|_{p} \leqslant C(q,\bm{\xi},\bm{\eta},\dim(H)) \sum_{m=0}^{\infty} \left(|q| \dim(H)^{\frac{1}{p}}\right)^{m}.
\] 	
This series is convergent if $|q|< \dim(H)^{-\frac{1}{p}}$.

If $|q| = \dim(H)^{-\frac{1}{p}}$, then we want to show that the maps $\Psi^{a,b}_{t}$ are in $\cS_p$ for any $t>0$. Running again the same computation, we get the estimate
\begin{align*}
\|\Psi_{t}^{a,b}\|_{p} &\leqslant C(q,\bm{\xi},\bm{\eta},\dim(H)) \sum_{m=0}^{\infty}e^{-tm} \left(|q| \dim(H)^{\frac{1}{p}}\right)^{m} \\
&= C(q,\bm{\xi},\bm{\eta},\dim(H)) \sum_{m=0}^{n} e^{-tm},
\end{align*}
and this series is convergent.
\end{proof}

\subsection{Consequences for Cartan rigidity}

We gather some Cartan rigidity properties; these were already obtained in an unpublished manuscript by Avsec \cite{Avsec}, where they were proved for any $-1 < q < 1$. Our cohomological properties in the next section are new however.  Recall that we assumed that $\dim(\RH) < \infty$.

We recall that a von Neumann algebra $M$ has the W$^\ast$CMAP (weak-$\ast$ complete metric approximation property) if there exists a net $\Phi_i: M \rightarrow M$ of completely contractive normal finite rank maps such that for every $x \in M$ we have $\Phi_i(x) \rightarrow x$ $\sigma$-weakly and $\limsup_i \Vert \Phi_i: M \rightarrow M \Vert_{cb} = 1$.

\begin{thm}
	For any  $\RH$ and  $\vert q \vert \leqslant  \dim(H)^{-\frac{1}{2}}$ the $q$-Gaussian algebra $\Gamma_q(\RH)$ is strongly solid.
\end{thm}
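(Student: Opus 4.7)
The plan is to combine the Akemann--Ostrand property established in Theorem C of the introduction with the weak-$\ast$ complete metric approximation property (W$^\ast$CMAP), which is already known for $q$-Gaussian algebras, and then invoke the general strong solidity machinery. Strong solidity results of Popa--Vaes \cite{PopaVaesCrelle} and Isono \cite{IsonoTransactions} say that a II$_1$ factor satisfying condition AO$^+$ together with the W$^\ast$CMAP is automatically strongly solid; so the task reduces to verifying these two hypotheses in the specified range.

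First, I would invoke Theorem C: for $|q| \leqslant \dim(\RH)^{-1/2}$ the $q$-Gaussian algebra $\Gamma_q(\RH)$ has condition AO$^+$ in the sense of \cite{IsonoTransactions}. This follows from the results of Section \ref{Sect=AO} together with Theorem \ref{Thm=QGaussianSp} (which provides the immediately gradient-$\cS_{2}$ property at the boundary $|q| = \dim(\RH)^{-1/2}$, a point worth noting since the strict inequality version is not sufficient at the endpoint). The Cipriani--Sauvageot derivation built from the Ornstein--Uhlenbeck semi-group will be the one to which the criteria of Section \ref{Sect=AO} are applied, with the radial/filtration hypothesis satisfied by the natural grading of the $q$-Fock space.

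Next, I would quote that $\Gamma_q(\RH)$ has the W$^\ast$CMAP. This is a classical result: it follows for instance from the work of Avsec \cite{Avsec}, or by combining the Haagerup property of the Ornstein--Uhlenbeck semi-group with a standard radial multiplier transfer argument. Since $\Gamma_q(\RH)$ is a II$_1$ factor by \cite{RicardCMP}, it fits the hypotheses of the strong solidity theorem. Applying the Popa--Vaes/Isono theorem then yields strong solidity of $\Gamma_q(\RH)$ for $|q| \leqslant \dim(\RH)^{-1/2}$.

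The main conceptual obstacle is nothing new at this stage: the hard work has been front-loaded into establishing AO$^+$ via Theorems A and B together with Section \ref{Sect=AO}. The only subtlety in the proof itself is to ensure the boundary case $|q| = \dim(\RH)^{-1/2}$ is genuinely covered, which relies on the \emph{immediately} gradient-$\cS_2$ version of Theorem \ref{Thm=QGaussianSp} rather than the open-inequality version; this strengthening is precisely what allows AO$^+$, and hence strong solidity, to hold up to and including the endpoint.
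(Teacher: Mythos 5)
Your proposal is correct in substance, but it is not the route the paper takes for this theorem. The paper proves strong solidity \emph{before} and independently of AO$^+$: it takes the Cipriani--Sauvageot derivation $\partial_1\colon \cA_q \to L^2(\Gamma_q(\RH))_\nabla$ of Corollary \ref{Cor=Derivation}, notes it is real and proper (since $\partial^\ast\overline{\partial}=\Delta$ has compact resolvent), uses Theorem \ref{Thm=QGaussianSp} to get (immediately) gradient-$\cS_2$, deduces that $L^2(\Gamma_q(\RH))_\nabla$ carries normal $\Gamma_q$-$\Gamma_q$ actions and is weakly contained in the coarse bimodule (Theorem \ref{Thm=IGHS-GC} for $|q|<\dim(H)^{-1/2}$, and \cite[Proposition 4.3]{CaspersGradient} together with Proposition \ref{Prop=NormalExtension} at the endpoint), and then concludes by the derivation version of \cite[Corollary B]{OzawaPopaAJM} from \cite[Appendix A]{CaspersGradient}, combined with the W$^\ast$CMAP from \cite{Avsec}. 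Your route instead quotes Theorem C (AO$^+$ for $|q|\leqslant \dim(\RH)^{-1/2}$) and the Popa--Vaes/Isono theorem that AO$^+$ plus W$^\ast$CMAP gives strong solidity; this is exactly the corollary stated at the end of Section \ref{Sect=AO}, so it is a legitimate, non-circular alternative (the proof of AO$^+$ nowhere uses strong solidity). The trade-off: your argument leans on the heavier Section \ref{Sect=AO} machinery (filtration, subexponential growth, the compactness of the operators $T_{x,y}$, and local reflexivity of the C$^\ast$-algebra $A$), whereas the paper's proof needs only the weak containment of the gradient bimodule in the coarse bimodule plus properness and reality of the derivation; conversely, your route yields strong solidity as a byproduct of the stronger structural statement AO$^+$. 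You are right that the endpoint $|q|=\dim(\RH)^{-1/2}$ is the delicate point and that it is handled by the \emph{immediately} gradient-$\cS_2$ statement (plus the normality of the bimodule actions from Proposition \ref{Prop=NormalExtension}), in both approaches.

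One small inaccuracy: the W$^\ast$CMAP of $\Gamma_q(\RH)$ does not follow from the Haagerup property of the Ornstein--Uhlenbeck semi-group by a ``radial multiplier transfer argument''; the Haagerup property is strictly weaker than the CMAP and Avsec's proof is a genuinely nontrivial multiplier estimate. Since you cite \cite{Avsec} as the primary source, as the paper does, this does not affect the validity of your argument, but the alternative justification should be dropped.
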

\begin{proof}
	By \cite{Avsec} we see that $\Gamma_q(\RH)$ has the W$^*$CMAP.  In Corollary \ref{Cor=Derivation}, we have constructed a closable derivation $\partial(=\partial_1): \cA_q \rightarrow L^2(\Gamma_q(\RH))_{\nabla}$.  This derivation is real by \cite[Lemma 3.10]{CaspersGradient} and proper as $\partial^\ast \overline{\partial} = \Delta$ and $\Delta$ has compact resolvent. Further, by Theorem \ref{Thm=QGaussianSp} we find that  $(\Phi_t)_{t \geq 0}$ is immediately gradient-$\cS_2$ and gradient $\cS_2$ if $\vert q\vert <  \dim(H)^{-\frac{1}{2}}$.

 In case  $\vert q \vert <  \dim(H)^{-\frac{1}{2}}$  from Theorem \ref{Thm=IGHS-GC} we see that the left and right $\cA_q$ actions on  $L^2(\Gamma_q(\RH))_{\nabla}$ extend to normal $\Gamma_q(\RH)$ actions. Moreover, the thus-obtained $\Gamma_q(\RH)$-$\Gamma_q(\RH)$  bimodule  $L^2(\Gamma_q(\RH))_{\nabla}$
is weakly contained in the coarse bimodule of $\Gamma_q(\RH)$. Then \cite[Corollary B]{OzawaPopaAJM} implies the result except that we need a reformulation of \cite[Corollary B]{OzawaPopaAJM}  in terms of derivations instead of cocycles; that version can be found in \cite[Appendix A]{CaspersGradient}   (even for stable normalizers).

For arbitrary $\vert q \vert \leq  \dim(H)^{-\frac{1}{2}}$ the proof follows in the same way using \cite[Proposition 4.3]{CaspersGradient} instead of  Theorem \ref{Thm=IGHS-GC}; one only needs to prove that the left and right $\cA_q$ actions on  $L^2(\Gamma_q(\RH))_{\nabla}$  extend to normal $\Gamma_q(\RH)$ actions. We do this in Proposition \ref{Prop=NormalExtension}.
\end{proof}

\begin{prop}\label{Prop=NormalExtension}
	The left and right $\mathcal{A}_q$-actions on  $L^2(\Gamma_q(\RH))_{\nabla}$  extend to commuting normal $\Gamma_q$-actions.
\end{prop}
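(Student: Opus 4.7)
The plan is to verify that every vector state $x \mapsto \langle x \cdot \zeta, \zeta\rangle$ on an algebraic vector $\zeta \in L^2(\Gamma_q(\RH))_\nabla$ extends to a normal state on $\Gamma_q(\RH)$; by the standard criterion for normal extensions of $*$-representations, this will yield a normal extension of the left $\mathcal{A}_q$-action. The right action is easier: for $\zeta = \sum_i \lambda_i (a_i \notimes c_i\Omega)$ algebraic, a direct computation gives $\langle \zeta \cdot y, \zeta\rangle = \tau(Zy)$ with $Z := \sum_{i,j} \lambda_i\overline{\lambda_j} c_j^* \Gamma(a_i, a_j) c_i \in \mathcal{A}_q \subset \Gamma_q(\RH)$, which is manifestly normal in $y$. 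Commutativity of the two normal extensions then follows from commutativity on the $\sigma$-weakly dense subalgebra $\mathcal{A}_q$ and separate normality in each variable.

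The hard part is the left action. The key input I need is that, for every $a, b \in \mathcal{A}_q$, the map $\Psi^{a,b}$ from Definition \ref{Dfn=GradientSp} extends to a \emph{bounded} operator on $L^2(\Gamma_q(\RH))$, even at the boundary $|q|=\dim(\RH)^{-1/2}$ where it need not be Schatten-class. To prove this, I would refine the proof of Proposition \ref{Prop=Normestimate} by isolating individual summands: the triple product formula \eqref{Eqn=Tripleproduct}, combined with the Laplacian cancellation of Proposition \ref{Prop=GradientQComp}, decomposes $\Psi^{a,b}$ (for $a = W_q(\bm{\xi})$, $b = W_q(\bm{\eta})$ of lengths $n$ and $k$) into a finite sum of shift operators $T_{j,r,s}$, each sending $H_q^{\otimes m}$ into the \emph{single} subspace $H_q^{\otimes m + n + k - 2(j+r+s)}$ with $\|T_{j,r,s}|_{H_q^{\otimes m}}\| \leq C(q,a,b,\dim \RH)|q|^m$. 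Since the image subspaces for distinct $m$ are then mutually orthogonal in $F_q(H)$, each $T_{j,r,s}$ extends to a bounded operator of norm at most $C$ on the whole Fock space (using $|q|\leq 1$), and hence so does the finite sum $\Psi^{a,b}$.

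With boundedness in hand, for algebraic $\zeta = \sum_i \lambda_i(a_i \notimes c_i \Omega)$, Lemma \ref{Lem=GradientComp} yields
\[
\langle x \cdot \zeta, \zeta \rangle = \sum_{i,j} \lambda_i \overline{\lambda_j} \langle \Psi^{a_j^*, a_i}(x) c_i \Omega, c_j \Omega \rangle.
\]
Each summand rewrites as $\langle S_{ij}(x\Omega), c_j \Omega \rangle$, where $S_{ij}$ is the composition of the (now bounded) operator $\Psi^{a_j^*, a_i}$ on $L^2(\Gamma_q)$ with right multiplication by $c_i$. Since $x \mapsto x\Omega$ is $\sigma$-weakly-to-weakly continuous from $\Gamma_q(\RH)$ to $L^2(\Gamma_q)$ and bounded operators are weakly continuous, each summand defines a normal linear functional on $\Gamma_q(\RH)$. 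Polarization then gives normality of every vector state on algebraic $\zeta$, and closedness of normal functionals under norm convergence in $\Gamma_q(\RH)_*$ extends this to all $\zeta \in L^2(\Gamma_q(\RH))_\nabla$.

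The main obstacle is the boundedness step at the boundary $|q| = \dim(\RH)^{-1/2}$: the Schatten-$p$ machinery of Section \ref{Sect=Orhnstein-Uhlenbeck} breaks down there, but the pointwise $|q|^m$-decay of each summand can still be turned into an operator-norm bound on $F_q(H)$ precisely because the shift summands have mutually orthogonal images across distinct Fock levels, so the supremum of their norms on individual levels becomes their norm on the whole of $F_q(H)$.
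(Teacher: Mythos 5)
Your proof is correct, but it takes a genuinely different route from the paper. The paper never uses boundedness of $\Psi^{a,b}$ on $L_2$: instead it exploits the length filtration directly. Since $\Delta$ preserves the length of Wick words, Lemma \ref{Lem=WickProduct} shows that for fixed elementary Wick words $a,b,c,d$ the quantity $\langle x \cdot (a \notimes b\Omega), c \notimes d\Omega\rangle$ vanishes as soon as the length of $x$ exceeds the sum of the lengths of $a,b,c,d$; hence each such matrix coefficient factors through the finite-rank normal truncation $P_{\leq N}$ onto words of bounded length, and is therefore automatically normal, after which a Kaplansky approximation on the unit ball finishes the argument. This is a purely algebraic ``finite propagation'' argument, valid for every $-1<q<1$ with no norm estimates. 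You instead upgrade the level-wise bound $\|\Psi^{a,b}|_{H_q^{\otimes m}}\| \leqslant C|q|^m$ of Proposition \ref{Prop=Normestimate} to genuine boundedness of $\Psi^{a,b}$ on $L_2(\Gamma_q)$, using that each $(j,r,s)$-summand of \eqref{Eqn=Tripleproduct} is a fixed graded shift of Fock levels, so that mutually orthogonal domains and images make its norm the supremum of the level-wise norms; matrix coefficients at algebraic vectors then become honest $L_2$-vector functionals $x \mapsto \langle x\Omega, \eta\rangle$, hence normal, and the same density-plus-extension argument concludes. Your route costs a little more analysis (one must note that the constants in Proposition \ref{Prop=Normestimate} are uniform in $m$, and that the passage from normality of all vector states to a normal extension of the representation is the standard cyclic-decomposition/GNS argument, which you invoke rather than spell out), but it buys a stronger intermediate fact of independent interest: $\Psi^{a,b}$ is a bounded operator on $L_2(\Gamma_q)$ for \emph{all} $|q|<1$, not merely Schatten class in the small-$|q|$ regime, which is not recorded in the paper. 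Your treatment of the right action and of commutativity (agreement of separately normal maps on the $\sigma$-weakly dense $\cA_q$) matches the standard reasoning and is fine.
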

\begin{proof}
The  proof  follows  \cite[Proposition 3.8]{CaspersGradient}.
We prove it for the left action, the proof for the right action is similar. We must show that the action is weakly continuous on the unit ball. Take elementary Wick operators $a = W_q(\bm{\xi}), b = W_q(\bm{\eta}), c = W_q(\bm{\mu}), d = W_q(\bm{\nu}) \in \cA_q$. For $\bm{\xi} = \xi_1 \otimes \ldots \otimes \xi_n$ we refer to $n$ as the length of $W_q(\bm{\xi})$. Consider,
\begin{equation}\label{Eqn=Inn}
\langle x \cdot a \notimes b\Omega, c \notimes d\Omega \rangle =
- \frac{1}{2} \langle d^\ast (\Delta(c^\ast x a) + c^\ast \Delta(x) a - \Delta(c^\ast x) a  - c^\ast \Delta(xa)) b\Omega,  \Omega \rangle
\end{equation}
Let $x,a,b,c,d$ have length $X, A, B, C, D$ respectively.
The operator $\Delta$ preserves the length of a Wick word. Then using  Lemma \ref{Lem=WickProduct} we see that each of the expressions $d^\ast \Delta(c^\ast x a) b$, $d^\ast c^\ast \Delta( x a) b$,  $d^\ast \Delta( c^\ast x  ) a b$ and   $d^\ast c^\ast \Delta( x )a b$ is a linear combination of Wick words of length at least $X - A - B - C - D$. Therefore if $X > A + B + C+D$ then \eqref{Eqn=Inn} is 0. We therefore see that the map
\begin{equation}\label{Eqn=Inn2}
\cA_q \rightarrow \mathbb{C}: x \mapsto \langle x \cdot a \notimes b\Omega, c \notimes d\Omega \rangle,
\end{equation}
factors through the normal projection $P_{\leq X}: \Gamma_q(\RH) \rightarrow \Gamma_q(\RH)$ onto Wick words of length $\leq X$. The range of this projection is finite dimensional and contained in $\cA_q$. Hence \eqref{Eqn=Inn2} extends to a normal map $\Gamma_q(\RH) \rightarrow \mathbb{C}$.

Now let $x \in  \Gamma_q(\RH)$ with $\Vert x \Vert \leq 1$ and let $x_j \in \Gamma_q(\RH)$ with $\Vert x_j \Vert \leq 1$ be a  net converging to $x$ weakly (Kaplansky's density theorem). Let $\xi, \eta \in  L^2(\Gamma_q(\RH))_{\nabla}$  and let $\xi_0, \eta_0$ be in the linear span of vectors of the form $a \otimes b \Omega$ with $a, b \in \cA_q$ as in the previous paragraph with $\Vert \xi - \xi_0 \Vert, \Vert \eta - \eta_0 \Vert < \varepsilon$.  Let $j_0$ be such that for $j > j_0$ we have $\vert \langle (x - x_j) \xi_0, \eta_0 \rangle \vert < \varepsilon$. Then,
\[
\begin{split}
\vert \langle (x - x_j) \xi, \eta \rangle \vert \leq & \vert \langle (x - x_j) \xi_0, \eta_0 \rangle \vert  + \Vert x - x_j \Vert \Vert \xi - \xi_0 \Vert \Vert \eta_0 \Vert + \Vert x - x_j \Vert \Vert \xi  \Vert \Vert \eta - \eta_0 \Vert\\
 \leq & \varepsilon + 2 \varepsilon       \Vert \eta_0 \Vert + 2 \varepsilon    \Vert \xi  \Vert.
\end{split}
\]
\end{proof}

 \section{Derivations and the Akemann-Ostrand property}\label{Sect=AO}

  In this section we show that quantum Markov semi-groups may be used to prove the Akemann-Ostrand property AO$^+$ of a von Neumann algebra. In particular we find a new range of $q$ for which $q$-Gaussians have AO$^+$. Let us first recall the definition of the Akemann-Ostrand property that is most suitable for us, see \cite{IsonoTransactions}.

\begin{dfn}
 	A finite von Neumann algebra $\cM$  has condition AO$^+$ if there exists a $\sigma$-weakly dense unital C$^\ast$-subalgebra $A \subseteq \cM$ such that:
 	\begin{enumerate}
 		\item $A$ is locally reflexive \cite[Section 9]{NateTaka};
 		\item There exists a ucp map $\theta: A \minotimes A^{\op} \rightarrow B(L_2(\cM))$ such that $\theta( a \otimes b^{\op }) - a b^{\op}$ is compact for all $a,b\in A$.
 	\end{enumerate}
\end{dfn}

Let $M$ be a finite von Neumann algebra with faithful normal trace $\tau$. Let $\cA$ be a $\ast$-subalgebra that is $\sigma$-weakly dense in $M$ with C$^\ast$-closure $A$. Let $\partial: \cA \rightarrow H$ be a closable derivation to an $\cA$-$\cA$ bimodule $H$. Assume that $\cA$ has some linear basis $e_i, i \in I$ with $I$ countable that is moreover orthonormal in $L_2(M)$. Define the linear map,
\begin{equation}\label{Eqn=S}
S: \cA \rightarrow H: e_i \mapsto  \frac{\partial(e_i)}{\Vert \partial(e_i) \Vert},
\end{equation}
  where we put any unit vector in $H$ as $S(e_i)$ if $\partial(e_i)=0$.  Then the map $S$ is not necessarily bounded on $L_2(M)$ and therefore we state the following assumption.

\vspace{0.3cm}

\noindent {\bf Assumption 2.} Assume that $S$ is a bounded map $L_2(M) \rightarrow H$. Further, we assume that $S^*S$ is a Fredholm operator on $L^2(M)$.

\vspace{0.3cm}

Next, for every $x,y \in \cA$ we introduce a bounded linear map
\begin{equation}\label{Eqn=Txy}
T_{x,y}: L_2(M) \rightarrow H: \xi \mapsto x S(\xi) y - S(x \xi y).
\end{equation}
Let $S = V \vert S \vert$ be the polar decomposition of $S$ and
 note that $1-V^*V$ is a finite rank projection.
Denote by $K(L_2(M))$ the {\bf compact operators}.

\begin{prop}\label{Prop=Action}
Assume that $H$ is weakly contained in the coarse bimodule of $M$ as $\cA$-$\cA$ bimodules (namely, the assignment $x\otimes y^{\rm op} \in B(L_2(M)\otimes L_2(M))\mapsto xy^{\rm op}\in B(H)$ for $x,y\in \cA$ extends to a bounded $\ast$-homomorphism).
Suppose that Assumption 2 holds and that $T_{x,y}$ is compact for all $x,y \in \cA$.  Then there is a ucp map
\[
\theta: A \minotimes A^{\op} \rightarrow B(L_2(M)),
\]
such that $\theta (a \otimes b^{\op})  - a b^{\op}$ is compact for all $a,b \in A$.
\end{prop}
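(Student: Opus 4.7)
My plan is to compress the $\ast$-homomorphism $\pi_H\colon A\minotimes A^{\op}\to B(H)$ provided by weak containment through the polar isometry of $S$, correcting by a scalar state on $\ker S$ to guarantee unitality. First I would rewrite the hypothesis in operator form: writing $L_x R_y$ for the operator $\xi\mapsto x\xi y$ on $L_2(M)$, the compactness of $T_{x,y}$ is exactly
\begin{equation*}
\pi_H(x\otimes y^{\op})S - S\, L_x R_y\in K(L_2(M), H),\qquad x,y\in\cA.
\end{equation*}
Taking the adjoint of $T_{x^*,y^*}$ and substituting $x,y$ for $x^*,y^*$ yields, likewise, $S^*\pi_H(x\otimes y^{\op})-L_x R_y\, S^* \in K(H, L_2(M))$. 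Since $T_{x,y}$ is bilinear with $\|T_{x,y}\|\leqslant C\|x\|\|y\|$, both relations extend by norm continuity to $x,y\in A$.

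Next I would deduce a commutator estimate. Multiplying the first relation on the left by $S^*$, the second on the right by $S$ and subtracting gives $[S^*S, L_a R_b]\in K(L_2(M))$ for all $a,b\in A$. Approximating $\sqrt{\cdot}$ uniformly by polynomials on $\sigma(S^*S)\subset[0,\|S\|^2]$ and using that the compacts form a norm-closed two-sided ideal upgrades this to $[|S|, L_a R_b]\in K(L_2(M))$. Now let $S=V|S|$ be the polar decomposition; the Fredholm hypothesis on $S^*S$ provides a bounded $G\in B(L_2(M))$ with $G|S|=|S|G=I-E$, where $E:=1-V^*V$ is the finite-rank projection onto $\ker S$. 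Starting from $\pi_H(a\otimes b^{\op})V|S|- V|S|L_a R_b\in K$ and multiplying on the right by $G$, the identity $V|S|G=V(I-E)=V$ (since $VE=0$) combined with the rewrite
\begin{equation*}
V|S|L_a R_b G = V L_a R_b + V[|S|,L_a R_b]G - VL_a R_b E
\end{equation*}
shows $\pi_H(a\otimes b^{\op})V - VL_a R_b\in K(L_2(M), H)$. Compressing by $V^*$ and absorbing $V^*V L_a R_b \equiv L_a R_b \pmod{K}$ (the error $EL_a R_b$ being finite rank) yields
\begin{equation*}
V^*\pi_H(a\otimes b^{\op})V \equiv ab^{\op} \pmod{K(L_2(M))}.
\end{equation*}

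Finally, to obtain a ucp extension I would pick any state $\phi$ on $A\minotimes A^{\op}$ (for instance $\tau\otimes\tau$) and define
\begin{equation*}
\theta(z) := V^*\pi_H(z)V + \phi(z)\, E,\qquad z\in A\minotimes A^{\op}.
\end{equation*}
Unitality is immediate from $\theta(1)=V^*V+E=I$, and complete positivity follows by writing $\theta$ as the sum of the compression $V^*\pi_H(\cdot)V$ of a $\ast$-homomorphism and the cp map $z\mapsto \phi(z)E = E(\phi(z)\,I)E$. The second summand is finite rank, hence compact, so together with the previous step $\theta(a\otimes b^{\op})-ab^{\op}\in K(L_2(M))$ for all $a,b\in A$, as required. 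The main obstacle will be the middle step: propagating compactness first from $S^*S$ to $|S|$ via continuous functional calculus and then absorbing $|S|$ and its Fredholm pseudo-inverse $G$ while carefully tracking which error terms are finite rank versus merely compact — this is also where the Fredholm hypothesis is essentially used.
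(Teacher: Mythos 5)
Your argument is correct and follows essentially the same route as the paper's proof: extract the intertwining relations modulo compacts from $T_{x,y}$ and its adjoint, pass from $S^\ast S$ to $\vert S\vert$ by functional calculus, use the Fredholm hypothesis to invert $\vert S\vert$ modulo the compacts (the paper does this in the Calkin algebra rather than via an explicit parametrix $G$), conclude $V^\ast \pi_H(a\otimes b^{\op})V \equiv ab^{\op}$ modulo $K(L_2(M))$, and finish with the very same ucp map $V^\ast\pi_H(\cdot)V + \phi(\cdot)(1-V^\ast V)$. The differences are purely presentational, so no further comparison is needed.
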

\begin{proof}
	Observe that one can write $T_{x,y} = xy^{\rm op} S - S xy^{\rm op}$ for all $x,y\in \cA$, where $x$ and $y^{\rm op}$ mean the corresponding left and right actions. Let $\Pi$ be the quotient map on $B(L_2(M))$ onto the Calkin algebra. Then by the assumption, for $x,y\in \cA$,
\begin{align*}
	0=\Pi(S^*T_{x,y})
	=\ &\Pi(S^*xy^{\rm op}S)  - \Pi(S^*S x y^{\rm op})\\
	=\ &\Pi(|S|)\Pi(V^*xy^{\rm op}V)\Pi(|S|)  - \Pi(|S|^2)\Pi(xy^{\rm op}).
\end{align*}
It holds that
\begin{equation}\label{Eqn=AO}
	\Pi(|S|^2)\Pi( xy^{\rm op})= \Pi(|S|)\Pi(V^*xy^{\rm op}V)\Pi(|S|).
\end{equation}
By taking the adjoint of this equation and by exchanging $x,y$ by $x^*,y^*$, we get that
	$$\Pi(|S|^2)\Pi(xy^{\rm op}) = \Pi(xy^{\rm op})\Pi(|S|^2), \quad \text{for all } x,y\in \cA,$$
so that $\Pi(|S|)$ commutes with $\Pi(xy^{\rm op})$  for all $x,y\in \cA$.
Since $\Pi(|S|)$ is invertible by assumption, by applying $\Pi(|S|)^{-1}$ to \eqref{Eqn=AO} from left and right, we get
	$$\Pi( xy^{\rm op})= \Pi(V^*xy^{\rm op}V) , \quad \text{for all }x,y\in \cA.$$
We conclude that $V^*xy^{\rm op}V-xy^{\rm op}$ is compact for all $x,y\in \cA$.

	Finally by the weak containment assumption, there is a bounded $\ast$-homomorphism
	$$\varphi\colon \cA\otimes \cA^{\rm op} \to \mathrm{C}^*\{xy^{\rm op} \in B(H)\mid x,y\in \cA\}; \ \varphi(x\otimes y^{\rm op})=xy^{\rm op}.$$
Then  if $P:=V^*V=1$,
the composition $\Ad(V^*)\circ \varphi$ works as a desired ucp map $\theta$.
 For the general case, $\Ad(V^*)\circ \varphi$ is a ucp map into $P B(L_2(M))P$. Since $1-P$ is compact, by using a fixed state $\phi$ on $A\otimes_{\rm min}A^{\rm op}$, we can consider $\phi(\cdot)(1-P) + \Ad(V^*)\circ \varphi$ as a desired ucp map.
\end{proof}

We now investigate sufficient conditions such that the assumptions of Proposition \ref{Prop=Action} are satisfied.

\subsection{Group algebras}
We first consider group von Neumann algebras. This case was (essentially) discussed in \cite[Section 15]{NateTaka}, but we include all proofs for reader's convenience.

	Let $\Gamma$ be a discrete group. Let  $\pi\colon \Gamma \to \mathcal{U}(H)$ be a unitary representation and $b\colon \Gamma \to H$  a 1-cocycle for $\pi$ in the sense that $b(xy)=b(x)+\pi_xb(y)$ for all $x,y\in \Gamma$. Consider left and right actions of $\Gamma$ on $H\otimes \ell^2(\Gamma)$ given by $\pi_x \otimes \lambda_x$ and $1\otimes \rho_y$ for all $x,y\in \Gamma$ respectively.
Then using the canonical basis $(\delta_x)_{x\in \Gamma}$ for $\ell^2(\Gamma)$, one can construct a closable derivation by
	$$ \partial\colon \C[\Gamma] \to H\otimes \ell^2(\Gamma); \ \partial(x)=b(x)\otimes e_x, \quad x\in \Gamma.$$
  As above, put $\cA:=\C[\Gamma]$ and define  $S\colon \cA \to H\otimes \ell^2(\Gamma)$ by $S e_x := \partial(x)\|\partial(x)\|^{-1}$ (and put $Se_x =\xi\otimes e_x$ for any fixed unit vector $\xi \in H$ if $\partial(x)=0$).
Recall that $b$ is \textbf{proper} if $\|b(g)\|\to0$ whenever $g\to \infty$.
In this setting, we prove the following.

 \begin{prop}
 	Suppose that $b$ is proper and that $\pi$ is weakly contained in the left regular representation. Then $\partial$ and $S$ satisfy all the assumptions in Proposition \ref{Prop=Action}.
 \end{prop}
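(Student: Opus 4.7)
The plan is to verify the three hypotheses of Proposition \ref{Prop=Action} in turn: (i) weak containment of $H \otimes \ell^2(\Gamma)$ in the coarse bimodule, (ii) Assumption 2, (iii) compactness of $T_{x,y}$. The bulk of the work will be (iii); the first two are essentially immediate.

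For (i), I would argue as follows. If $\pi = \lambda$, then the Fell unitary $W\colon \ell^2(\Gamma) \otimes \ell^2(\Gamma) \to \ell^2(\Gamma)\otimes \ell^2(\Gamma)$, $W(e_g\otimes e_h) = e_g \otimes e_{g^{-1}h}$, intertwines the left action $\lambda\otimes \lambda$ with $\lambda\otimes 1$ and commutes with $1\otimes \rho$, so the bimodule is literally isomorphic to the coarse bimodule. For general $\pi\prec\lambda$, a vector of the form $\eta\otimes v\in H\otimes\ell^2(\Gamma)$ has bimodule matrix coefficient
\[
\langle (\pi_g\otimes \lambda_g)(1\otimes \rho_{h^{-1}})(\eta\otimes v),\eta\otimes v\rangle = \langle \pi_g \eta,\eta\rangle\,\langle \lambda_g\rho_{h^{-1}}v, v\rangle.
\]
Weak containment $\pi\prec\lambda$ lets us approximate the first factor by finite sums $\sum_j \langle \lambda_g \xi_j,\xi_j\rangle$ uniformly on finite subsets of $\Gamma$, which produces an approximation of the original matrix coefficient by matrix coefficients of $(\lambda\otimes\lambda, 1\otimes\rho)$; by the previous paragraph this is the coarse bimodule.

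For (ii), the description $Se_x = (b(x)/\|b(x)\|)\otimes e_x$ (or an arbitrary unit vector tensored with $e_x$ on the finite set $\{x: b(x)=0\}$, which exists because $b$ is proper) yields pairwise orthogonal unit vectors, so $S$ is an isometry; in particular $S$ is bounded and $S^{\ast}S = \Id$ is trivially Fredholm.

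For (iii), by bilinearity of $(x,y)\mapsto T_{x,y}$ it suffices to take $x=g$, $y=h$ in $\Gamma$. A direct computation using the cocycle identity $b(gkh)=b(g)+\pi_g b(k)+\pi_{gk}b(h)$ gives
\[
T_{g,h}(e_k) = (\pi_g \xi_k - \xi_{gkh})\otimes e_{gkh},
\]
where $\xi_k:=b(k)/\|b(k)\|$ when $b(k)\neq 0$. Since the $e_{gkh}$ are pairwise orthogonal as $k$ varies, it is enough to show $\|\pi_g\xi_k-\xi_{gkh}\|\to 0$ as $k\to\infty$. Properness of $b$ gives $\|b(k)\|\to\infty$, hence $\|b(gkh)\|\to\infty$ with $\|b(gkh)\|/\|b(k)\|\to 1$ (the error is bounded by $\|b(g)\|+\|b(h)\|$), and therefore
\[
\xi_{gkh} = \frac{b(g)+\pi_g b(k)+\pi_{gk}b(h)}{\|b(gkh)\|} \longrightarrow \pi_g \xi_k
\]
in norm. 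Consequently $T_{g,h}$ sends the orthonormal basis $(e_k)$ into an orthogonal family whose norms tend to zero, so it is a norm limit of finite rank operators, hence compact.

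The main obstacle is step (iii): one has to identify $T_{g,h}$ explicitly on the basis and extract compactness from properness. The first two steps are mostly bookkeeping once the correct picture (Fell unitary / orthogonality of the $e_x$ on the right-hand tensor factor) is in place.
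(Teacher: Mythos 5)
Your proposal is correct and follows essentially the same route as the paper: $S$ is an isometry (so Assumption 2 is trivial), weak containment comes from Fell absorption ($\pi\prec\lambda$ forces $\pi\otimes\lambda\prec\lambda\otimes\lambda\cong\lambda\otimes 1$, which you phrase through coefficients of simple tensors while the paper phrases it as weak containment of $\Gamma\times\Gamma$-representations), and compactness of $T_{g,h}$ follows from the cocycle identity giving $\Vert T_{g,h}(e_k)\Vert = O(\Vert b(k)\Vert^{-1})\to 0$ together with the fact that $T_{g,h}(e_k)\in H\otimes\mathbb{C}e_{gkh}$, which is exactly the content of the paper's Lemma \ref{Lem=OrderEstimate} and the $c_0(\Gamma)$ argument. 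Your explicit formula $T_{g,h}(e_k)=(\pi_g\xi_k-\xi_{gkh})\otimes e_{gkh}$ is just an unpacked version of the paper's estimate, so the two proofs coincide in substance.
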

 \begin{proof}
	Since $S$ is an isometry, Assumption 2 is trivially satisfied.
Since $\pi \prec \lambda$, it holds that the representation $\Gamma \times \Gamma \ni (x,y )\mapsto \pi_x\otimes \lambda_x\rho_y$ is weakly contained in the one of $\Gamma \times \Gamma \ni (x,y )\mapsto \lambda_x\otimes \lambda_x\rho_y$, which is in turn unitarily equivalent to $\Gamma \times \Gamma \ni (x,y )\mapsto \pi_x\otimes \rho_y$ via the unitary $e_x\otimes e_y \mapsto e_x \otimes e_{xy}$ on $\ell^2(\Gamma) \otimes \ell^2(\Gamma)$. We conclude that $H\otimes \ell^2(\Gamma)$ is weakly contained in the coarse bimodule as $\cA$-$\cA$ bimodules.

	It remains to show that $T_{x,y}$ is compact for all $x,y\in \mathcal A$. For this we can assume $x,y\in \Gamma$. Using Lemma \ref{Lem=OrderEstimate} below and using the properness of $b$, one has $\|T_{x,y} (e_g)\| \to 0$ as $g \to \infty$.
Observe that $T_{x,y} (e_g) \in  H\otimes \C e_{xgy}$, hence $T_{x,y}^*T_{x,y} \in \ell^\infty(\Gamma)$, so $T_{x,y}^* T_{x,y} \in c_0(\Gamma)$. We conclude that $T_{x,y}$ is compact.
 \end{proof}

\begin{lem}\label{Lem=OrderEstimate}
	For every $\gamma_1, \gamma_2 \in \Lambda$, there is a constant $C_{\gamma_1,\gamma_2}>0$ such that
	\[
	\Vert T_{\gamma_1, \gamma_2}(\mu) \Vert \leqslant C_{\gamma_1, \gamma_2} \Vert \partial(\mu) \Vert^{-1}.
	\]
\end{lem}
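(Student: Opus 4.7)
Fix $\gamma_1,\gamma_2,\mu\in\Lambda$ with $\partial(\mu)\neq 0$ (otherwise the inequality is vacuous). The plan is to compute $T_{\gamma_1,\gamma_2}(e_\mu)$ explicitly using the cocycle structure and then reduce the estimate to an elementary inequality in the Hilbert space $H$. From $\partial(g)=b(g)\otimes e_g$ and the bimodule rule $\gamma_1\cdot(\eta\otimes e_g)\cdot\gamma_2=\pi_{\gamma_1}\eta\otimes e_{\gamma_1 g\gamma_2}$, one obtains
\[
T_{\gamma_1,\gamma_2}(e_\mu)=\Bigl(\frac{\pi_{\gamma_1}b(\mu)}{\|b(\mu)\|}-\frac{b(\gamma_1\mu\gamma_2)}{\|b(\gamma_1\mu\gamma_2)\|}\Bigr)\otimes e_{\gamma_1\mu\gamma_2}
\]
(with the second summand replaced by a fixed unit vector of $H$ if $b(\gamma_1\mu\gamma_2)=0$). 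Two applications of the cocycle identity $b(xy)=b(x)+\pi_xb(y)$ give $b(\gamma_1\mu\gamma_2)=v+w$, where $v:=\pi_{\gamma_1}b(\mu)$ and $w:=b(\gamma_1)+\pi_{\gamma_1\mu}b(\gamma_2)$. Unitarity of $\pi$ yields $\|v\|=\|\partial(\mu)\|$, and $\|w\|\leq c_0:=\|b(\gamma_1)\|+\|b(\gamma_2)\|$ depends only on $\gamma_1,\gamma_2$.

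The problem thus reduces to bounding $\|v/\|v\|-(v+w)/\|v+w\|\|$ by $O(\|v\|^{-1})$. Placing the difference over the common denominator $\|v\|\|v+w\|$ produces the numerator $v(\|v+w\|-\|v\|)-w\|v\|$, whose norm is at most $2\|v\|\|w\|$ by the reverse triangle inequality; hence
\[
\Bigl\|\tfrac{v}{\|v\|}-\tfrac{v+w}{\|v+w\|}\Bigr\|\leq\frac{2\|w\|}{\|v+w\|}\leq\frac{2c_0}{\|v+w\|}.
\]
When $\|v\|\geq 2c_0$ the reverse triangle inequality gives $\|v+w\|\geq\|v\|/2$, and therefore $\|T_{\gamma_1,\gamma_2}(e_\mu)\|\leq 4c_0/\|v\|$, which is the desired estimate.

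The only (minor) obstacle is the complementary regime $\|v\|<2c_0$, where $\|v+w\|$ may be small or even zero and the above bound breaks down. Here I will instead use the crude uniform bound $\|T_{\gamma_1,\gamma_2}\|\leq 2$, valid because $S$ is an isometry on $\ell^2(\Lambda)$ (its images lie in the pairwise orthogonal components $H\otimes\mathbb{C}e_g$) and the bimodule actions are implemented by unitaries. In this regime one has $\|T_{\gamma_1,\gamma_2}(e_\mu)\|\leq 2\leq 4c_0/\|v\|$ automatically, so combining the two cases yields the lemma with $C_{\gamma_1,\gamma_2}=4(\|b(\gamma_1)\|+\|b(\gamma_2)\|)$.
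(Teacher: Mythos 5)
Your proof is correct, and at its core it is the same computation as the paper's: expand $\partial(\gamma_1\mu\gamma_2)$ by the Leibniz/cocycle rule, put the two normalized vectors over a common denominator, and control the difference with the reverse triangle inequality. The differences are in the packaging and in the hypotheses actually used. You work concretely with the cocycle, writing $T_{\gamma_1,\gamma_2}(e_\mu)$ as $\bigl(v/\Vert v\Vert-(v+w)/\Vert v+w\Vert\bigr)\otimes e_{\gamma_1\mu\gamma_2}$ with $v=\pi_{\gamma_1}b(\mu)$, $\Vert w\Vert\leqslant c_0$, which reduces the lemma to the elementary estimate $\Vert v/\Vert v\Vert-(v+w)/\Vert v+w\Vert\Vert\leqslant 2\Vert w\Vert/\Vert v+w\Vert$; the paper instead splits $T_{\gamma_1,\gamma_2}(\mu)$ into a coefficient-difference term and the extra Leibniz terms and estimates each. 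More substantively, your explicit case split ($\Vert v\Vert\geqslant 2c_0$ versus $\Vert v\Vert<2c_0$, using the crude bound $\Vert T_{\gamma_1,\gamma_2}(e_\mu)\Vert\leqslant 2$ in the second regime) means you never divide by a possibly small or vanishing $\Vert\partial(\gamma_1\mu\gamma_2)\Vert$ and you never need properness of $b$, whereas the paper's proof invokes properness to turn its bound into an $O(\Vert\partial(\mu)\Vert^{-2})$ statement and is silent about the degenerate case $\partial(\gamma_1\mu\gamma_2)=0$; in this sense your argument is slightly more general and more watertight (properness is then only needed afterwards, to conclude that $\Vert T_{\gamma_1,\gamma_2}(e_g)\Vert\to 0$ as $g\to\infty$). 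The only cosmetic point is that your constant $4(\Vert b(\gamma_1)\Vert+\Vert b(\gamma_2)\Vert)$ could be $0$ when both cocycle values vanish, while the statement asks for $C_{\gamma_1,\gamma_2}>0$; since $T_{\gamma_1,\gamma_2}(e_\mu)=0$ in that situation (for $\partial(\mu)\neq 0$), replacing the constant by $\max\{4(\Vert b(\gamma_1)\Vert+\Vert b(\gamma_2)\Vert),1\}$ settles it.
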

\begin{proof}
We have,
	\begin{equation}\label{Eqn=TxyEstimate}
	\begin{split}
	   T_{\gamma_1, \gamma_2}(\mu) = & \Vert \partial(\mu) \Vert^{-1} \gamma_1 \partial(\mu) \gamma_2 -   \Vert \partial(\gamma_1 \mu \gamma_2) \Vert^{-1}  \partial(\gamma_1 \mu \gamma_2)  \\
	   		=	&  ( \Vert \partial(\mu) \Vert^{-1}  -
	   		\Vert \partial(\gamma_1 \mu \gamma_2) \Vert^{-1}) \gamma_1 \partial(\mu) \gamma_2
	   		-   	
	   	 	\Vert \partial(\gamma_1 \mu \gamma_2) \Vert^{-1} ( \gamma_1  \partial(\mu) \gamma_2 +  \gamma_1    \mu  \partial(\gamma_2) ). \\
	\end{split}
	\end{equation}
We have that,
\[
	\Vert \partial(\gamma_1 \mu \gamma_2) \Vert - 	\Vert \partial( \mu ) \Vert \leqslant 	\Vert   \partial( \gamma_2) \Vert + \Vert   \partial( \gamma_1) \Vert,
\]
so
\[
\Vert \partial(\mu) \Vert^{-1}  - \Vert \partial(\gamma_1 \mu \gamma_2) \Vert^{-1} \leqslant \frac{\Vert   \partial( \gamma_2) \Vert + \Vert   \partial( \gamma_1) \Vert }{	\Vert \partial(\gamma_1 \mu \gamma_2) \Vert  	\Vert \partial( \mu ) \Vert},
\]
converges to 0 with order $O(\Vert \partial(\mu) \Vert^{-2})$ as  $\partial$ is proper. We conclude that  $\Vert T_{\gamma_1, \gamma_2}(\mu) \Vert$ converges to 0  with order $O(\Vert \partial(\mu) \Vert^{-1})$.
\end{proof}

  \subsection{Von Neumann algebras with filtration}
   We fix again $(M, \tau)$ a finite von Neumann algebra with $(\Phi_t)_{t \geq 0} = (\exp(-t \Delta))_{t \geq 0}$ a quantum Markov semi-group. Let
   \[
   \partial: \cA \rightarrow L_2(M)_\nabla: a \mapsto a \otimes \Omega_\tau
   \]
   be the derivation of Corollary \ref{Cor=Derivation}. We shall assume that $\Delta$ satisfies certain properties that are close to being a length function.

 	 \begin{dfn}
 	 	We say that $\Delta$ is {\bf filtered} if it has a compact resolvent and for every eigenvalue $\lambda$ of $\Delta$ there exists a (necessarily finite dimensional) subspace $\cA(\lambda) \subseteq \cA$ such that $\cA(\lambda)\Omega_\tau$ equals the eigenspace of $\Delta$ at eigenvalue $\lambda \geq 0$. Moreover, let $\lambda_n, n \in \mathbb{N}$ be an increasing enumeration of the eigenvalues of $\Delta$, and we assume the spaces   $\cA(\lambda_n), n \in \mathbb{N}$, to be filtered in the sense that
 	 	\begin{equation}\label{Eqn=Filter}
 	 	\bigoplus_{k=0}^{\infty} \cA(\lambda_{k})=\cA \quad \text{and} \quad \cA(\lambda_n) \cA(\lambda_m) \subseteq  \bigoplus_{k=0}^{m+n} \cA(\lambda_{k}) \quad \text{for all }n,m\in \N,
 	 	\end{equation}
where $\bigoplus$ means the algebraic direct sum.
	 \end{dfn}

 	 Suppose now that $\Delta$  has a compact resolvent with a complete set of eigenvectors $(e_i)_i$   in  $\mathcal A$ which is an orthonormal basis in $L^2(M)$   (for example if $\Delta$ is filtered this holds). Then since it is also a linear basis for $\cA$, we can define the map $S$ by $(e_i)_i$ and $\partial$ as in \eqref{Eqn=S}.
Observe that if $e_i \in \cA(\lambda_{n})$, then $\|\partial(e_i)\|^2 = \langle \Delta(e_i),e_i\rangle = \lambda_{n}$. Using this, it is easy to see that
\begin{equation}\label{Eqn=S2}
	S(a) = {\lambda_n^{-\frac{1}{2}}}{\partial(a)}, \quad \text{for all }a\in \cA(\lambda_n) \text{ and all eigenvalues }\lambda_n.
\end{equation}
In particular, the map $S$ does not depend on the choice of $(e_i)_i$.

 	 \begin{lem}\label{Lem=AO}
 	 Suppose that $\Delta$ has a compact resolvent with a complete set of eigenvectors in $\mathcal{A}$. Then the map $S$ in \eqref{Eqn=S2} is an isometry and hence satisfies Assumption 2.
 	 \end{lem}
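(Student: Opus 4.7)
My proof plan for Lemma~\ref{Lem=AO} is to compute the inner products $\langle S(e_i), S(e_j) \rangle$ directly from the definition of the gradient tensor product and the eigenstructure of $\Delta$. The key identity I would establish first is that for all $a, b \in \cA$,
\[
\langle \partial(a), \partial(b) \rangle_{L_2(M)_\nabla} = \langle \Delta(a)\Omega_\tau, b\Omega_\tau\rangle_{L_2(M)}.
\]
Indeed, by the definition of the inner product on $L_2(M)_\nabla$ the left-hand side equals $\tau(\Gamma(a,b))$, and unfolding $\Gamma(a,b) = \frac{1}{2}(\Delta(b)^*a + b^*\Delta(a) - \Delta(b^*a))$ we note that $\tau(\Delta(b^*a)) = 0$, since $\Delta(\Omega_\tau)=0$ (which follows from unitality $\Phi_t(1) = 1$) and $\Delta$ is self-adjoint on $L_2(M)$. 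The same self-adjointness, together with the identity $\tau(x^*y) = \langle y\Omega_\tau, x\Omega_\tau\rangle$, shows that the two remaining summands $\tau(\Delta(b)^*a)$ and $\tau(b^*\Delta(a))$ both equal $\langle \Delta(a)\Omega_\tau, b\Omega_\tau\rangle$, yielding the displayed identity.

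Specializing to eigenvectors $e_i \in \cA(\lambda_n)$ and $e_j \in \cA(\lambda_m)$ from the given orthonormal basis, the identity reads $\langle \partial(e_i), \partial(e_j)\rangle = \lambda_n \langle e_i, e_j\rangle_{L_2(M)} = \lambda_n \delta_{ij}$. In particular this recovers $\|\partial(e_i)\|^2 = \lambda_n$, justifying formula \eqref{Eqn=S2}, and whenever $\lambda_n, \lambda_m > 0$ we obtain $\langle S(e_i), S(e_j) \rangle = (\lambda_n \lambda_m)^{-1/2}\lambda_n\delta_{ij} = \delta_{ij}$. Combined with $\|S(e_i)\| = 1$ by construction, the family $\{S(e_i) : \partial(e_i) \neq 0\}$ is therefore orthonormal in $H$.

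Finally I would handle the eigenvectors in $\ker \Delta$. Since $\Delta$ has compact resolvent, $\ker\Delta$ is finite-dimensional, and for basis vectors $e_i \in \ker\Delta$ we have $\partial(e_i) = 0$, so the definition of $S$ leaves the choice of $S(e_i)$ free among unit vectors. I would pick these finitely many vectors orthonormal to one another and orthogonal to the previously constructed orthonormal family, which is possible inside the infinite-dimensional $H = L_2(M)_\nabla$. With this choice $\{S(e_i)\}_i$ is an orthonormal family indexed by an orthonormal basis of $L_2(M)$, so $S$ extends by linearity and continuity to an isometry $L_2(M) \to H$; in particular $S^*S = \Id$ is trivially Fredholm, which gives Assumption~2. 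The only nontrivial step is the inner product computation in the first paragraph; thereafter the argument is essentially bookkeeping on eigenspaces and a harmless choice on the finite-dimensional kernel.
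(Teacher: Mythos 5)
Your proof is correct and follows essentially the same route as the paper: the paper's own argument is precisely the computation $\langle S(e_i),S(e_j)\rangle=\Vert\partial(e_i)\Vert^{-1}\Vert\partial(e_j)\Vert^{-1}\langle\Delta^{\frac12}e_i,\Delta^{\frac12}e_j\rangle=\delta_{ij}$, and your identity $\langle\partial(a),\partial(b)\rangle=\tau(\Gamma(a,b))=\langle\Delta(a)\Omega_\tau,b\Omega_\tau\rangle$ is exactly the justification of that step, obtained by the same unfolding of $\Gamma$ together with $\Delta\Omega_\tau=0$ and self-adjointness. The only point to flag is your treatment of $\ker\Delta$: the claim that the finitely many vectors $S(e_i)$ with $\partial(e_i)=0$ can be chosen orthogonal to the closed span of the remaining orthonormal family is not automatic (that span could a priori exhaust $L_2(M)_\nabla$), but this is immaterial — since $\ker\Delta$ is finite dimensional, any choice of unit vectors makes $S^\ast S$ a finite-rank perturbation of the identity, hence bounded and Fredholm, so Assumption 2 (the content actually used later, and a point the paper's own proof also passes over silently) holds in any case.
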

  \begin{proof}
  	By definition, $S$ is given by eigenvectors $(e_i)_i$. Then we have,
  	\[
  	\begin{split}
  	\langle  S(e_i), S(e_j) \rangle  = &
  	\Vert \partial(e_i) \Vert^{-1} \Vert \partial(e_j) \Vert^{-1}   \langle \partial(e_i), \partial(e_j) \rangle =
  	\Vert \partial(e_i) \Vert^{-1} \Vert \partial(e_j) \Vert^{-1}   \langle \Delta^{\frac{1}{2}} e_i, \Delta^{\frac{1}{2}} e_j \rangle = \delta_{i,j}.
  	\end{split}
  	\] 	
  	So $S$ is an isometry.   	
  \end{proof}

  \begin{lem}\label{Lem=filtered}
  	Suppose that $\Delta$ is filtered, then in fact we have $\cA(\lambda_m)\cA(\lambda_n) \subset \bigoplus_{k= \vert m - n \vert}^{m+n} \cA(\lambda_k)$.
  \end{lem}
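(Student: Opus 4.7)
The plan is to leverage the symmetry of the trace together with the filtration bound already assumed, i.e., $\cA(\lambda_p)\cA(\lambda_q) \subseteq \bigoplus_{k=0}^{p+q} \cA(\lambda_k)$, so as to sharpen the lower index from $0$ to $|m-n|$. Without loss of generality, I assume $m \geqslant n$, so that $|m-n| = m-n$, and I pick arbitrary $a \in \cA(\lambda_m)$ and $b \in \cA(\lambda_n)$. Because the eigenspaces $\cA(\lambda_k)\Omega_\tau$ are mutually orthogonal in $L_2(M)$ (being eigenspaces of the self-adjoint operator $\Delta$ for distinct eigenvalues), and since $\cA = \bigoplus_{k} \cA(\lambda_k)$, it suffices to show that $\tau(c^* a b) = 0$ for every $c \in \cA(\lambda_k)$ with $k < m-n$.

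The key manipulation is to invoke the trace property and rewrite
\[
\tau(c^* a b) = \tau(b c^* a).
\]
For this to make sense inside the filtration I need $c^* \in \cA(\lambda_k)$. This follows because $\Phi_t$ is $\ast$-preserving (being u.c.p.), so its generator satisfies $\Delta(x^*) = \Delta(x)^*$ on $\cA$, hence each eigenspace is closed under the $\ast$-operation. Consequently, by the filtered assumption applied to $b \in \cA(\lambda_n)$ and $c^* \in \cA(\lambda_k)$,
\[
bc^* \in \bigoplus_{j=0}^{n+k} \cA(\lambda_j).
\]

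Now I finish by an orthogonality argument. Writing $bc^* = \sum_{j=0}^{n+k} y_j$ with $y_j \in \cA(\lambda_j)$,
\[
\tau(bc^* a) = \sum_{j=0}^{n+k} \tau(y_j a) = \sum_{j=0}^{n+k} \langle y_j \Omega_\tau, a^* \Omega_\tau \rangle.
\]
Since $a^* \in \cA(\lambda_m)$ (again using $\Delta(a^*)=\Delta(a)^*$), the vector $a^*\Omega_\tau$ lies in the $\lambda_m$-eigenspace, which is orthogonal to $\cA(\lambda_j)\Omega_\tau$ for every $j \neq m$. But the summation index $j$ runs over $\{0, 1, \ldots, n+k\}$, and by hypothesis $k < m-n$, so $n + k < m$, which means $m$ never appears in the range of summation. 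Every inner product in the sum therefore vanishes, giving $\tau(c^* a b) = 0$, as required.

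There is no real obstacle here; the argument is a clean application of symmetry. The only point that merits attention is the $\ast$-invariance of the eigenspaces $\cA(\lambda_k)$, which follows immediately from the $\ast$-preservation of $\Phi_t$ but is the one ingredient beyond the definition of filtered that makes the argument go through.
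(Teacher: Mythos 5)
Your proof is correct and follows essentially the same route as the paper: both use $\Delta(x^\ast)=\Delta(x)^\ast$ to get $\ast$-invariance of the eigenspaces, apply the assumed filtration bound to the product of the two lower-degree elements ($bc^\ast$ in your notation, $y^\ast z$ in the paper's), and conclude by orthogonality of the eigenspaces in $L_2(M)$; your use of traciality $\tau(c^\ast ab)=\tau(bc^\ast a)$ is just the paper's inner-product manipulation $\langle y^\ast z\Omega_\tau,x\Omega_\tau\rangle=\langle z\Omega_\tau,yx\Omega_\tau\rangle$ in different clothing. The only cosmetic gap is that your ``without loss of generality $m\geqslant n$'' should be justified by taking adjoints (using the $\ast$-invariance you already established), exactly as the paper's ``the same holds for $xy$''.
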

\begin{proof}
	As $\Delta(x^\ast) = \Delta(x)^\ast$ for $x \in \cA$, the spaces $\cA(\lambda)$ are self-adjoint. Now take $x \in \cA(\lambda_m)$, $y \in \cA(\lambda_n)$  and $z \in \cA(\lambda_k)$.  Assume that $m > n$  and that $k <  m - n$. By assumption we find that $y^\ast z \in \bigoplus_{l=0}^{n+k} \cA(\lambda_{l})$.  As $n+k < m$ this shows that $0 = \langle y^\ast z \Omega_\tau, x \Omega_\tau \rangle = \langle z \Omega_\tau, y x \Omega_\tau \rangle$. So $yx$ is orthogonal to $\cA(\lambda_k)$. The same holds for $xy$, which yields the lemma.
\end{proof}

  \begin{example}
  Let $\Lambda$ be a discrete finitely generated group and let $L: \Lambda \rightarrow \mathbb{N}$ be the length function given by the graph distance to the identity in the Cayley graph of $\Lambda$. Set $\Delta$ to be the closure of $\gamma \mapsto L(\gamma) \gamma$ as an unbounded operator on $\ell_2(\Lambda)$.   Suppose that $L$ is  conditionally positive definite so that
  \[
  Q(\xi) = \sum_{\gamma \in \Gamma}  L(\gamma)  \Vert \xi(\gamma ) \Vert^2 = \langle \Delta^{\frac{1}{2}} \xi, \Delta^{\frac{1}{2}} \xi\rangle
  \]
   is a quantum Dirichlet form (see \cite{CiprianiSauvageot}), i.e. the generator of a quantum Markov semi-group. Then $\Delta$ is easily seen to be filtered.
  \end{example}

\begin{rmk}\label{Rmk=FilteredExample}
	Let $\Gamma_q(\RH)$ be the $q$-Gaussian algebra for $-1 < q < 1$ with the Ornstein-Uhlenbeck semi-group $e^{-t \Delta}$, see Section \ref{Sect=Orhnstein-Uhlenbeck}. Then it follows from the Wick product formula of Lemma \ref{Lem=WickProduct} that $\Delta$ is filtered.
\end{rmk}

 	  \begin{dfn}
 	 We say that $\Delta$ has {\bf subexponential growth} if it has a complete set of eigenvalues $\lambda_0 < \lambda_1 < \lambda_2 < \ldots$ for which
 	 \begin{equation}\label{Eqn=Subexponential}
 	\lim_{n\rightarrow \infty} \frac{\lambda_{n+1}}{\lambda_n} = 1.
 	 \end{equation}
 	 \end{dfn}

  \begin{rmk}
  	Subexponential growth of a generator of a quantum Markov semi-group should be compared to the amenability results obtained in \cite{CiprianiSauvageot} and \cite[Appendix]{CaspersGradient}. These results show that often one cannot expect a growth on the eigenvalues that is more than linear and so in particular \eqref{Eqn=Subexponential} holds.
  	\end{rmk}

 \begin{thm}\label{Thm=Compact}
  	Let $\cM$ be a finite von Neumann algebra and let $(\Phi_t)_{t \geq 0} = (\exp(-t \Delta))_{t \geq 0}$ be a quantum Markov semi-group such that $\Delta$ is filtered with  subexponential growth. Let $S$ be as in \eqref{Eqn=S2}.
Then for every $x,y \in \cA $ the operator $T_{x,y}$ in \eqref{Eqn=Txy} is compact.   	
 \end{thm}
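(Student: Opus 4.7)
The plan is to first reduce the claim to showing that $T_{x,1}$ and $T_{1,y}$ are compact for all $x,y\in\cA$. A direct manipulation gives the identity
\[
T_{x,y}(\mu) \;=\; x\cdot T_{1,y}(\mu) \;+\; T_{x,1}(\mu y),
\]
and since both the left action of $x$ on $L_2(M)_\nabla$ and right multiplication by $y$ on $L_2(M)$ are bounded (Proposition \ref{Prop=Module}), compactness of $T_{x,1}$ and $T_{1,y}$ yields compactness of $T_{x,y}$. The two cases are symmetric, so I focus on $T_{x,1}$, and by linearity I assume $x\in\cA(\lambda_n)$ for some fixed $n$.

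The key calculation is the action of $T_{x,1}$ on an eigenvector $\mu\in\cA(\lambda_k)\Omega_\tau$. Using $S|_{\cA(\lambda_k)\Omega_\tau} = \lambda_k^{-1/2}\partial$, the derivation rule $\partial(x\mu) = x\,\partial(\mu) + \partial(x)\mu$, and the filtration property $x\mu\in\bigoplus_{l=-n}^{n}\cA(\lambda_{k+l})\Omega_\tau$ from Lemma \ref{Lem=filtered}, one obtains
\[
T_{x,1}(\mu) \;=\; \sum_{l=-n}^{n}\bigl(\lambda_k^{-1/2}-\lambda_{k+l}^{-1/2}\bigr)\,\partial\bigl(P_{k+l}(x\mu)\bigr) \;-\; \lambda_k^{-1/2}\,\partial(x)\mu,
\]
where $P_j$ denotes the projection of $L_2(M)$ onto the finite-dimensional eigenspace $\cA(\lambda_j)\Omega_\tau$ (understood as zero for $j<0$). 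This decomposes $T_{x,1}$ into $2n+2$ pieces, each of which I treat separately.

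The last piece $\mu\mapsto \lambda_k^{-1/2}\partial(x)\mu$ factors as the bounded right-multiplication $\xi\mapsto x\notimes\xi\colon L_2(M)\to L_2(M)_\nabla$ (with norm at most $\Vert\Gamma(x,x)\Vert^{1/2}$ via $\Vert x\notimes\xi\Vert_\nabla^2 = \langle\Gamma(x,x)\xi,\xi\rangle_{L_2}$) pre-composed with the diagonal operator $D$ acting as $\lambda_k^{-1/2}\mathrm{Id}$ on $P_kL_2(M)$; since $\lambda_k\to\infty$ (compact resolvent), $D$ is compact, hence so is the composition. For each $l\in\{-n,\ldots,n\}$, the summand
\[
E_l\colon \mu\in P_kL_2(M)\;\mapsto\;\bigl(\lambda_k^{-1/2}-\lambda_{k+l}^{-1/2}\bigr)\,\partial\bigl(P_{k+l}(x\mu)\bigr)
\]
is genuinely block-diagonal: its source decomposes orthogonally as $\bigoplus_k P_kL_2(M)$, and its image lies in $\bigoplus_k \partial(P_{k+l}L_2(M))\subset L_2(M)_\nabla$ whose summands are mutually orthogonal because $\langle\partial(a),\partial(b)\rangle_\nabla = \langle\Delta(a),b\rangle_{L_2}$ vanishes across distinct eigenspaces. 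Each block is finite-rank with norm at most $\Vert x\Vert\cdot\bigl|(\lambda_{k+l}/\lambda_k)^{1/2}-1\bigr|$, which tends to zero by subexponential growth (since $\lambda_{k+l}/\lambda_k = \prod_{i=0}^{|l|-1}\lambda_{k+i+1}/\lambda_{k+i}\to 1$). A block-diagonal operator with finite-rank blocks whose norms tend to zero is a norm-limit of its finite partial sums, hence compact.

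The main obstacle is to get the two hypotheses to complement each other. The compact resolvent alone gives $\lambda_k\to\infty$, which controls the remainder term but is useless for the band sum, where the $\partial$-contribution has norm $\sqrt{\lambda_{k+l}}$ that grows. Conversely, the subexponential ratio $\lambda_{k+l}/\lambda_k\to 1$ controls the band differences $|\lambda_k^{-1/2}-\lambda_{k+l}^{-1/2}|\sqrt{\lambda_{k+l}}$ but says nothing about the remainder. The filtered structure of $\Delta$ is precisely what isolates these two regimes inside the formula for $T_{x,1}(\mu)$, and the eigenspace-orthogonality of $\partial$ is what promotes each band piece from a merely banded operator to a genuine block-diagonal one, where compactness reduces to the block norms tending to zero.
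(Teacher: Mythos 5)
Your treatment of $T_{x,1}$ is correct, and its mechanism (split off the ``eigenvalue--difference'' band terms, use $\Vert\partial(P_{k+l}(x\mu))\Vert\leqslant\lambda_{k+l}^{1/2}\Vert x\Vert_\infty$ plus subexponential growth, and exploit orthogonality of $\partial(\cA(\lambda_j))$ across eigenspaces to upgrade blockwise decay to compactness) is essentially the same as the paper's, which proves $\Vert T_{x,y}P_{\lambda_n}\Vert\to0$ and then uses orthogonality of the ranges coming from Lemma \ref{Lem=filtered}. The reduction identity $T_{x,y}(\mu)=x\cdot T_{1,y}(\mu)+T_{x,1}(\mu y)$ is also fine. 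The gap is the sentence ``the two cases are symmetric''. They are not: the gradient bimodule $L_2(M)_\nabla$ treats the two sides very differently --- the right action \eqref{Eqn=RightAction} is the plain one while the left action \eqref{Eqn=LeftAction} is twisted, and the gradient leg of $\partial$ sits in the left tensor factor. This is exactly why your remainder term for $T_{x,1}$ is harmless: it is $\lambda_k^{-1/2}\,x\notimes\mu\Omega_\tau$, whose norm is controlled by $\Vert\Gamma(x,x)\Vert_\infty^{1/2}\Vert\mu\Omega_\tau\Vert_2$, i.e.\ by the $L_2$-norm of the \emph{varying} vector, so a compact diagonal suffices. The corresponding remainder for $T_{1,y}$ is $\lambda_k^{-1/2}\,\mu\,\partial(y)$ with $\mu$ an $L_2$-normalized eigenvector acting by the twisted left action on the \emph{fixed} vector $\partial(y)$; the only a priori bound is $\Vert\mu\Vert_\infty\Vert\partial(y)\Vert$, and $\Vert\mu\Vert_\infty$ is not controlled by $\lambda_k^{1/2}$. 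Showing $\lambda_k^{-1}\Vert\mu\,\partial(y)\Vert^2\to0$ is precisely the heart of the paper's proof: the expansion of $\Vert a_n\partial(y)\Vert^2$ in terms of gradient forms and the cancellations \eqref{Eqn=Lim1}, \eqref{Eqn=Lim2}, \eqref{Eqn=Lim3}, which use both the filtration and subexponential growth. Your proposal omits this entirely, so the hardest half of the theorem is missing.

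There is a legitimate way to make your symmetry work, but it is not free: one needs the reality of the Cipriani--Sauvageot derivation, i.e.\ an antiunitary $\mathcal{J}$ on $L_2(M)_\nabla$ with $\mathcal{J}(a\xi b)=b^*\mathcal{J}(\xi)a^*$ and $\mathcal{J}\partial(a)=\partial(a^*)$ (cf.\ \cite[Lemma 3.10]{CaspersGradient}). Since the eigenspaces $\cA(\lambda_n)$ are self-adjoint, this gives $\mathcal{J}S=SJ$ on $\cA\Omega_\tau$ ($J$ the canonical conjugation of $L_2(M)$), hence $\mathcal{J}T_{1,y}=T_{y^*,1}\circ J$, and compactness of $T_{1,y}$ would follow from your argument applied to $T_{y^*,1}$. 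If you go this route you must state and prove (or properly cite) the existence of $\mathcal{J}$; otherwise you must prove the estimate for $\lambda_k^{-1/2}\,\mu\,\partial(y)$ directly, as the paper does.
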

 \begin{proof}
 	Fix $x,y \in \bigoplus_{k = 0}^K \cA(\lambda_k)$ for some $K \in \mathbb{N}$. For each eigenvalue $\lambda$, let $P_{\lambda}$ be the orthogonal projection onto $\cA(\lambda) \Omega_\tau$ and we also regard it as a map $\cA \to \cA(\lambda)$.
Only in this proof, we will use the notation $\|a\|_\infty$ for the operator norm for $a\in \cA$.
Our first goal is to prove $\|T_{x,y}P_{\lambda_n}\| \to 0$ as $n \to \infty$. For this, we have only to prove that $\|T_{x,y} (a_n) \|  \to 0$ as $n\to \infty$, where $(a_n)_n$ is any sequence such that $a_n\in \cA(\lambda_n)$ and $\|a_n\Omega_\tau\|=1$ for all $n\in \N$.

	Take such a sequence $(a_n)_n$ and fix $n\in \N$. Then since $xa_n y\in \bigoplus_{k=-2K}^{2K} \cA(\lambda_{n + k})$  by Lemma \ref{Lem=filtered} (where $\cA(l)=0$ if $l<0$), one has $xa_ny = \sum_{k=-2K}^{2K}P_{\lambda_{n + k}}(xa_ny)$ (where $P_l=0$ if $l<0$), so that
	$$ S(xa_ny) = \sum_{k=-2K}^{2K}{\lambda_{n + k}^{-\frac{1}{2}}}\partial(P_{\lambda_{n + k}}(xa_ny)) .$$
Using this, we can write
	$$ T_{x,y}(a_n) =  x S(a_n) y - S(x a_n y) =  \lambda_{n}^{-\frac{1}{2}}{x \partial(a_n) y} -
 \sum_{k=-2K}^{2K}\lambda_{n + k}^{-\frac{1}{2}}\partial(P_{\lambda_{n + k}}(xa_ny) )$$
and further using $\partial(xa_ny) = \sum_{k=-2K}^{2K}\partial(P_{\lambda_{n + k}}(xa_ny))$, this translates into
 \begin{equation}\label{Eqn=VStep}
 T_{x,y}(a_n)
 =  \lambda_{n}^{-\frac{1}{2}} \left(  x \partial(a_n) y  -
 \partial(xa_ny)  \right)  + \sum_{k=-2K}^{2K}(\lambda_{n}^{-\frac{1}{2}}- \lambda_{n + k}^{-\frac{1}{2}})\partial(P_{\lambda_{n + k}}(xa_ny)) .
 \end{equation}
We will show that the first and the second term on the right hand side converges to $0$ as $n \to \infty$.

	We see the second term. Observe that for each $-2K \leqslant k\leqslant 2K$,
\begin{align*}
	\|\partial(P_{\lambda_{n + k}}(xa_ny))\|
	=  \|\Delta^{\frac{1}{2}}(P_{\lambda_{n + k}}(xa_ny))\|
	= \lambda_{n + k}^{\frac{1}{2}} \|P_{\lambda_{n + k}}(xa_ny)\|
	\leqslant \lambda_{n + k}^{\frac{1}{2}} \|x\|_\infty \|y\|_\infty.
\end{align*}
The subexponential growth condition then shows that for any such $k$,
\begin{align*}
	\|(\lambda_{n}^{-\frac{1}{2}}- \lambda_{n + k}^{-\frac{1}{2}})\partial(P_{\lambda_{n + k}}(xa_ny))\|
	\leqslant   \lambda_{n +k}^{\frac{1}{2}}|\lambda_{n}^{-\frac{1}{2}}- \lambda_{n + k}^{-\frac{1}{2}}|\|x\|_\infty\|y\|_\infty \to 0
\end{align*}
as $n\to \infty$. This finishes the case of the second term.

	We next see the first term. By using the Leibniz rule, our term is
 \begin{equation}\label{Eqn=RemainderTerm}
 \lambda_{n}^{-\frac{1}{2}}\left( x \partial(a_n) y - \partial(x a_n y) \right)
 =
 -\lambda_{n}^{-\frac{1}{2}}\left(   \partial(x)  a_n y + x a_n \partial(y) \right).
 \end{equation}
To estimate this term we firstly find
 \[
 \Vert \partial(x)  a_n y \Vert^2 = \Vert x  \notimes a_n y \Omega_\tau \Vert^2  \leqslant \Vert \Gamma(x, x) \Vert_\infty \Vert  a_n \Omega_\tau \Vert^2 \Vert y \Vert^2_\infty = \Vert \Gamma(x, x)  \Vert_\infty  \Vert y \Vert^2_\infty,
 \]
hence, as $n\to \infty$,
 \begin{equation}\label{Eqn=ConvergenceTwo}
 \lambda_{n}^{-\frac{1}{2}}  \Vert \partial(x)  a_n y \Vert \leqslant  \lambda_{n}^{-\frac{1}{2}} \Vert \Gamma(x, x)  \Vert^{\frac{1}{2}}_\infty  \Vert y \Vert_\infty \rightarrow 0.
 \end{equation}
 This shows that the first summand on the right hand side of  \eqref{Eqn=RemainderTerm} converges to 0. For the second summand, we first observe a couple of preliminary estimates.
Using the equation $a_ny = \sum_{k=-K}^K P_{\lambda_{n+k}}(a_ny)$ by Lemma \ref{Lem=filtered} (where $P_l=0$ if $l<0$), we have
\begin{align*}
	\Vert ( \Delta(a_n ) y   - \Delta(a_n y)) \Omega_\tau \Vert^2
	&= \left\Vert \sum_{k = - K}^K \lambda_{n } P_{\lambda_{n + k}} (a_n y) - \sum_{k = - K}^K \lambda_{n + k} P_{\lambda_{n + k}} (a_n y)   \right\Vert^2 \\
	&=  \sum_{k = - K}^K |\lambda_{n}- \lambda_{n+k}|^2\left\Vert P_{\lambda_{n + k}} (a_n y)\right\Vert^2 \\
	&\leqslant \sum_{k = - K}^K |\lambda_{n}- \lambda_{n+k}|^2\left\Vert y\right\Vert^2_\infty,
\end{align*}
so that by combining with the subexponential growth condition, as $n \rightarrow \infty$,
\begin{equation}\label{Eqn=Lim1}
	\lambda_{n}^{-1} \Vert ( \Delta(a_n ) y   - \Delta(a_n y)) \Omega_\tau \Vert \rightarrow 0.
\end{equation}
Secondly, as $\Delta$ is self adjoint,
\begin{equation}\label{Eqn=Lim2}
\lambda_{n}^{-1} \tau(  \Delta(y^\ast a_n^\ast a_n)  y) = \lambda_{n}^{-1} \tau(  y^\ast a_n^\ast a_n  \Delta(y))=
\sum_{k=0}^K \frac{\lambda_k}{\lambda_{n}} \tau(       y^\ast a_n^\ast a_n  P_{\lambda_k}(y)) \to 0,
\end{equation}
as $n\to \infty$, where we used the estimate for each summand as
\[
\begin{split}
 \frac{\lambda_k}{\lambda_{n}} |\tau(y^\ast a_n^\ast a_n P_{\lambda_k}(  y) )|
=\frac{\lambda_k}{\lambda_{n}} |\tau(     a_n P_{\lambda_k}(  y) y^\ast a_n^\ast ) |
\leqslant    \frac{\lambda_k}{\lambda_{n}} \Vert P_{\lambda_k}(  y) y^\ast \Vert_\infty,
\end{split}
\]
which converges to 0 as $n \rightarrow \infty$. Similarly,
\begin{equation}\label{Eqn=Lim3}
\lambda_{n}^{-1} \tau(  y^\ast \Delta(a_n^\ast a_n ) y )=\lambda_{n}^{-1} \tau(   a_n^\ast a_n  \Delta(y y^\ast) )
\rightarrow 0,
\end{equation}
as $n \rightarrow \infty$. Now we find that
\[
\begin{split}
& 2|\lambda_{n}^{-1} \tau( \Gamma(a_n, a_n y) y - y^\ast \Gamma(a_n, a_n) y  )| \\
\leqslant \  &
\lambda_{n}^{-1}  |\tau(  \Delta(  a_n y)^* a_n y   -  y^\ast \Delta(a_n)^* a_n y)| + \lambda_{n}^{-1}  |\tau(\Delta(y^\ast a_n^\ast a_n) y)| + \lambda_{n}^{-1} | \tau(y^\ast \Delta(a_n^\ast a_n) y )| \\
\leqslant \ &
\lambda_{n}^{-1}  \|(\Delta(a_ny )  -  \Delta(a_n)y )\Omega_\tau\| \|y\|_\infty + \lambda_{n}^{-1}  |\tau(\Delta(y^\ast a_n^\ast a_n) y)| + \lambda_{n}^{-1} | \tau(y^\ast \Delta(a_n^\ast a_n) y )|
\end{split}
\]
which converges to 0 as $n \rightarrow \infty$ by \eqref{Eqn=Lim1}, \eqref{Eqn=Lim2} and \eqref{Eqn=Lim3}.
Essentially the same estimates show that
\[
\begin{split}
& \lambda_{n}^{-1} \tau( \Gamma(a_n y, a_n y)  - y^\ast \Gamma(a_n y, a_n)   ) \rightarrow 0.
\end{split}
\]
We therefore get that, as $n\to \infty$,
\[
\begin{split}
\lambda_{n}^{-1} \Vert  a_n \partial(y) \Vert^2 = &  \lambda_{n}^{-1}\tau\left( \Gamma (a_n y, a_n y )  + y^\ast \Gamma(a_n, a_n) y - y^\ast \Gamma(a_n y, a_n) - \Gamma(a_n, a_n y) y
\right) \rightarrow 0.
\end{split}
\]
This shows that, as $n\to \infty$,
\begin{equation}\label{Eqn=ConvergenceThree}
\lambda_{n}^{-\frac{1}{2}}{\Vert x a_n \partial( y) \Vert}
\leqslant
\lambda_{n}^{-\frac{1}{2}}{ \Vert x \Vert_\infty \Vert  a_n \partial( y) \Vert} \rightarrow 0.
\end{equation} 	
In all, the convergences \eqref{Eqn=ConvergenceThree} and \eqref{Eqn=ConvergenceTwo} show that \eqref{Eqn=RemainderTerm} and hence \eqref{Eqn=VStep} goes to 0.
We conclude that $T_{x,y}(a_n) \to 0$ as $n\to \infty$ and therefore $\|T_{x,y} P_{\lambda_n}\| \to 0$ as $n\to \infty$.

	By Lemma \ref{Lem=filtered}, for any $n,m$ with $|n-m|>2K$,  $xa_n y$ and $xa_m y$ are orthogonal, so that
	$$ \langle xa_n\otimes_\nabla y, xa_m\otimes_\nabla y \rangle = \langle xa_n\otimes_\nabla y, xa_m y \otimes_\nabla 1 \rangle = \langle x\otimes_\nabla a_n y, xa_my\otimes_\nabla 1 \rangle = 0.$$
This implies $ \langle x \partial(a_n)y, x\partial(a_m)y\rangle = \langle x \partial(a_n)y, \partial(xa_my)\rangle=0$.
We obtain
	$$\langle T_{x,y}(a_n), T_{x,y} (a_m) \rangle=0, \quad \text{for all }n,m \text{ with }|n-m|>2K.$$
It turns out that $T_{x,y}^* T_{x,y} P_{\lambda_n} = \sum_{k=-2K}^{2K} P_{\lambda_{n+k}} T_{x,y}^* T_{x,y} P_{\lambda_n}$ for all $n\in \N$ (where $P_l=0$ if $l<0$).
By putting $T:=T_{x,y}^*T_{x,y}$, we see that
\begin{align*}
	T
	= \sum_{n\in \N} T P_{\lambda_n}
	= \sum_{n\in \N} \sum_{k=-2K}^{2K} P_{\lambda_{n+k}} T P_{\lambda_n}
	= \sum_{k=-2K}^{2K} \sum_{n\in \N}  P_{\lambda_{n+k}} T P_{\lambda_n},
\end{align*}
where the sum is in the strong topology. For each fixed $-2K \leqslant k \leqslant 2K $ and $m\in \N$, since $P_{\lambda_{n+k}}$ is orthogonal for different $n$, it holds that
\begin{align*}
	\left\|\sum_{m\leqslant n\in \N} P_{\lambda_{n+k}} T P_{\lambda_n}\right\|
	= \sup_{m\leqslant n\in \N}\|P_{\lambda_{n+k}} T P_{\lambda_n}\|
	\leqslant \sup_{m\leqslant n\in \N}\|T_{x,y}P_{\lambda_{n+k}}\| \|T_{x,y} P_{\lambda_n}\|,
\end{align*}
which converges to 0 as $m \to \infty$. Thus the sum $\sum_{n\in \N} P_{\lambda_{n+k}} T P_{\lambda_n}$ converges in the norm topology, hence it is a compact operator. We conclude that $T$ is compact, so $T_{x,y}$ is also compact.
\end{proof}

 We conclude as follows.

 \begin{thm}\label{Thm=AO}
 	Let $\cM$ be a finite von Neumann algebra and let $(\Phi_t)_{t \geq 0} = (\exp(-t \Delta))_{t \geq 0}$ be a quantum Markov semi-group that is gradient coarse and suppose that $\Delta$ is filtered with subexponential growth.   Assume further that $A$ as defined above is locally reflexive.  Then $\cM$ satisfies AO$^+$.
 \end{thm}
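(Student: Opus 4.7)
The plan is to apply Proposition \ref{Prop=Action} to the Cipriani--Sauvageot derivation $\partial\colon \cA \to L_2(M)_\nabla$ from Corollary \ref{Cor=Derivation}, and then read off AO$^+$ from the resulting ucp map together with the assumed local reflexivity of $A$. The four hypotheses we need to verify for Proposition \ref{Prop=Action} are: (i) $L_2(M)_\nabla$ is weakly contained in the coarse $M$-$M$ bimodule; (ii) the map $S$ of \eqref{Eqn=S} is bounded and $S^*S$ is Fredholm on $L_2(M)$; (iii) $T_{x,y}$ is compact for every $x,y \in \cA$; (iv) $A$ is locally reflexive.

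First, (i) is precisely the definition of $(\Phi_t)_{t\geq 0}$ being gradient coarse. Second, since $\Delta$ is filtered it has a compact resolvent with a complete set of eigenvectors lying inside $\cA$, so by Lemma \ref{Lem=AO} the map $S$ defined via \eqref{Eqn=S2} is an isometry on $L_2(M)$. In particular $S^*S = \Id$ is trivially Fredholm, so Assumption 2 holds, settling (ii). Third, all standing assumptions of Theorem \ref{Thm=Compact} are in place, so $T_{x,y}$ is compact for all $x,y \in \cA$, giving (iii). Finally, (iv) is part of the hypotheses of the theorem.

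With these four ingredients in hand, Proposition \ref{Prop=Action} yields a ucp map
\[
\theta\colon A \minotimes A^{\op} \longrightarrow B(L_2(M))
\]
such that $\theta(a \otimes b^{\op}) - a b^{\op}$ is compact for every $a,b \in A$. Together with local reflexivity of $A$, this is precisely condition AO$^+$ as we defined it, completing the proof. I expect no real obstacle here: the content is really in the preceding results (Proposition \ref{Prop=Action}, Lemma \ref{Lem=AO}, and Theorem \ref{Thm=Compact}), and this final statement is the assembly step that packages them together. The only minor point worth noting is to make explicit that ``gradient coarse'' in our sense automatically supplies the weak-containment hypothesis required in Proposition \ref{Prop=Action}, so that the $*$-homomorphism $x\otimes y^{\op} \mapsto xy^{\op}$ on $\cA\otimes \cA^{\op}$ extends boundedly to the minimal tensor product.
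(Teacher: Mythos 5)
Your proposal is correct and follows essentially the same route as the paper: the paper's proof likewise just cites Lemma \ref{Lem=AO} (isometry of $S$, hence Assumption 2) and Theorem \ref{Thm=Compact} (compactness of $T_{x,y}$) to verify the hypotheses of Proposition \ref{Prop=Action}, with gradient coarseness supplying weak containment and local reflexivity assumed. Your version merely spells out the assembly in more detail, which is fine.
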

 \begin{proof}
 	Lemma \ref{Lem=AO} and Theorem \ref{Thm=Compact} show that the assumptions of Proposition \ref{Prop=Action} are satisfied. Therefore this proposition together with local reflexivity of $A$ implies that $M$ satisfies AO$^+$.
 \end{proof}

 The following corollary is then a consequence of \cite{IsonoTransactions} (see also  \cite{PopaVaesCrelle}).

 \begin{cor}
 	Suppose $M$ has the W$^*$CMAP. Then under the assumptions of Theorem \ref{Thm=AO}, $M$ is strongly solid.
 \end{cor}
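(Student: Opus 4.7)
The plan is to reduce the corollary to a direct invocation of the strong solidity criterion established by Popa-Vaes \cite{PopaVaesCrelle} and sharpened by Isono \cite{IsonoTransactions}, for which AO$^+$ together with W$^*$CMAP of the ambient finite von Neumann algebra is precisely the input needed. Specifically, those works show that if a finite von Neumann algebra $M$ with separable predual has the W$^*$CMAP and contains a $\sigma$-weakly dense unital C$^*$-subalgebra witnessing condition AO$^+$ in the sense recalled at the start of Section \ref{Sect=AO}, then $M$ is strongly solid.

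First I would invoke Theorem \ref{Thm=AO}: under the hypotheses of the present corollary, namely that $(\Phi_t)_{t\geqslant 0}=(\exp(-t\Delta))_{t\geqslant 0}$ is gradient coarse with $\Delta$ filtered and of subexponential growth, and that the C$^*$-closure $A=\overline{\mathcal{A}}^{\|\cdot\|}$ is locally reflexive, we already know that $M$ satisfies condition AO$^+$ with $A$ as the distinguished dense C$^*$-subalgebra. The hypothesis of the corollary adds precisely the second ingredient, W$^*$CMAP of $M$.

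Next I would apply \cite[Theorem 1.1]{IsonoTransactions} (or the earlier \cite[Theorem 1.2]{PopaVaesCrelle}), which asserts that any finite von Neumann algebra with separable predual that is W$^*$CMAP and satisfies AO$^+$ is strongly solid. Both ingredients are now in place, so the conclusion follows immediately.

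There is no substantive obstacle here: the corollary is a packaging statement assembling the AO$^+$ output of Theorem \ref{Thm=AO} with the hypothesis of W$^*$CMAP and feeding the pair into the cited strong solidity machinery. The only point worth double-checking is the precise formulation of AO$^+$ used in \cite{IsonoTransactions}, which we have already matched in the definition given at the beginning of Section \ref{Sect=AO}, so that the invocation is verbatim.
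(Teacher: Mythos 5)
Your proposal is correct and is essentially the paper's own argument: the paper proves this corollary simply by combining the AO$^+$ conclusion of Theorem \ref{Thm=AO} with the hypothesis of W$^*$CMAP and citing the strong solidity machinery of \cite{IsonoTransactions} (see also \cite{PopaVaesCrelle}), exactly as you do. No further comment is needed.
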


	The following examples are covered by Theorem \ref{Thm=AO}.

\begin{cor}
	For $\vert q \vert \leqslant \dim(H)^{-\frac{1}{2}}$ the $q$-Gaussian algebra $\Gamma_q(\RH)$ satisfies Condition AO$^+$.
\end{cor}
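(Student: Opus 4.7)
The plan is to apply Theorem \ref{Thm=AO} to $M = \Gamma_q(\RH)$ equipped with the Ornstein--Uhlenbeck semi-group $(\Phi_t)_{t \geq 0}$, taking $\cA = \cA_q$ and $A$ its $\mathrm{C}^\ast$-closure. Three hypotheses must be verified: that $(\Phi_t)$ is gradient coarse, that its generator $\Delta$ is filtered with subexponential growth, and that $A$ is locally reflexive.

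The second and third hypotheses are essentially free. The generator is the number operator with eigenvalues $\lambda_n = n$ and eigenspaces $\cA_q(\lambda_n) = W_q(H^{\otimes n})\Omega$. The Wick product formula of Lemma \ref{Lem=WickProduct} yields $\cA_q(\lambda_n)\cA_q(\lambda_m) \subseteq \bigoplus_{k \leqslant n+m} \cA_q(\lambda_k)$ (this is the content of Remark \ref{Rmk=FilteredExample}), so $\Delta$ is filtered; and subexponential growth is immediate from $(n+1)/n \to 1$. For local reflexivity, $A$ has the CBAP for every $-1 < q < 1$ by Avsec \cite{Avsec}, and any $\mathrm{C}^\ast$-algebra with CBAP is locally reflexive.

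The gradient coarse property is the heart of the matter, and I expect the main obstacle to lie at the endpoint $|q| = \dim(\RH)^{-1/2}$. For $|q| < \dim(\RH)^{-1/2}$ Theorem \ref{Thm=QGaussianSp} gives gradient-$\cS_2$, and Corollary \ref{Cor=GradientCoarseConsequence} then delivers gradient coarse directly. At the boundary $|q| = \dim(\RH)^{-1/2}$ only \emph{immediately} gradient-$\cS_2$ is available, so one must argue in two steps: normality of the $\cA_q$-bimodule actions on $L_2(\Gamma_q(\RH))_\nabla$ is provided by Proposition \ref{Prop=NormalExtension}, while weak containment in the coarse bimodule is obtained by applying Theorem \ref{Thm=IGHS-GC} to the shifted maps $\Psi^{a,b}_t$ with $t > 0$ (which are then in $\cS_2$) and passing to the limit $t \searrow 0$, exactly parallel to the boundary device already used in the strong solidity theorem above via \cite[Proposition 4.3]{CaspersGradient}. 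Once gradient coarse is secured, Theorem \ref{Thm=AO} directly concludes AO$^+$.
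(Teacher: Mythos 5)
Your proposal follows the paper's own proof essentially verbatim: Theorem \ref{Thm=AO} is invoked with filteredness and subexponential growth supplied by Remark \ref{Rmk=FilteredExample}, gradient coarseness obtained from Theorem \ref{Thm=QGaussianSp} together with Corollary \ref{Cor=GradientCoarseConsequence} when $\vert q\vert<\dim(H)^{-1/2}$, and at the endpoint $\vert q\vert=\dim(H)^{-1/2}$ from immediate gradient-$\cS_2$ combined with Proposition \ref{Prop=NormalExtension} and the smearing/limit device of \cite[Proposition 4.3]{CaspersGradient}, which is exactly the citation the paper uses (your ``apply Theorem \ref{Thm=IGHS-GC} to $\Psi^{a,b}_t$ and let $t\searrow 0$'' is a fair sketch of that proposition, and correctly yields only weak containment). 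The sole difference is cosmetic: you make the local reflexivity hypothesis explicit via Avsec's approximation property, a point the paper's proof of this corollary leaves implicit.
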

\begin{proof}
	 In case $\vert q \vert < \dim(H)^{-\frac{1}{2}}$ we verify the conditions of  Theorem \ref{Thm=AO} as follows. From Theorem \ref{Thm=QGaussianSp} and Remark \ref{Rmk=FilteredExample} we see that a $\Gamma_q(\RH)$ admits a filtered quantum Markov semi-group that is gradient-$\cS_2$. From    Corollary \ref{Cor=GradientCoarseConsequence} this semi-group is gradient coarse and the conditions of Theorem \ref{Thm=AO} are verified. In case $\vert q \vert \leqslant  \dim(H)^{-\frac{1}{2}}$ we have that Theorem \ref{Thm=QGaussianSp} and Remark \ref{Rmk=FilteredExample} show that  $\Gamma_q(\RH)$ admits a filtered quantum Markov semi-group that is immediately gradient-$\cS_2$. Then from \cite[Proposition 4.3]{CaspersGradient} and Proposition \ref{Prop=NormalExtension} we see that the semi-group is gradient coarse. We conclude again by Theorem \ref{Thm=AO}.
\end{proof}

As mentioned before, Shlyakhtenko \cite{Shlyakhtenko} obtained the same result for $\vert q \vert < \sqrt{2} -1$ so that up to dimension 5 we find a new range. Other examples include the following.

\begin{example}
		Free group factors with the natural radial semi-group coming from the length function. Here conditon AO$^+$ is known, see \cite{NateTaka}.
\end{example}
	
	\begin{example}
		 Free orthogonal quantum groups $O_N^+$ (tracial case). In \cite{CaspersGradient} a gradient coarse quantum Markov semi-group was constructed which has the filter and subexponential growth property.  Together with local reflexivity (which follows from the CMAP of \cite{FreslonJFA} or \cite{CFY}) one obtains AO$^+$.  Here AO$^+$ was obtained already in \cite{VaesVergnioux} using  boundary actions.
\end{example}


\begin{thebibliography}{9}


\bibitem[Avs11]{Avsec}
  S. Avsec,
  \emph{Strong Solidity of the $q$-Gaussian Algebras for all $-1 < q < 1$},
  preprint, arXiv 1110.4918.


 \bibitem[BoSp91]{BozejkoSpeicher}
   M. Bozejko, R. Speicher,
   \emph{An example of a generalized Brownian motion},
    Comm. Math. Phys. {\bf 137} (1991), no. 3, 519--531.


  \bibitem[BKS97]{BKS}
M. Bozejko, B. K\"ummerer, R. Speicher,
\emph{$q$-Gaussian processes: non-commutative and classical aspects},
Comm. Math. Phys. {\bf 185} (1997), no. 1, 129--154.


\bibitem[CaEi56]{CartanEilenberg}
  H. Cartan, S. Eilenberg,
  \emph{Homological algebra},
  Princeton University Press, Princeton, N. J., 1956. xv+390 pp.

\bibitem[BrOz08]{NateTaka}
  N. Brown, N.  Ozawa,
  \emph{C$^\ast$-algebras and finite-dimensional approximations},
   Graduate Studies in Mathematics, 88. American Mathematical Society, Providence, RI, 2008. xvi+509 pp.


\bibitem[Cas18]{CaspersGradient}
M. Caspers,
\emph{Gradient forms and strong solidity of free quantum groups},
arXiv: 1802.01968.


\bibitem[CaSk15]{CaspersSkalskiCMP}
   M. Caspers, A. Skalski,
   \emph{The Haagerup approximation property for von Neumann algebras via quantum Markov semigroups and Dirichlet forms},
    Comm. Math. Phys. {\bf 336} (2015), no. 3, 1637--1664.


\bibitem[ChSi13]{ChifanSinclair}
   I. Chifan, T. Sinclair,
   \emph{On the structural theory of II$_1$ factors of negatively curved groups},
   Ann. Sci. \'Ec. Norm. Sup\'er. (4) {\bf 46} (2013), no. 1, 1--33 (2013).


\bibitem[Cip97]{Cipriani}
  F. Cipriani,
  \emph{Dirichlet forms and Markovian semigroups on standard forms of von Neumann algebras},
   J. Funct. Anal. {\bf 147}  (1997), 259–-300.


\bibitem[CFL00]{CiprianiFagnola}
   F. Cipriani, F. Fagnola, J.  Lindsay,
   \emph{Spectral analysis and Feller property for quantum Ornstein-Uhlenbeck semigroups},
    Comm. Math. Phys. {\bf 210} (2000), no. 1, 85--105.



  \bibitem[CiSa03]{CiprianiSauvageot}
F. Cipriani, J.-L. Sauvageot,
\emph{Derivations as square roots of Dirichlet forms},
J. Funct. Anal. {\bf 201} (2003), no. 1, 78--120.


  \bibitem[CiSa17]{CiprianiSauvageotAdvances}
F. Cipriani, J.-L. Sauvageot,
\emph{Amenability and subexponential spectral growth rate of Dirichlet forms on von Neumann algebras},
 Adv. Math. {\bf 322} (2017), 308--340.


\bibitem[CFY14]{CFY}
   K. de Commer, A. Freslon, M. Yamashita,
   \emph{CCAP for universal discrete quantum groups. With an appendix by Stefaan Vaes.},
   Comm. Math. Phys. {\bf 331} (2014), no. 2, 677--701.


\bibitem[CoSh05]{ConnesShlyakhtenko}
  A. Connes, D. Shlyakhtenko,
  \emph{$L^2$-homology for von Neumann algebras},
  J. Reine Angew. Math. {\bf 586} (2005), 125--168.

\bibitem[DaLi92]{DaviesLindsay}
 B. Davies, M. Lindsay,
 \emph{Noncommutative symmetric Markov semigroups},
  Math. Z. {\bf 210} (1992), no. 3, 379--411.

\bibitem[DFSW16]{DFSW}
   M. Daws, P. Fima, A. Skalski, S. White,
   \emph{The Haagerup property for locally compact quantum groups},
   J. Reine Angew. Math. {\bf 711} (2016), 189--229.

\bibitem[DeSch18]{DeyaSchott}
A.~Deya, R.~Schott,
	\emph{On multiplication in $q$-Wiener chaoses},
	Electron. Commun. Probab. {\bf 23} (2018), Paper no. 1, 16 pp.

   \bibitem[DyNi93]{NicaDykema}
    K. Dykema, A. Nica,
    \emph{On the Fock representation of the $q$-commutation relations},
     J. Reine Angew. Math. {\bf 440} (1993), 201--212.

\bibitem[EffPop03]{EffrosPopa}
  E.G. Effros, M. Popa,
  \emph{Feynman diagrams and Wick products associated with $q$-Fock space},
  Proc. Natl. Acad. Sci. USA {\bf 100} (2003), no. 15, 8629--8633.

\bibitem[Fre13]{FreslonJFA}
   A. Freslon,
   \emph{Examples of weakly amenable discrete quantum groups},
   J. Funct. Anal. {\bf 265} (2013), no. 9, 2164--2187.

\bibitem[GoLi95]{GoldsteinLindsay}
  S. Goldstein, M. Lindsay,
  \emph{KMS-symmetric Markov semigroups},
  Math. Z. 219(4), 591--608 (1995).


\bibitem[Jol04]{JolissaintMartin}
   P. Jolissaint, F. Martin,
  \emph{Alg\`ebres de von Neumann finies ayant la propri\'et\'e de Haagerup et semi-groupes L$^2$-compacts},
    Bull. Belg. Math. Soc. Simon Stevin {\bf 11} (2004), no. 1, 35--48.

\bibitem[JuMe12]{JungeMei}
  M. Junge, T. Mei,
  \emph{BMO spaces associated with semigroups of operators},
   Math. Ann. {\bf 352} (2012), no. 3, 691--743.



   \bibitem[Kas04]{Kassel}
   C. Kassel,
     \emph{Advanced School on Non-commutative Geometry},
   ICTP, Trieste, August 2004

\bibitem[Kro00]{Ilona}
I.~Kr\'{o}lak,
	\emph{Wick product for commutation relations connected with Yang-Baxter operators and new constructions of factors},
	Comm. Math. Phys. {\bf 210} (2000), no. 3, 685--701.


   \bibitem[Iso15]{IsonoTransactions}
   Y. Isono,
   \emph{Examples of factors which have no Cartan subalgebras},
   Trans. Amer. Math. Soc. {\bf 367} (2015), no. 11, 7917--7937.


\bibitem[Nou04]{Nou}
	A.~Nou,
	\emph{Non injectivity of the $q$-deformed von Neumann algebras},
	Math. Ann. {\bf 330} (2004), no. 1, 17--38.




\bibitem[OzPo10a]{OzawaPopaAnnals}
   N. Ozawa, S. Popa,
   \emph{On a class of II$_1$ factors with at most one Cartan subalgebra},
   Ann. of Math. (2) {\bf 172} (2010), no. 1, 713--749.


\bibitem[OzPo10b]{OzawaPopaAJM}
 N. Ozawa, S. Popa,
 \emph{On a class of II$_1$ factors with at most one Cartan subalgebra, II.}
  Amer. J. Math. {\bf 132} (2010), no. 3, 841--866.


\bibitem[OzPo04]{OzawaPopaPrime}
  N. Ozawa, S. Popa,
  \emph{Some prime factorization results for type II$_1$ factors},
  Invent. Math. {\bf 156} (2004), 223--234.



\bibitem[Pet09]{Peterson}
   J. Peterson,
   \emph{$L^2$-rigidity in von Neumann algebras},
    Invent. Math. {\bf 175} (2009), no. 2, 417--433.


\bibitem[Popa]{PopaIncrest}
  S. Popa,
  \emph{Notes on correspondences},
  INCREST.

  \bibitem[PoVa14]{PopaVaesCrelle}
   S. Popa, S. Vaes,
  \emph{Unique Cartan decomposition for II$_1$ factors arising from arbitrary actions of hyperbolic groups},
  J. Reine Angew. Math. {\bf 694} (2014), 215--239.


\bibitem[Ric05]{RicardCMP}
   E. Ricard,
   \emph{Factoriality of $q$-Gaussian von Neumann algebras},
   Comm. Math. Phys. {\bf 257} (2005), no. 3, 659--665.


\bibitem[Shl04]{Shlyakhtenko}
  D. Shlyakhtenko,
  \emph{Some estimates for non-microstates free entropy dimension with applications to q-semicircular families},
 Int. Math. Res. Not. {\bf 2004}, no. 51, 2757--2772.



\bibitem[Tho08]{ThomGafa}
  A. Thom,
  \emph{$L^2$-cohomology for von Neumann algebras},
  Geom. Funct. Anal. {\bf 18} (2008), no. 1, 251–270.



   \bibitem[VaVe07]{VaesVergnioux}
  S. Vaes, R. Vergnioux,
   \emph{The boundary of universal discrete quantum groups, exactness, and factoriality},
   Duke Math. J. {\bf 140} (2007), no. 1, 35--84.

   \bibitem[Voi96]{Voiculescu}
   D. Voiculescu,
   \emph{The analogues of entropy and of Fisher's information measure in free probability theory. III. The absence of Cartan subalgebras.}
   Geom. Funct. Anal. {\bf 6} (1996), no. 1, 172--199.




\end{thebibliography}
\end{document}